\renewcommand{\emptyset}{\varnothing}
\theoremstyle{plain}
\newtheorem{theorem}{Theorem}[section]
\newtheorem{theoremx}{Theorem}
\newtheorem{example}[theorem]{Example}
\newtheorem{proposition}[theorem]{Proposition}
\newtheorem{lemma}[theorem]{Lemma}
\newtheorem{corollary}[theorem]{Corollary}
\theoremstyle{definition}
\newtheorem{definition}[theorem]{Definition}
\newtheorem{remark}[theorem]{Remark}
\newcommand{\D}{{\mathbf{D}}}
\renewcommand{\H}{{\mathbf{H}}}
\renewcommand{\P}{{\mathbf{P}}}
\newcommand{\R}{{\mathbf{R}}}
\newcommand{\cC}{{\mathcal{C}}}
\newcommand{\cF}{{\mathcal{F}}}
\newcommand{\PGL}{\operatorname{PGL}}
\newcommand{\Hom}{\operatorname{Hom}}
\newcommand{\Int}{\operatorname{Int}}
\newcommand{\question}[1]{
\noindent
\framebox{\parbox{\textwidth}{{\textbf{Question:} #1}}}
}
\newcommand{\probref}[1]{\hyperref[prob:#1]{#1}}
\let\ol\overline
\title[Bijections on strictly convex sets and surfaces that preserve complete geodesics]{Bijections on strictly convex sets and closed convex projective surfaces that preserve complete geodesics}
\author{Drimik Roy Chowdhury}
\address{Department of Mathematics, University of Michigan Ann Arbor, MI 48109}
\email{drimikr@umich.edu}
\begin{document}

\begin{abstract}

In this paper, we study bijections on strictly convex sets of $\R\P^n$ for $n \geq 2$ and closed convex projective surfaces equipped with the Hilbert metric that map complete geodesics to complete geodesics as sets. Hyperbolic $n$-space with its standard metric is a special example of the spaces we consider, and it is known that these bijections in this context are precisely the isometries \cite{jeffers}. We first prove that this result generalizes to an arbitrary strictly convex set. For the surfaces setting, we prove the equivalence of mapping simple closed geodesics to simple closed geodesics and mapping closed geodesics to closed geodesics. We also outline some future directions and questions to further explore these topics.
\end{abstract}

\maketitle
\vspace{-2em}
\tableofcontents
\vspace{-3em}
\section{Introduction}
We want to study the behavior of bijections on certain metric spaces that preserve complete geodesics as sets. The general motivation for this investigation began with the results described in \cite{jeffers}, which asked if more could be said about bijective maps on each of $\R^n, \H^n$ and $\R^n \cup \{\infty\}$ if they preserved complete geodesics with respect to their standard metrics. On its own merit, the question to what extent complete geodesics characterize your space (or perhaps, more specifically, what attributes of your space may be determined) stands as an intriguing premise. For example, \cite{jeffers} proves that such a map on $\R^n$ must be affine and on $\H^n$ must be an isometry. In this sense, we see that preserving geodesics as sets imposes very rigid constraints on the function in study. 

The notion of a convex set is ubiquitous in mathematics; they are used in areas such as number theory, geometry, dynamical systems and optimization. Hilbert identified a metric one could construct on a properly convex set in $\R\P^n$ based on the cross ratio where the resulting spaces were metrically and geodesically complete. If the convex set is strictly convex, then geodesics are projective lines of $\R\P^n$ restricted to our set. In the special case for when the convex set is a ball, or more generally an ellipsoid, the Hilbert metric coincides with the Klein-Beltrami model of hyperbolic space \cite[Introduction]{handbook_hilbert_geometry}. Hence, we see that Hilbert geometry includes hyperbolic geometry. Moreover, since the cross ratio is invariant under projective transformations, we see that the Hilbert metric is a canonical attribute of a properly convex set of $\R\P^n$ i.e., independent of the affine chart in study. Hilbert geometry provides a rich mechanism to study questions from convexity theory and has deep connections to differential geometry via Finsler geometry.

Our goal is to characterize properties of bijections on strictly convex sets in $\R\P^n$ and closed convex projective surfaces with the Hilbert metric that preserve complete geodesics as sets.

We now state our main result in the strictly convex set case, which is that a bijection preserving complete geodesics must be an isometry. Thus, our paper directly generalizes the known result in the hyperbolic scenario mentioned above. The following fact is Theorem \ref{thm:main_omega} in the paper. 
\begin{theoremx}
For $n \geq 2$, let $\Omega \subset \R\P^n$ be a strictly convex set endowed with the Hilbert metric and let $f\colon \Omega \to \Omega$ be a bijection, mapping complete geodesics to complete geodesics as sets. Then $f$ is an isometry of $\Omega$ and can be extended to a projective transformation of $\R\P^n$ via an isometric embedding of $\Omega$ into $\R\P^n$.
\end{theoremx}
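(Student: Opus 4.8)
The plan is to reduce the statement to the fundamental theorem of projective geometry and then read off the isometry property directly from the cross-ratio definition of the Hilbert metric; crucially, this route uses no continuity or regularity of $f$, only the incidence structure that $f$ preserves. The starting point is the standard fact that in a strictly convex domain the Hilbert geodesics are exactly the maximal open chords $\ell\cap\Omega$ (intersections of projective lines $\ell$ with $\Omega$), and that any two distinct points of $\Omega$ lie on a unique such chord. Thus the hypothesis becomes: $f$ carries maximal chords to maximal chords. I would first upgrade this to a two-sided statement. Given a maximal chord $C$, choose distinct $u,v\in C$; the unique chord $C'$ through $f^{-1}(u),f^{-1}(v)$ has $f(C')$ a maximal chord containing $u,v$, so $f(C')=C$ by uniqueness. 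Hence $f$ permutes the complete geodesics bijectively and $f^{-1}$ is again geodesic-preserving. Consequently $x,y,z$ are collinear in $\Omega$ if and only if $f(x),f(y),f(z)$ are, i.e.\ $f$ is a collineation of the incidence geometry carried by $\Omega$.

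Next I would propagate incidence from lines to higher flats. Calling $P\cap\Omega$ an $\Omega$-$k$-flat when $P$ is a projective $k$-plane meeting $\Omega$, one shows by the usual synthetic generation of a plane section as a union of chords (and induction on dimension) that $f$ maps $\Omega$-$k$-flats to $\Omega$-$k$-flats for all $k$. Restricting to a projective plane section, the synthetic machinery of von Staudt then applies: the harmonic conjugate of a point with respect to a pair is constructed purely from joins and meets of a complete quadrangle, so a collineation automatically preserves harmonic quadruples. With this in hand, the (local) fundamental theorem of projective geometry for $n\ge 2$ yields that $f$ is the restriction to $\Omega$ of a semilinear transformation, i.e.\ a projective transformation twisted by a field automorphism $\sigma$ of $\R$. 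Since $\R$ admits only the identity automorphism --- its ordering is definable from the field operations, forcing any automorphism to be order-preserving and hence continuous and trivial --- the twist disappears and $f=T|_\Omega$ for a genuine $T\in\PGL_{n+1}(\R)$. Because $f$ is a bijection of $\Omega$ onto itself, $T(\Omega)=\Omega$, and $T$ restricts to the asserted isometric embedding; this is the extension to a projective transformation claimed in the statement.

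The isometry property is then immediate. Writing the Hilbert distance as $d_\Omega(x,y)=\tfrac12\bigl|\log[a,x,y,b]\bigr|$, where $\{a,b\}=\partial\Omega\cap\ell$ are the endpoints cut out on the chord $\ell$ through $x,y$, the projective transformation $T$ sends $\ell$ to the chord through $T(x),T(y)$ and, since $T(\Omega)=\Omega$, sends $\{a,b\}$ to $\partial\Omega\cap T(\ell)$. Projective transformations preserve the cross-ratio, so $d_\Omega(Tx,Ty)=d_\Omega(x,y)$ and $f$ is an isometry of $(\Omega,d_\Omega)$.

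I expect the main obstacle to be the extension step, namely making the fundamental theorem applicable when $f$ is defined only on a proper convex subset rather than on all of $\R\P^n$. The delicate point is that the complete quadrangles realizing von Staudt's harmonic construction, and the auxiliary meets used to generate plane sections, may a priori involve points or lines outside $\Omega$; one must verify that enough of the incidence data can be realized \emph{inside} the convex region to recover the projective scale on each chord, and that the resulting semilinear map is globally and consistently defined across overlapping flats. Strict convexity is exactly what keeps this clean, since it guarantees uniqueness of chords and hence an unambiguous collineation; in its absence geodesics would branch and the incidence reconstruction would break down.
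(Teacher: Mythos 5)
Your overall architecture matches the paper's at both ends: like the paper, you reduce everything to showing that $f$ is the restriction of some $T\in\PGL(n+1,\R)$ with $T(\Omega)=\Omega$, and you then conclude isometry from projective invariance of the cross ratio (this is exactly the content of Section \ref{subsec:interval_preserving_to_isometry}); your two-sided chord statement is the paper's Lemma \ref{lem:inverse_same_convex}, and your flats claim is Lemma \ref{lem:preserve_subspace}. The gap is in the middle step. You invoke a ``(local) fundamental theorem of projective geometry'' for collineations defined only on $\Omega$, justified by von Staudt's quadrangle construction, but that justification fails inside a bounded convex domain for precisely the reason you flag at the end and then leave unresolved: the harmonic data generated by points of $\Omega$ does not stay in $\Omega$. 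Concretely, if $a,c,b\in\Omega$ are collinear with $c$ strictly between $a$ and $b$, the fourth harmonic point $d$ (with cross ratio $(a,b;c,d)=-1$) lies outside the segment $[a,b]$, and when $c$ is the affine midpoint of $a,b$ it is the point at infinity of that chart; so the harmonic net one must iterate to ``recover the projective scale on each chord'' immediately exits $\Omega$, the harmonic-conjugate operation is not even defined within $\Omega$, and preservation of those harmonic quadruples that happen to lie in $\Omega$ does not by itself produce a field automorphism or a semilinear map. Since passing from incidence data on a proper subdomain to a globally defined projective map is the entire difficulty of the theorem, labeling this ``the main obstacle'' without closing it leaves the proof incomplete.

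There are two known ways to close the hole, and they are the two the paper points at. One is Shiffman's theorem \cite{shiffman}, which is exactly the local fundamental theorem you want: it first extends an injective line-preserving map on a connected open subset of $\R\P^n$ to all of $\R\P^n$ via Desargues-type constructions (this is where points and lines outside $\Omega$ are legitimately brought into play), and only then applies the classical fundamental theorem; had you cited this as a black box, your argument would be complete and would amount to the alternative route the paper itself acknowledges. The other is the paper's actual route, which avoids harmonic constructions entirely: it proves that $f$ preserves the cyclic order of points along every geodesic (Lemmas \ref{lem:separate_line_existence} and \ref{lem:preserve_order}, via a configuration of a line and two hyperplanes whose disjointness must be respected by injectivity), deduces that $f$ is interval preserving (Theorem \ref{thm:interval_preserving}), and then applies Theorem \ref{thm:interval_preserving_to_fractional} (\cite{avidan}, Theorem 2.17) for injective interval-preserving maps on convex sets. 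Either way, a substantial external theorem is needed exactly where your sketch currently has only an acknowledged obstacle.
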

We prove this by showing any $f\colon \Omega \to \Omega$ as in the theorem must preserve the cyclic order of points along any complete geodesic and hence, must preserve geodesic segments. Theorem 2.17 of \cite{avidan} then tells us that $f$ must be a fractional linear map, and this essentially concludes the argument. See Section \ref{sec:prelims_convex} for the precise definitions of the aforementioned terms and relevant background. Section \ref{sec:main_convex} states and proves this main result.

Following this, we visit closed (real) convex projective surfaces. Let $S$ be such a surface and $f\colon S \to S$ be a bijection mapping complete geodesics to complete geodesics as sets. Our conjecture was that $f$ must be an isometry. To explore this, we first asked what would be the consequences of this result. In particular, $f$ would be continuous and so it would map closed geodesics to closed geodesics. Our main result in the surfaces setting is that preserving closed geodesics is equivalent to preserving simple closed geodesics. The exact statement, Theorem \ref{thm:main}, is quite technical, and so we have chosen to omit it here. Section \ref{sec:prelims_surface} discusses the necessary background for our exploration on convex projective surfaces and Section \ref{sec:main} states the main result. Whether or not $f\colon S \to S$ is an isometry is still an open question.

Section \ref{sec:geodesic_laminations} discusses geodesic laminations and a context in which the main result of our exploration on surfaces holds. Lastly, Section \ref{sec:future_directions} outlines notable questions to follow from the work conducted in this paper.

\subsection*{Acknowledgements}
The author would like to thank his mentors Giusppe Martone and Ralf Spatzier for their constant support in this exploration. Moreover, the author appreciates the University of Michigan Math department for making this project possible.


\section{Preliminaries on strictly convex sets}\label{sec:prelims_convex}
First, for some standard definitions.
\begin{definition}[Convex sets]
For $n \geq 2$, we call a domain $\Omega \subset \R\P^n$ contained in an affine patch \emph{convex} if the intersection of $\Omega$ with every affine line in $\R^n$ is connected. In addition, we say a convex domain $\Omega$ is 
\begin{itemize}
\item \emph{properly convex}, if the closure $\ol{\Omega}$ is convex and contained within the complement $\R^n = \R\P^n - \R\P^{n-1}$ of some $\R\P^{n-1}$ linearly embedded in $\R\P^n$;
\item \emph{strictly convex}, if $\Omega$ is properly convex and the boundary $\partial \Omega$ contains no nontrivial line segments.
\end{itemize}
\end{definition}

\noindent{\bf Note.} Any convex set for us will be strictly convex.

\begin{definition}[Hilbert metric]
Let $\Omega$ be a properly convex set in $\R\P^n$. The \emph{Hilbert metric of $\Omega$}, $d_\Omega$, is defined as follows: for any $x,y \in \Omega$, the Hilbert distance is
$$
d_\Omega(x,y) = \log\left(\frac{|ay|}{|ax|} \frac{|bx|}{|by|}\right)
$$
where $a,b \in \partial \Omega$ are endpoints of any line passing through $x$ and $y$ and $|ay|$ denotes the usual Euclidean distance between $a$ and $y$. Then $(\Omega,d_\Omega)$ defines a \emph{Hilbert metric space}. The order of points considered is $a,x,y,b$ as seen in Figure \ref{fig:hilbert_metric}. The argument to logarithm above is known as the \emph{cross ratio} of the points $a,x,y,b$. 
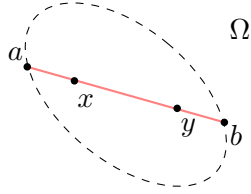
\begin{figure}[H]
\centering
\begin{tikzpicture}[scale=0.5,rotate=30]
\draw[rotate=20,dashed] (0,0) ellipse (50pt and 90pt); 
\draw[line width=0.8pt,draw=red!50] (-1.9,1.95) -- (1.9,-1.95);
\node at (-1,1)[circle,fill,inner sep=1pt]{};
\node at (1,-1)[circle,fill,inner sep=1pt]{};
\node at (1.9,-1.95)[circle,fill,inner sep=1pt]{};
\node at (-1.9,1.95)[circle,fill,inner sep=1pt]{};
\node at (3.5,0) {$\Omega$};
\node at (1,-1.6) {$y$};
\node at (-1,0.4) {$x$};
\node at (2,-2.4) {$b$};
\node at (-2,2.4) {$a$};
\end{tikzpicture}
\caption{Computing the Hilbert Metric}
\label{fig:hilbert_metric}
\end{figure}
\end{definition}
We note that if $x \neq y$ are in $\Omega$, then there exists a unique line between $x,y$ and if $x = y$, then the $d_\Omega(x,x) = 0$ with regards to any line through $x$. In other words, the Hilbert metric on $\Omega$ is a well-defined notion. Moreover, the definition of the metric structure on $\Omega$ is independent of the choice of the affine chart in $\R\P^n$ in which $\Omega$ is contained since the charts are related by a projective transformations and these maps preserve the Hilbert metric. Any bounded convex set in $\R^n$ can be given the notion of a Hilbert distance but it may not be a metric space (for example, if the set contains points of its boundary); however, the relative interior is a complete metric space for this distance \cite[Introduction]{handbook_hilbert_geometry}. Hence, we make a simple assumption to only consider open convex sets. Projective lines are always geodesics for a general Hilbert geometry \cite[\S 2.3.2]{crampon2014geodesic}, but the strictly convex assumption tells us that $(\Omega,d_\Omega)$ is uniquely geodesic \cite[Corollary 12.7]{papadopoulos2014funk}; thus, the only geodesics for the strictly convex setting are conventional straight lines of $\R^n$ restricted to the set (viewing $\Omega$ in an affine chart). Lastly, the topology induced by the Hilbert metric coincides with the subspace Euclidean topology on $\Omega$ \cite[Proposition 6.1; \S 12]{papadopoulos2014funk}.


Examples of maps that preserve complete geodesics would include isometries of the Hilbert metric. Note that elements of $\PGL(n+1,\R)$ that fix $\Omega$ as a set are isometries because the cross ratio is a projective invariant. It turns out that in the strictly convex setting, these are all the isometries of $(\Omega,d_\Omega)$ \cite[Corollary 10.4]{marquis2013around}. An example of this is $\Omega = \D^n$ the hyperbolic $n$-disk contained in $\R\P^n$ and projective isometries being hyperbolic isometries.

Our main question is whether any bijection preserving complete geodesics must be an isometry:

\question{For $n \geq 2$, let $\Omega \subset \R\P^n$ be a strictly convex set endowed with the Hilbert metric and $f\colon \Omega \to \Omega$ be a bijection, mapping complete geodesics to complete geodesics as sets. Is $f$ an isometry for the Hilbert metric?}

Theorem \ref{thm:main_omega} shows that the answer to this question is in the affirmative. Moreover, this Theorem also proves that our map in study can be viewed as a projective transformation (upto an isometric embedding of $\Omega$ into $\R\P^n$).

\begin{definition}[Interval preserving]
Let $x,y \in \R^n$. A map $f\colon D \to \R^n$ where $D \subset \R^n$ is said to be \emph{interval preserving} if for all $[x,y] \subset D$, $f([x,y]) = [z,w]$ for some $z,w \in \R^n$ where $[x,y]$ denotes the line segment between $x,y$ including endpoints.
\end{definition}

\begin{definition}[Fractional linear maps]
Let $A: \R^n \to \R^n$ be linear, $b,c \in \R^n$ and $d \in \R$. For every such prescription, we get a map $\R^n \supset D \to \R^n$ as
$$
v \mapsto \frac{1}{\langle c, v \rangle + d} \left(Av + b\right)
$$
where $D$ is one of the half-spaces determined by the complement of the set $\{v: \langle c,v \rangle + d = 0\}$. In line with reference \cite{avidan}, we call such maps \emph{fractional linear maps} and the half-space $D$ is taken to be $\{v: \langle c,v \rangle > -d \}$.
\end{definition}

The following is a known result (also proven in \cite{shiffman}), and serves as a crucial tool to prove Theorem \ref{thm:main_omega}.
\begin{theorem}[\cite{avidan}, Theorem 2.17]\label{thm:interval_preserving_to_fractional}
Let $n \geq 2$. Let $K \subset \R^n$ be a convex set with non-empty interior. Suppose $f\colon K \to \R^n$ is an injective, interval-preserving map. Then $f$ is a fractional linear map.
\end{theorem}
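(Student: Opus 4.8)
The plan is to recover the projective incidence structure encoded by the interval-preserving hypothesis and then invoke the fundamental theorem of projective geometry. The first task is to understand $f$ along a single segment. Restricting to a chord $[x,y]\subset K$, the hypothesis says $f([x,y])=[z,w]$ is a segment, and I claim injectivity forces $f$ to be a betweenness-preserving bijection of $[x,y]$ onto $[z,w]$ with $\{z,w\}=\{f(x),f(y)\}$. The key one-dimensional sublemma is that an injective map of a real interval onto an interval sending every subinterval to a subinterval is monotone: if $a<b<c$ with $f(b)$ not between $f(a)$ and $f(c)$, then writing $f([a,c])=f([a,b])\cup f([b,c])$ and noting $f([a,b])\cap f([b,c])=\{f(b)\}$ by injectivity, one finds $f(b)$ must be a shared endpoint of both image intervals, which (since both contain a genuine neighborhood of $f(b)$ on the same side) contradicts the singleton intersection. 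Hence $b\in[a,c]$ implies $f(b)\in[f(a),f(c)]$ for every collinear triple, i.e. $f$ preserves betweenness.

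Next I would upgrade betweenness to a two-way preservation of collinearity. The forward direction is immediate from the first step. For the converse, suppose $a,b,c\in K$ are \emph{not} collinear but their images are; since three distinct collinear points have one strictly between the other two, say $f(b)\in[f(a),f(c)]=f([a,c])$, there is $x\in[a,c]$ with $f(x)=f(b)$, so $x=b$ by injectivity, contradicting $b\notin[a,c]$. Thus $f$ both preserves and reflects collinearity. In particular the image of $K$ cannot lie in a hyperplane, so it affinely spans $\R^n$, and $f$ carries each chord $\ell\cap K$ into a unique line, which I denote $f(\ell)$.

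The heart of the argument is to show $f$ respects the full incidence lattice, and in particular maps the trace in $K$ of each affine $2$-plane into a $2$-plane. Fixing an interior point $p$ and two lines $\ell_1,\ell_2$ through $p$ spanning a plane $P$, their images $f(\ell_1),f(\ell_2)$ meet at $f(p)$ and span a plane $P'$. For any $q\in P\cap K$ I would draw a line through $q$ meeting both $\ell_1$ and $\ell_2$ inside $K$ — a full neighborhood of such transversals is available because $p$ is interior and $K$ is convex — and then observe that its two intersection points map into $f(\ell_1)\cup f(\ell_2)\subset P'$, so collinearity preservation places $f(q)\in P'$. Iterating one dimension at a time shows $f$ preserves flats of every dimension. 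Finally I would feed this into the fundamental theorem of projective geometry: on the interior of $K$, read through the affine chart, $f$ is a line-preserving injection between open subsets of $\R\P^n$ with $n\geq2$, hence the restriction of a collineation; since $\R$ has no nontrivial field automorphisms (equivalently, the betweenness preservation above rules out any semilinear twist), that collineation lies in $\PGL(n+1,\R)$. In the affine chart this is exactly a fractional linear map $v\mapsto(Av+b)/(\langle c,v\rangle+d)$, and betweenness preservation forces $K$ into the half-space where the denominator keeps a fixed sign, matching the stated normalization.

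The step I expect to be the main obstacle is the combination of the plane-preservation argument with the passage to a global projective map. Because $f$ is defined only on a convex set and is merely injective rather than surjective, one cannot take preimages freely, so the incidence bookkeeping — verifying that enough transversal segments genuinely remain inside $K$, and that the collineation reconstructed near each interior point is globally consistent across overlapping neighborhoods — is where the real work concentrates, rather than in the comparatively formal invocation of the fundamental theorem itself.
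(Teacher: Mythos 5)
The paper itself does not prove this statement: it imports it as Theorem 2.17 of \cite{avidan} and only sketches two known arguments, namely the simplex-covering proof of \cite{avidan} and the projective proof of \cite{shiffman} (extend the map to all of $\R\P^n$ via Desargues' theorem, then apply the fundamental theorem of projective geometry). Your plan is a reconstruction of the second route, and its genuinely executed portions are correct: the one-dimensional sublemma (two image subsegments whose union is a segment and whose intersection is a single point force the split at $f(b)$, hence betweenness), the reflection of collinearity via injectivity, and the transversal argument pushing collinearity preservation up to preservation of plane sections all go through.

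The gap is the final step. When you say that a line-preserving injection between \emph{open subsets} of $\R\P^n$ is ``hence the restriction of a collineation,'' you are not invoking the classical fundamental theorem of projective geometry --- that theorem concerns collineations defined on the whole projective space --- but rather its local version for injective collinearity-preserving maps on open connected subsets. That local statement is precisely the substantive theorem in this circle of ideas: it is what \cite{shiffman} proves (the Desargues-based extension to all of $\R\P^n$ is the whole point of that paper), and it is what the simplex-covering argument of \cite{avidan} is engineered to replace. Your closing paragraph correctly identifies this local-to-global consistency as ``where the real work concentrates,'' but the proposal leaves that work undone; as written, it reduces the theorem to an unproved (though citable) result of comparable depth rather than proving it. Two smaller elisions: the claim that the image ``cannot lie in a hyperplane, so it affinely spans $\R^n$'' does not follow immediately from collinearity reflection once $n \geq 3$ --- one genuinely needs an argument, e.g.\ that interval preservation gives $f(\mathrm{conv}(S)) \supseteq \mathrm{conv}(f(S))$ for finite $S$, so an affinely dependent image of an affinely independent set would produce a Radon partition contradicting injectivity; and since $K$ need not be open, after identifying $f$ with a projective map $T$ on $\Int K$ you must still propagate $f = T$ to points of $K \cap \partial K$, which requires a short extra argument using that $f([p,q])$ is a closed segment for $p \in \Int K$.
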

The proof in \cite{avidan} works by showing the behavior of $f$ can be easily understood on simplices and on their intersections, and hence, on a covering of $K$ with these objects. The proof in \cite{shiffman} presents the same fact though by completely working in the projective case, by extending a map on a convex set to all of $\R\P^n$ using Desargues' theorem and then applying the fundamental theorem of projective geometry.

\begin{remark}\label{rmk:fractional_linear_invertible_equivalence}
\cite{avidan} states that a fractional linear map is injective if and only if the real matrix
$
\begin{pmatrix}
A & b\\
{}^t c & d
\end{pmatrix}
$
is invertible. It is not necessary for $A$ to be invertible, e.g., consider $\begin{pmatrix} 1 & 0 & 0\\ 0 & 0 & 1\\ 0 & 1 & 0 \end{pmatrix}$.
\end{remark}

\begin{remark}\label{rmk:replace_with_cnn}
As noted in Remark 2.20 of \cite{avidan}, the set $K$ in Theorem \ref{thm:interval_preserving_to_fractional} could actually be replaced with any open, connected subset of $\R^n$.
\end{remark}


\section{Main results for bijections on strictly convex sets that preserve complete geodesics}\label{sec:main_convex}
\begin{subsection}{Notation}\label{notation_convex}
We may refer to $\Omega$ as a bounded, open, strictly convex set in $\R^n$ without any harm. Moreover, we may use the terms lines and geodesics interchangeably since geodesics with respect to the Hilbert metric are the usual restriction of lines in $\R^n$. The following serves as important notation for this section:
\begin{table}[H]
\centering
\begin{tabular}{rcl}
$[x,y]$ (resp. $(x,y)$) & $:$ & line segment between $x,y$ including (resp. excluding) endpoints \\
$\Omega$ & $:$ & strictly convex set of $\R\P^n$ for $n \geq 2$\\
$f$ & $:$ & bijection on $\Omega$, mapping complete geodesics to complete geodesics as sets
\end{tabular}
\end{table}
\end{subsection}
Our main conclusion for the convex set case is the following theorem.
\begin{theorem}\label{thm:main_omega}
For $n \geq 2$, let $\Omega \subset \R\P^n$ be a strictly convex set endowed with the Hilbert metric and let $f\colon \Omega \to \Omega$ be a bijection, mapping complete geodesics to complete geodesics as sets. Then $f$ is an isometry of $\Omega$ and can be extended to a projective transformation of $\R\P^n$ via an isometric embedding of $\Omega$ into $\R\P^n$.
\end{theorem}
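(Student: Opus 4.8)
The plan is to show that $f$ is the restriction to $\Omega$ of a projective transformation, and then read off the isometry statement from invariance of the cross ratio. Fix an affine chart so that $\Omega$ is a bounded open strictly convex subset of $\R^n$; since geodesics are chords, $f$ sends each chord of $\Omega$ onto a chord, as a set. First I would record that $f$ is a collineation in both directions: given any chord $\gamma'$ of $\Omega$, choose two points on it, pull them back by the bijection $f$, and use unique geodesics (\cite{papadopoulos2014funk}) to see that the chord through the preimages maps onto $\gamma'$; hence $f^{-1}$ also sends chords to chords, and $f$ preserves both collinearity and its negation. The same ``join'' reasoning upgrades this to flats: the image of (the trace in $\Omega$ of) a $2$-plane is again the trace of a $2$-plane, because a plane is swept out by the lines joining a fixed point to a line, and $f$ carries such a pencil to the analogous pencil through the image point. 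In particular every betweenness question for three collinear points may be restricted to a $2$-plane, so it suffices to argue in the strictly convex planar slice.

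The heart of the argument --- and the step I expect to be the main obstacle --- is to promote set-wise chord preservation to genuine \emph{order} (betweenness) preservation along each geodesic, i.e. to show that $f$ restricted to a chord $\gamma$ is monotone and therefore that $f$ is \emph{interval preserving} in the sense required by Theorem \ref{thm:interval_preserving_to_fractional}. The naive invariant available for free is ``two chords meet at an interior point of $\Omega$,'' which $f$ and $f^{-1}$ clearly preserve; but this pairwise-incidence datum alone does not determine the order of three points on a line, since the line through two same-side points can still cross a transversal chord outside their segment. To capture betweenness by incidence I would use a von Staudt--style complete-quadrangle construction of harmonic conjugates: for collinear points the harmonic conjugate is built from intersections of the sides and diagonals of an inscribed quadrangle, and by convexity of $\Omega$ every diagonal point of a quadrangle with vertices in $\Omega$ again lies in $\Omega$, so the whole construction takes place inside the domain where $f$ respects all the relevant incidences. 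Consequently $f$ preserves harmonic conjugacy on each chord; since $\R$ admits no nontrivial field automorphism, the induced self-map of the projective line carrying $\gamma$ is a genuine projectivity, and a projectivity restricting to a bijection of the open real interval $\gamma$ is monotone. This yields $f([x,y]) = [f(x),f(y)]$ for every segment, which is exactly interval preservation.

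With interval preservation in hand the argument essentially closes. Applying Theorem \ref{thm:interval_preserving_to_fractional} (with Remark \ref{rmk:replace_with_cnn}, so that the open connected set $\Omega$ may play the role of $K$) shows that $f$ is a fractional linear map, i.e. the restriction to the chart of a transformation $g \in \PGL(n+1,\R)$; this is the asserted projective extension of $f$, defined via the embedding of $\Omega$ into $\R\P^n$. Finally, because $f$ is a bijection of $\Omega$, the projective map $g$ carries $\Omega$ onto $\Omega$ and hence $\partial\Omega$ onto $\partial\Omega$; as a projective transformation it preserves the cross ratio of any four collinear points, and in particular the cross ratio of $a,x,y,b$ defining $d_\Omega(x,y)$, so $d_\Omega(f(x),f(y)) = d_\Omega(x,y)$ and $f$ is an isometry. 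That the isometries of a strictly convex Hilbert geometry are exactly the elements of $\PGL(n+1,\R)$ stabilizing $\Omega$ is Corollary 10.4 of \cite{marquis2013around}, which matches the conclusion.
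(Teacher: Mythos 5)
Your overall skeleton---chords map onto chords in both directions, upgrade this to betweenness/interval preservation, invoke Theorem \ref{thm:interval_preserving_to_fractional}, then read off the isometry statement from projective invariance of the cross ratio---is exactly the paper's architecture (Lemma \ref{lem:inverse_same_convex}, Theorem \ref{thm:interval_preserving}, and Section \ref{subsec:interval_preserving_to_isometry}). The divergence is at the step you yourself flag as the heart of the matter. The paper proves order preservation (Lemma \ref{lem:preserve_order}) by a separation argument: Lemma \ref{lem:separate_line_existence} produces a line $L_a$ through $a$ and hyperplanes $H_b, H_c$ through $b,c$ with prescribed (non)intersections, Lemma \ref{lem:preserve_subspace} shows $f$ carries these subspaces to subspaces, and a scrambled order would force $f(L_a)$ to cross $f(H_b)$ or $f(H_c)$, contradicting injectivity. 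You propose instead a von Staudt harmonic-conjugate argument, and that is where your proof has genuine gaps.

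First, the supporting claim that ``by convexity of $\Omega$ every diagonal point of a quadrangle with vertices in $\Omega$ again lies in $\Omega$'' is false: for four vertices in convex position, the intersections of the two pairs of opposite sides lie \emph{outside} the convex hull of the vertices (for a parallelogram in the affine chart they are at infinity); only the intersection of the two diagonals is inside. Relatedly, the harmonic conjugate $D$ of $C$ with respect to $(A,B)$ lies outside $[A,B]$ whenever $C \in (A,B)$, and can lie outside $\Omega$ or at infinity (take $C$ the affine midpoint), so ``$f$ preserves harmonic conjugacy on each chord'' can at best be asserted for those harmonic quadruples all four of whose points lie in the chord. This part is repairable---for such a quadruple one can choose a witnessing quadrangle flattened into a small neighborhood of the segment, so that its vertices and the two relevant diagonal points do lie in $\Omega$---but that is a different argument from the one you gave. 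Second, and more seriously, von Staudt's theorem is a statement about bijections of the whole projective line $\R\P^1$ preserving all harmonic quadruples; what you have is a bijection between two open sub-arcs preserving only those harmonic quadruples contained in the source chord. A ``local von Staudt'' theorem of this kind is a nontrivial rigidity statement that you assert rather than prove: note that the harmonic relation is symmetric under swapping $C \leftrightarrow D$, so preservation of harmonicity only tells you that exactly one of $f(C), f(D)$ lies in $(f(A),f(B))$, not which one; betweenness therefore does not follow until \emph{after} you have the projectivity, and obtaining the projectivity without the point at infinity available for the usual field-automorphism construction is precisely the hard content. As written, your proof has a hole at the one step everything turns on; the paper's hyperplane-separation argument is the device that fills it.
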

A particular corollary of this fact is that $f$ is continuous where defined. Theorem \ref{thm:main_omega} directly generalizes the result of \cite{jeffers} to any strictly convex set of $\R\P^n$, not just hyperbolic $n$-space $\H^n$. 

As a breakdown for our argument, we shall first show our function is interval preserving, which would tell us that we have a fractional linear map by Theorem \ref{thm:interval_preserving_to_fractional}. Section \ref{subsec:order_preserving} proves the necessary lemmas to show our function is \emph{order preserving along any geodesic} i.e., the cyclic order of any 3 points is maintained by our function. Section \ref{subsec:interval_preserving_continuity} utilizes the fact that our function is order preserving to conclude it is interval preserving, and we also provide an independent proof of continuity (without appealing to Theorem \ref{thm:interval_preserving_to_fractional}). We note that an argument of interval preserving implying continuity is proven in \cite{avidan}; our argument runs in mostly the same fashion. We simply include it here for the sake of cohesion. Lastly, Section \ref{subsec:interval_preserving_to_isometry} concludes our proof of Theorem \ref{thm:main_omega} using the properties and lemmas established. 

\subsection{Proving order preserving along any geodesic}\hrulefill
\label{subsec:order_preserving}

Before we proceed, we note quickly that for any bijection preserving complete geodesics on $\Omega$, its set-inverse also has the same properties. This is the content of Lemma \ref{lem:inverse_same_convex}, which really is just a consequence of the uniqueness of (complete) geodesics between any two points, and this in turn is because we are dealing with only strictly convex sets.

\begin{lemma}\label{lem:inverse_same_convex}
For $n \geq 2$, let $\Omega \subset \R\P^n$ be a strictly convex set endowed with the Hilbert metric and $f\colon \Omega \to \Omega$ be a bijection, mapping complete geodesics to complete geodesics as sets. Then $f^{-1}$ sends complete geodesics to complete geodesics as sets.
\end{lemma}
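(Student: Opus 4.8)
The plan is to exploit the fact that strict convexity makes $(\Omega, d_\Omega)$ uniquely geodesic, so that any two distinct points of $\Omega$ determine a single complete geodesic passing through them. Since $f$ is a bijection carrying complete geodesics to complete geodesics as sets, I expect to recover the geodesic-preservation of $f^{-1}$ by a pure uniqueness argument, with no appeal to continuity or to the metric structure beyond uniqueness of geodesics.

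First I would fix an arbitrary complete geodesic $\ell \subseteq \Omega$ and aim to show that $f^{-1}(\ell)$ is again a complete geodesic. Because $\ell$ contains at least two distinct points and $f$ is a bijection, the preimage $f^{-1}(\ell)$ contains at least two distinct points $p \neq q$. By strict convexity there is a unique complete geodesic $m$ through $p$ and $q$, namely the maximal straight chord of $\Omega$ cut out by the unique projective line through $p$ and $q$ (restricted to $\Omega$).

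Next I would apply the hypothesis on $f$: the image $f(m)$ is a complete geodesic, and it contains the two distinct points $f(p), f(q)$, both of which lie on $\ell$. Invoking uniqueness of the complete geodesic through $f(p)$ and $f(q)$ forces $f(m) = \ell$. Since $f$ is a bijection, applying $f^{-1}$ yields $f^{-1}(\ell) = f^{-1}(f(m)) = m$, so $f^{-1}(\ell)$ equals the complete geodesic $m$, as desired. As $\ell$ was arbitrary, $f^{-1}$ sends complete geodesics to complete geodesics.

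The argument is short, and the only point requiring care — the main (and rather minor) obstacle — is to justify that through two distinct points of a strictly convex $\Omega$ there passes exactly one complete geodesic. This is precisely where strict convexity is essential: it guarantees that $(\Omega, d_\Omega)$ is uniquely geodesic (as cited from \cite[Corollary 12.7]{papadopoulos2014funk}) and that a complete geodesic is exactly a maximal straight chord, so two distinct points cannot lie on two different complete geodesics. I would state this uniqueness explicitly before running the main argument, and remark that it is the property that could fail for a merely properly convex (non-strictly-convex) domain — whose boundary may contain line segments — which is exactly why the strict convexity hypothesis is imposed in the statement.
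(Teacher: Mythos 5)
Your proposal is correct and follows essentially the same argument as the paper: pick two distinct preimage points of points on the given geodesic, take the unique complete geodesic through them, and use the geodesic-preservation of $f$ together with unique geodesicity of the strictly convex $\Omega$ to force the image geodesic to coincide with the original one. The paper's proof is word-for-word this uniqueness argument, so no comparison is needed.
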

\begin{proof}
Let $\ell' \subset \Omega$ be a line and $y,y' \in \ell'$ be distinct points. There exists distinct unique $x,x' \in \Omega$ such that $f(x) = y, f(x') = y'$. Since any two distinct points in $\Omega$ determine a unique line, let $\ell$ be the line passing through $x,x'$. Hence, $f(\ell)$ by assumption is a line through $y,y'$ and $\Omega$ being uniquely geodesic tells us that it must be the case that $f(\ell) = \ell'$ so $f^{-1}(\ell') = \ell$.
\end{proof}

Lemma \ref{lem:separate_line_existence} constructs objects in $\Omega$ that essentially disconnect our space. As we shall see, these entities create fundamental obstacles that force our bijections in study to be order preserving.

\begin{definition}[Subspaces of strictly convex sets]
For $n \geq 2$, let $\Omega \subset \R\P^n$ be a strictly convex set endowed with the Hilbert metric. Then a \emph{$k$-dimensional subspace of $\Omega$} is a nonempty intersection of an affine $k$-dimensional subspace of $\R^n$ with $\Omega$. We may call a \emph{plane} of $\Omega$ as a $2$-dimensional subspace of $\Omega$ and a \emph{hyperplane} as an $(n-1)$-dimensional subspace of $\Omega$. 
\end{definition}

\begin{lemma}\label{lem:separate_line_existence}
For $n \geq 2$, let $\Omega \subset \R\P^n$ be a strictly convex set endowed with the Hilbert metric. Let $a,b,c$ be distinct collinear points along a line $L \subset \Omega$ with the cyclic order of points as $(a,b,c)$. Then there exists a line $L_a$ and hyperplanes $H_b,H_c$ of $\Omega$ with the following properties: 
\begin{enumerate}[(a)]
\item $a \in L_a, b \in H_b, c \in H_c$
\item $H_b$ intersects $H_c$ 
\item $L_a$ does not intersect $H_b$ or $H_c$
\end{enumerate}
Note that $(c)$ forces $L_a \neq L$ and $L \nsubseteq H_b$, $L \nsubseteq H_c$.
\end{lemma}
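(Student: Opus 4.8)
The plan is to produce the three objects explicitly after reducing to a planar picture, so that the only genuine work happens for lines in $\R^2$. Working in an affine chart where $\Omega$ is a bounded, open, convex subset of $\R^n$ (as in the Notation), I would fix an affine $2$-plane $P \subset \R^n$ containing $L$ and put $\Omega_P := \Omega \cap P$, a bounded open convex subset of $P$ still containing $a,b,c$. It then suffices to build three lines $\ell_a \ni a$, $\ell_b \ni b$, $\ell_c \ni c$ inside $P$ with $\ell_b \cap \ell_c$ meeting $\Omega_P$ and with $\ell_a$ disjoint inside $\Omega_P$ from both $\ell_b$ and $\ell_c$, and then to thicken them to hyperplanes. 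Concretely, choosing an $(n-2)$-dimensional linear subspace $W$ complementary to the direction plane $\vec P$, I set $\hat H_b := \ell_b + W$ and $\hat H_c := \ell_c + W$; since $W \cap \vec P = 0$, these are affine hyperplanes of $\R^n$ meeting $P$ in exactly $\ell_b$ and $\ell_c$. With $L_a := \ell_a$, $H_b := \hat H_b \cap \Omega$, and $H_c := \hat H_c \cap \Omega$, every relevant intersection collapses into the plane — namely $L_a \cap H_b = (\ell_a \cap \ell_b)\cap\Omega$, $L_a \cap H_c = (\ell_a\cap\ell_c)\cap\Omega$, and $\ell_b\cap\ell_c \subseteq H_b\cap H_c$ — so (a)--(c) follow from the planar statement.

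For the planar construction I would place coordinates on $P$ with $L$ the first axis and $a=(0,0)$, $b=(\beta,0)$, $c=(\gamma,0)$, where $0<\beta<\gamma$ (the cyclic order $(a,b,c)$ puts $b$ between $a$ and $c$). The device that defuses the competition between property (b) and property (c) is to make $\ell_a$ and $\ell_b$ \emph{parallel}: fix any direction $d$ transverse to $L$ and set $\ell_a := a + \R d$ and $\ell_b := b + \R d$. These are distinct parallel lines, so $\ell_a \cap \ell_b = \emptyset$ outright and the avoidance of $H_b$ by $L_a$ is free. I would then take $\ell_c$ to be the line through $c$ and a point $q := b + s d$ on the chord $\ell_b \cap \Omega_P$, with the parameter $s$ still to be chosen; then $q \in \ell_b \cap \ell_c \cap \Omega_P$ gives property (b), and only the avoidance of $H_c$ by $L_a$ remains.

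The crux is to pin down $s$ so that $\ell_c$ meets $\ell_a$ outside $\Omega_P$. A direct computation (using the collinearity of $a,b,c$) shows that, parametrizing $\ell_a$ as $a+td$, the intersection $\ell_c \cap \ell_a$ occurs at $t = \kappa s$, where $\kappa := \gamma/(\gamma-\beta) > 1$ and, crucially, this factor is independent of $d$. Writing the chord $\ell_a \cap \Omega_P$ as $\{a+td : t \in (t^-,t^+)\}$ with upper exit point $A^+ := a + t^+ d \in \partial\Omega_P$, I need an admissible $q$ with $\kappa s > t^+$, i.e.\ I need the single point $b + (t^+/\kappa)\,d$ to lie in $\Omega_P$. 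Here the arithmetic aligns perfectly: since $b = \lambda c + (1-\lambda)a$ with $\lambda = \beta/\gamma$ and $1-\lambda = 1/\kappa$, this point equals $\lambda c + (1-\lambda)A^+$, a convex combination of the interior point $c$ and the boundary point $A^+$ with weight $1-\lambda \in (0,1)$ on the boundary. By convexity and openness of $\Omega_P$ the half-open segment $[c,A^+)$ lies in the interior, so $b + (t^+/\kappa)d \in \Omega_P$; hence some $s$ just below the chord's endpoint gives $\kappa s > t^+$, placing $\ell_c \cap \ell_a$ outside $\Omega_P$ and establishing (c). I expect this recognition — that the factor $\kappa>1$ is exactly what rescales the exit parameter $t^+$ of the $a$-chord into an interior-to-boundary combination along the $b$-chord — to be the one genuinely delicate point; once it is in place, the construction is uniform in $d$ and needs no case analysis on the shape of $\Omega$.
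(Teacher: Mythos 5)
Your proposal is correct, and its global skeleton matches the paper's: both reduce to a plane $P \supset L$, build three lines there, and then thicken two of them to hyperplanes of $\Omega$ by adding an $(n-2)$-dimensional complement of $\vec P$ (your verification that all relevant intersections collapse back into $P$ is slightly more explicit than the paper's, which is welcome). Where you genuinely diverge is in the planar core. The paper picks a boundary point $x \in \partial\Omega$ off $\ol{L}$, takes $L_a$ and $L_c$ to be the chords through $x$ from $a$ and $c$, and $L_b$ the chord through $b$ and the opposite endpoint $y$ of $L_a$; the unwanted intersections then occur exactly \emph{at} the boundary points $x$ and $y$ and are discarded because $\Omega$ is open, while the crossing of $L_b$ and $L_c$ follows from a soft separation-by-$L$ convexity argument. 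You instead make $\ell_a$ and $\ell_b$ parallel (so that intersection is pushed off to infinity and costs nothing), get property (b) for free by routing $\ell_c$ through a point $q$ chosen inside the $b$-chord, and then verify (c) quantitatively: the crossing of $\ell_c$ with $\ell_a$ sits at parameter $\kappa s$ with $\kappa = \gamma/(\gamma-\beta) > 1$ independent of the transverse direction, and the identity $b + (t^+/\kappa)d = \lambda c + (1-\lambda)A^+$ together with the standard fact that the half-open segment from an interior point to a boundary point stays interior lets you push that crossing strictly beyond the chord. The computation checks out. The trade-off: the paper's argument is shorter and picture-driven but leans on openness to kill boundary intersections and on a separation argument for (b); yours is longer but uniform in $n$ (no $n=2$ base case), makes (b) true by construction, and places the bad intersections outside $\ol{\Omega}$ or at infinity rather than on $\partial\Omega$, which is marginally more robust. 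Both arguments use only convexity, openness, and boundedness in a chart — neither needs strict convexity.
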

\begin{proof}
We first prove the case for $n = 2$. Hence, $H_b, H_c$ are just lines and we shall denote them $L_b, L_c$, respectively. Figure \ref{fig:La_Hb_Hc} provides a picture for the forthcoming discussion. Let $L \subset \Omega$ be a line and points $a,b,c$ as outlined in the assumption. Pick a point $x \in \partial \Omega$ on one side of $\Omega$ determined by $L$ (so $x \not\in \partial L \cap \partial \Omega$) and draw $L_a$ as the line between $a$ and $x$. Similarly, we have $L_c$ between $c$ and $x$, and let $z$ denote the other endpoint of this line on $\partial \Omega$. Then $L_a$ hits $\partial \Omega$ on the opposite side of $L$ at a point $y$. Let $L_b$ be the line between $b$ and $y$. Note that all these lines live inside $\Omega$ by convexity. We claim that these lines satisfy our criteria.
\begin{figure}[H]
\centering
\begin{tikzpicture}[scale=0.6]
\draw[rotate=20,dashed] (0,0) ellipse (130pt and 90pt); 
\draw[line width=0.8pt,draw=blue!50] (-3,-3) -- (3,3);
\draw[line width=0.8pt,draw=red!50] (-2,-3.3) -- (-1,3);
\draw[line width=0.8pt,draw=red!50] (-2,-3.3) -- (2,3.3);
\draw[line width=0.8pt,draw=red!50] (4.4,1) -- (-1,3);
\node at (-2, -1.5) {$a$};
\node at (-1.75,-1.8)[circle,fill,inner sep=1.5pt] {};
\node at (0.5, 0) {$b$};
\node at (0,0)[circle,fill,inner sep=1.5pt] {};
\node at (2,1.5) {$c$};
\node at (1.95,1.9)[circle,fill,inner sep=1.5pt] {};
\node at (-1,3.4) {$x$};
\node at (-2,-3.8) {$y$};
\node at (4.7,1.2) {$z$};
\node[text=red] at (4,0.8) {$L_c$};
\node[text=red] at (-1,-2.8) {$L_b$};
\node[text=red] at (-1.5,2.5) {$L_a$};
\node[text=blue] at (3,2.5) {$L$};
\node at (4,3.3) {$\Omega$};
\end{tikzpicture}
\caption{Construction of lines $L_a,L_b,L_c \subset \Omega$ such that $L_a \cap L_b = L_a \cap L_c = \emptyset$ and $L_b \cap L_c \neq \emptyset$}
\label{fig:La_Hb_Hc}
\end{figure}
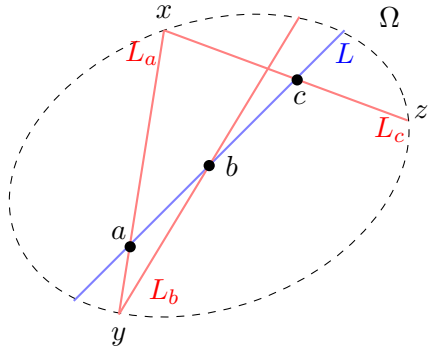
It is clear that $L_a$ does not intersect $L_b$ or $L_c$ since $\Omega$ is open. Since $b$ lies in the interval $(a,c)$, $L_b$ has its other endpoint on the component of $\ol{\Omega} \setminus \ol{L}$ not containing $y$. Since $\Omega$ is convex, we must hence have $L_b$ intersect $L_c$.
 For the general case $n > 2$, embed $L$ into a plane $P$ of $\Omega$ i.e., intersect $\Omega$ with a plane of $\R^2$ containing $L$ and call the intersection $P$. Then $P$ is strictly convex as it is the intersection of two convex sets and the boundary of $P$ is a subset of the boundary of $\Omega$. Hence, our case above gives us lines $L_a,L_b,L_c \subset P \subset \Omega$. The complement of the plane $P$ at $b$ gives us $n-2$ linearly independent directions. Hence, there exists hyperplanes $H_b,H_c$ of $\Omega$ such that $H_b \cap P = L_b$ and $H_c \cap P = L_c$. These hyperplanes with $L_a$ conclude our proof.
\end{proof}


\begin{lemma}\label{lem:preserve_subspace}
For $n \geq 2$, let $\Omega \subset \R\P^n$ be a strictly convex set endowed with the Hilbert metric and $f\colon \Omega \to \Omega$ be a bijection, mapping complete geodesics to complete geodesics as sets. Then $f$ takes $r$-dimensional subspaces to $r$-dimensional subspaces for $1 \leq r < n$. 
\end{lemma}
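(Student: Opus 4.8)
The plan is to induct on $r$, getting the base case $r=1$ for free and reducing the inductive step to a single convexity fact about how a subset of $\Omega$ that is closed under passing to complete geodesics can fill out an affine slice. Throughout it is convenient to phrase the statement for an \emph{arbitrary} geodesic-preserving bijection $g\colon\Omega\to\Omega$, since by Lemma \ref{lem:inverse_same_convex} the inverse is again such a map; so I would prove simultaneously that every such $g$ carries $r$-subspaces \emph{bijectively} onto $r$-subspaces. The base case $r=1$ is exactly the hypothesis together with Lemma \ref{lem:inverse_same_convex}: the $1$-dimensional subspaces are precisely the complete geodesics, $g$ sends them to complete geodesics, and $g^{-1}$ does too, so the correspondence is a bijection.

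For the inductive step, assume the claim for all dimensions below $r$ and let $S=W\cap\Omega$ be an $r$-subspace, $W$ an affine $r$-plane. Choose an $(r-1)$-plane $W_0\subset W$ meeting $\Omega$, set $T:=W_0\cap\Omega$ (an $(r-1)$-subspace of $S$), and pick $p\in S\setminus W_0$. By induction $g(T)=W_0'\cap\Omega$ is an $(r-1)$-subspace with $\mathrm{aff}(g(T))=W_0'$ of dimension $r-1$; since $g$ is injective and $p\notin T$, we get $g(p)\in\Omega\setminus g(T)$, hence $g(p)\notin W_0'$, and $\Pi:=\mathrm{aff}(W_0'\cup\{g(p)\})$ is an $r$-plane. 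I first claim $g(S)\subseteq\Pi$. Set $A:=\{q\in S: g(q)\in\Pi\}$. If $q_1,q_2\in A$, the complete geodesic through $g(q_1),g(q_2)$ runs along a projective line meeting $\Pi$ in two points, hence lies inside $\Pi$; as $g$ carries the complete geodesic through $q_1,q_2$ onto it, that geodesic maps into $\Pi$, so $A$ is closed under passing to complete geodesics. Moreover $A\supseteq T\cup\{p\}$, and $T\cup\{p\}$ affinely spans $W$, so the geometric fact below forces $A=W\cap\Omega=S$, i.e. $g(S)\subseteq\Pi$. Then $\mathrm{aff}(g(S))\subseteq\Pi$, while $g(S)\supseteq g(T)\cup\{g(p)\}$ already spans $\Pi$, so $\mathrm{aff}(g(S))=\Pi$ and $g(S)\subseteq\Pi\cap\Omega$. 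To upgrade this to equality I apply the same containment to $g^{-1}$ and the $r$-subspace $\Pi\cap\Omega$, obtaining an $r$-plane $W'$ with $g^{-1}(\Pi\cap\Omega)\subseteq W'\cap\Omega$. Then $S=g^{-1}(g(S))\subseteq g^{-1}(\Pi\cap\Omega)\subseteq W'\cap\Omega$; comparing affine hulls of the $r$-subspaces $S$ and $W'\cap\Omega$ gives $W=W'$, so the chain collapses to $g^{-1}(\Pi\cap\Omega)=S$, whence $g(S)=\Pi\cap\Omega$ is an $r$-subspace.

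The heart of the matter is the geometric fact just invoked, which I would isolate as a lemma: \emph{if $B\subseteq\Omega$ is closed under passing to complete geodesics (whenever $x,y\in B$, the whole chord $\overline{xy}\cap\Omega$ lies in $B$) and $B$ contains a set spanning an affine $m$-plane $W$, then $B\supseteq W\cap\Omega$}. I would prove this by induction on $m$, the cases $m\le 1$ being immediate. For the step, take affinely independent points of $B$ spanning $W$, let $W_0$ be the $(m-1)$-plane spanned by all but one of them, call the last one $v\notin W_0$, and apply the inductive hypothesis inside the strictly convex slice $\Omega\cap W_0$ to get $W_0\cap\Omega\subseteq B$. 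Any point of $W\cap\Omega$ strictly on the opposite side of $W_0$ from $v$ is then reached, since the segment from $v$ to it crosses $W_0$ inside the open set $\Omega$, placing it on a chord joining $v\in B$ to a point of $W_0\cap\Omega\subseteq B$. Finally a point $q$ on the same side as $v$ is recovered in a second step: a chord from $q$ through a point $t_0\in W_0\cap\Omega$ continues transversally into the opposite half-space, yielding a point $q^-\in B$ such that $q$ lies on the chord joining $q^-$ and $t_0$, both in $B$; hence $q\in B$.

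The main obstacle is precisely this spanning lemma, and within it the two-sided coning used for points on the same side of $W_0$ as the apex $v$: a single cone from one vertex over $W_0\cap\Omega$ does \emph{not} cover all of $W\cap\Omega$ (this is exactly the affine ``parallel direction'' with no projective counterpart), so one must re-absorb the uncovered half-space by invoking geodesic-closure of $B$ a second time. Everything else is routine dimension bookkeeping together with injectivity of $g$. I would stress that nowhere is continuity of $f$ needed (it is established only later): all the closure properties used are purely set-theoretic consequences of preserving complete geodesics.
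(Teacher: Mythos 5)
Your proof is correct, and while it shares the paper's overall skeleton---induction on the dimension $r$, run simultaneously for all geodesic-preserving bijections so that Lemma \ref{lem:inverse_same_convex} lets you apply the inductive hypothesis (and the containment step) to $g^{-1}$ and collapse the inclusions to an equality---the key mechanism is genuinely different. The paper fixes a point $p$ and $r+1$ lines through it spanning an $(r+1)$-dimensional subspace $H$, shows their images remain linearly independent at $f(p)$ (inductive hypothesis applied to $f^{-1}$), and then rules out extra dimensions by pulling lines through $f(p)$ back into $H$. You instead isolate a purely convex-geometric spanning lemma: a chord-closed subset of $\Omega$ containing an affine spanning set of an $m$-plane $W$ must contain all of $W\cap\Omega$, proved by two-sided coning over an $(m-1)$-slice; your attention to points on the same side of $W_0$ as the apex $v$ is exactly where a naive single cone fails, and the second application of chord-closure handles it correctly. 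This lemma traps $g(S)$ inside the single $r$-plane $\Pi$ spanned by $g(T)$ and $g(p)$, and the rest is bookkeeping with affine hulls. Notably, your spanning lemma also substantiates a step the paper passes over quickly: the paper asserts that $f(H)$, by virtue of containing $r+1$ independent lines through $f(p)$, ``contains a subspace of dimension at least $r+1$,'' and that filling claim is precisely your lemma applied to the chord-closed set $f(H)$. So your route is slightly longer but more self-contained at that point, while the paper's is more economical in working only with lines through a single point.
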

\begin{proof}
We prove the claim by induction. The $r=1$ case are exactly the lines of $\Omega$ and by assumption $f$ takes lines to lines. Assume the case for $r$-dimensional subspaces. Let $L_1,\ldots,L_{r+1}$ be lines in $\Omega$ that are linearly independent at $p \in \Omega$ i.e., the lines determine an $r+1$-dimensional subspace $H$ intersecting at $p \in \Omega$. We first show that $f(L_1),\ldots,f(L_{r+1})$ are linearly independent at $f(p)$ in $\Omega$. If this was not the case, then for some $i$, we would have that $f(L_i)$ belongs to the span of $\{f(L_j)\}_{j \neq i}$. But this latter space is $r$-dimensional and so by the inductive hypothesis applied to $f^{-1}$ (a bijection taking lines to lines), we must have that $L_i$ belong to the span of $\{L_j\}_{j \neq i}$ -- contradiction. This tells us that $f(H)$ contains a subspace of dimension at least $r+1$. Assume first that $f(H)$ is this subspace and so $\dim f(H) \geq \dim H = r+1$. 

If $\dim f(H) > \dim H$ i.e., $f(H)$ contains at least $r+2$ linearly independent directions, then we could find a line $\ell \subset f(H)$ passing through $f(p)$ and linearly independent to $f(L_1),\ldots,f(L_{r+1})$; this is because with at least $r+2$ linearly independent directions, there exists some line $P$ not contained in the span of $f(L_1),\ldots,f(L_{r+1})$. Hence, $\ell$ can be a line between $f(p)$ and a point on $P$ not in the span of $f(L_1),\ldots,f(L_{r+1})$. With this note, it suffices to prove that any line passing through $f(p)$ must be in the span of $f(L_1),\ldots,f(L_{r+1})$. Let $K'$ be any line passing through $f(p)$ in $f(H)$. Applying $f^{-1}$ to $K'$, we get a line $K \subset H$ that passes through $p$ and hence belongs to the span of $L_1,\ldots,L_{r+1}$. Since any line is determined by two points, it is enough to find two points of $K$ that are contained in the span of $L_1,\ldots,L_{r+1}$. We already have $p$. Let $q \in K$, $q \neq p$ and $r \in L_1$, $r \neq p$. Then convexity forces the (unique) line passing through $q,r$ to be contained in $H$; this provides the second point. Hence, $f(K) = K'$ lies in the span of $f(L_1),\ldots,f(L_{r+1})$.


We now complete the proof without the assumption that $f(H)$ is a subspace. We established that $f(H)$ contains a subspace of dimension at least $r+1$. Let $H' \subset f(H)$ denote this subspace. Then the above argument runs through to conclude $\dim H' = \dim H = r+1$. Now applying $f^{-1}$, we get $f^{-1}(H') \subset H$. Since $f^{-1}$ is a map of the same properties as $f$, we can apply our deductions from above again to conclude $f^{-1}(H')$ contains a subspace of dimension at least $r+1$. This collapses our inequalities to conclude $f^{-1}(H') = H \Rightarrow f(H) = H'$ as the only $r+1$-dimensional subspace of $H$ is $H$ itself.
\end{proof}

\begin{lemma}\label{lem:preserve_order}
For $n \geq 2$, let $\Omega \subset \R\P^n$ be a strictly convex set endowed with the Hilbert metric and $f\colon \Omega \to \Omega$ be a bijection, mapping complete geodesics to complete geodesics as sets. Then $f$ preserves the order of points along any line in $\Omega$.
\end{lemma}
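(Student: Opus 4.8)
The plan is to reduce the statement to the planar case $n=2$ and there extract a contradiction from the separating configuration of Lemma \ref{lem:separate_line_existence}. For the reduction, fix collinear points $a,b,c$ on a line $L\subset\Omega$ with $b$ between $a$ and $c$, choose a plane $P$ of $\Omega$ containing $L$, and restrict $f$. By Lemma \ref{lem:preserve_subspace} the image is again a plane of $\Omega$; write $P'=f(P)$. Then $f|_P\colon P\to P'$ is a bijection between two $2$-dimensional strictly convex sets. Moreover a complete geodesic of $P$ is a complete geodesic of $\Omega$ lying in $P$, and its image under $f$ is a complete geodesic of $\Omega$ contained in $f(P)=P'$, hence a complete geodesic of $P'$. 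So $f|_P$ again carries complete geodesics to complete geodesics, and since betweenness of $a,b,c$ is intrinsic to $L\subset P$, it suffices to prove the lemma for such maps between planar strictly convex sets.

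Assume now $n=2$ and let $g\colon\omega\to\omega'$ be a geodesic-preserving bijection of planar strictly convex sets; write $a'=g(a)$, $b'=g(b)$, $c'=g(c)$. These are collinear on the line $g(L)$, so exactly one of them lies between the other two, and I argue by contradiction that it must be $b'$. Suppose instead (after possibly interchanging the roles of $a$ and $c$) that $a'$ lies between $b'$ and $c'$. Apply Lemma \ref{lem:separate_line_existence} in $\omega$ to the triple $(a,b,c)$: since in the plane the hyperplanes are lines, this yields lines $L_a\ni a$, $L_b\ni b$, $L_c\ni c$ with $L_a\cap L_b=L_a\cap L_c=\emptyset$ and $L_b\cap L_c\neq\emptyset$. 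Transport this by $g$: because $g$ sends lines to lines and is a bijection, $g(L_a),g(L_b),g(L_c)$ are lines through $a',b',c'$ with $g(L_a)\cap g(L_b)=g(L_a)\cap g(L_c)=\emptyset$ and $g(L_b)\cap g(L_c)\neq\emptyset$. Thus $g(L_a)$ is a line through the middle point $a'$ that misses the two lines $g(L_b),g(L_c)$ through the endpoints $b',c'$, while these two lines meet inside $\omega'$.

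The contradiction comes from a planar incidence fact, which I expect to be the heart of the argument: \emph{in a strictly convex set, if $A$ lies between $B$ and $C$ and a line $M$ through $A$ misses lines $\beta\ni B$ and $\gamma\ni C$, then $\beta$ and $\gamma$ are disjoint inside the set.} Indeed, if $z\in\beta\cap\gamma$ existed inside the set, then by convexity the whole triangle with vertices $B,z,C$ would lie in the set; the point $A$ sits in the interior of the base $[B,C]$, and any line $M$ through $A$ other than the line $BC$ enters this triangle and must leave it through one of the sides $[B,z]\subset\beta$ or $[C,z]\subset\gamma$, producing a point of $M\cap\beta$ or $M\cap\gamma$ inside the set --- contradicting that $M$ misses both. (The case $M=BC$ is excluded, since then $B\in\beta$ would lie on $M$.) Applying this with $A=a'$, $B=b'$, $C=c'$, $M=g(L_a)$, $\beta=g(L_b)$, $\gamma=g(L_c)$ forces $g(L_b)\cap g(L_c)=\emptyset$, contradicting the preserved intersection. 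Hence $b'$ is the middle point, and order is preserved.

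I expect the main obstacle to be precisely this planar incidence lemma, together with the decision to pass to a plane in the first place. The tempting direct argument --- apply Lemma \ref{lem:separate_line_existence} with its hyperplanes $H_b,H_c$ and argue in $\R^n$ --- appears to break down, because the analogous $n$-dimensional statement is false for $n\geq 3$: taking $\Omega$ a ball and $A$ the midpoint of a diameter with endpoints $B,C$, two hyperplanes through $B$ and $C$ can meet in an interior line and yet be simultaneously missed by a line through $A$ pointing in a transverse direction. This is exactly why restricting to a plane via Lemma \ref{lem:preserve_subspace} is needed: inside a $2$-dimensional slice there is no transverse direction for the avoiding line to escape into, and the triangle argument closes the gap. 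The remaining points --- that $f|_P$ inherits the geodesic-preserving bijection property and that the reduction respects betweenness --- are routine.
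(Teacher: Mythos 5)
Your proposal is correct, and its planar core is the same as the paper's: apply Lemma \ref{lem:separate_line_existence} in the domain, transport the intersection pattern through the bijection, and derive a contradiction from the incidence geometry of the image configuration (your triangle argument supplies the justification that the paper's two-dimensional case asserts without proof). The genuine difference is the case $n>2$. The paper keeps the hyperplanes $H_b,H_c$ and argues directly in $\R^n$: after noting via Lemma \ref{lem:preserve_subspace} that $f(L_a)$, $f(H_b)$, $f(H_c)$ form a line and two intersecting hyperplanes through $f(a)$, $f(b)$, $f(c)$, it asserts that if $f(a)$ lay in $(f(b),f(c))$ then ``by convexity'' $f(L_a)$ would have to meet one of the two hyperplanes. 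You instead restrict $f$ to a plane $P\supset L$, use Lemma \ref{lem:preserve_subspace} to see that $f(P)$ is a plane, and run the two-dimensional argument inside $f(P)$.

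Your reason for departing is sound: the statement the paper's $n>2$ step relies on is false in dimension at least $3$, exactly as you claim. For an explicit version of your counterexample with all points interior (your $B,C$ should be taken inside $\Omega$, not as endpoints of a diameter on $\partial\Omega$), let $\Omega\subset\R^3$ be the open unit ball, $B=(-1/2,0,0)$, $C=(1/2,0,0)$, $A=(0,0,0)\in(B,C)$, and set $H_B=\{z=x+1/2\}\cap\Omega$ and $H_C=\{z=-x+1/2\}\cap\Omega$. These hyperplanes contain $B$ and $C$ respectively and meet along $\{x=0,\ z=1/2\}\cap\Omega\neq\emptyset$, yet the $y$-axis is a line through $A$ disjoint from both. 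So the incidence pattern that the paper declares impossible does occur in dimension $\geq 3$, and the paper's proof of the general case has a gap at exactly this point. The gap is repaired precisely by your reduction: in the construction of Lemma \ref{lem:separate_line_existence} one has $H_b\cap P=L_b$ and $H_c\cap P=L_c$, so bijectivity and preservation of planes give $f(H_b)\cap f(P)=f(L_b)$ and $f(H_c)\cap f(P)=f(L_c)$, two lines of the plane $f(P)$ that still intersect (their common point lies in $P$); this restores the planar configuration that your triangle lemma rules out, whereas the paper's passage to hyperplanes discards it. In short, your proof is not only correct; it is the argument the paper's general case actually needs.
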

\begin{proof}
First, we prove the case for $\Omega \subset \R\P^2$ being strictly convex. We proceed by contradiction. Let $L$ denote a line with distinct points $a,b,c$ with order $(a,b,c)$ such that $f$ interchanges the order of $a,b$ e.g., as depicted in Figure \ref{fig:preserve_order}. Create lines $L_a, L_b, L_c$ that pass through $a,b,c$, respectively, with the prescribed properties in Lemma \ref{lem:separate_line_existence}. By assumption $f(a)$ lies in $(f(b),f(c))$ and so $f(L_a)$ must pass through the interval $(f(b), f(c))$. Thus, the image of $L_a$ must intersect either $f(L_b)$ or $f(L_c)$ but this is a contradiction to the injectivity of $f$. 


\begin{figure}[H]
\centering
\begin{tikzpicture}
\begin{scope}[scale=0.6]
\draw[rotate=20,dashed] (0,0) ellipse (130pt and 90pt); 
\draw[line width=0.8pt,draw=blue!50] (-3,-3) -- (3,3);
\draw[line width=0.8pt,draw=red!50] (-2,-3.3) -- (-1,3);
\draw[line width=0.8pt,draw=red!50] (-2,-3.3) -- (2,3.3);
\draw[line width=0.8pt,draw=red!50] (4.4,1) -- (-1,3);
\node at (-2, -1.5) {$a$};
\node at (-1.75,-1.8)[circle,fill,inner sep=1.5pt] {};
\node at (0.5, 0) {$b$};
\node at (0,0)[circle,fill,inner sep=1.5pt] {};
\node at (2,1.5) {$c$};
\node at (1.95,1.9)[circle,fill,inner sep=1.5pt] {};
\node[text=red] at (4,0.8) {$L_c$};
\node[text=red] at (-1,-2.8) {$L_b$};
\node[text=red] at (-1.5,2.5) {$L_a$};
\node[text=blue] at (3,2.5) {$L$};
\node at (4,3.3) {$\Omega$};
\end{scope}
\begin{scope}[xshift=2cm,yshift=-1.5cm]
\draw[line width=0.8pt, ->] (1,1.8) -- (2.5,1.8);
\node at (1.75, 2.2) {$f$};
\end{scope}
\begin{scope}[scale=0.6,xshift=12.25cm]
\draw[rotate=20,dashed] (0,0) ellipse (130pt and 90pt); 
\draw[line width=0.8pt,draw=blue!50] (-4,0.8) -- (4,-0.8);
\draw[line width=0.8pt,draw=red!50] (-2,-3.3) -- (-1,3);
\draw[line width=0.8pt,draw=red!50] (-3,-3) -- (4.25,1.75);
\node at (0.5, 1.4) {$f(a)$};
\path[rotate=180,draw, decoration = {calligraphic brace, mirror}, decorate] (-1.325,0.1) -- (1.25,-0.4);
\node[rotate=-110] at (0.4,0.75) {$\in$};
\node at (-2.5, -0.25) {$f(b)$};
\node at (-1.4,0.3)[circle,fill,inner sep=1.5pt] {};
\node at (1.3,-1) {$f(c)$};
\node at (1.25,-0.25)[circle,fill,inner sep=1.5pt] {};
\node[text=blue] at (-2.6,1.25) {$f(L)$};
\node[text=red] at (0,2.75) {$f(L_b)$};
\node[text=red] at (3,2) {$f(L_c)$};
\node at (4,3.3) {$\Omega$};
\end{scope}
\end{tikzpicture}
\caption{Bijectivity of $f$ and Lemma \ref{lem:separate_line_existence} guarantee that $f$ is order preserving on $\Omega$}
\label{fig:preserve_order}
\end{figure}
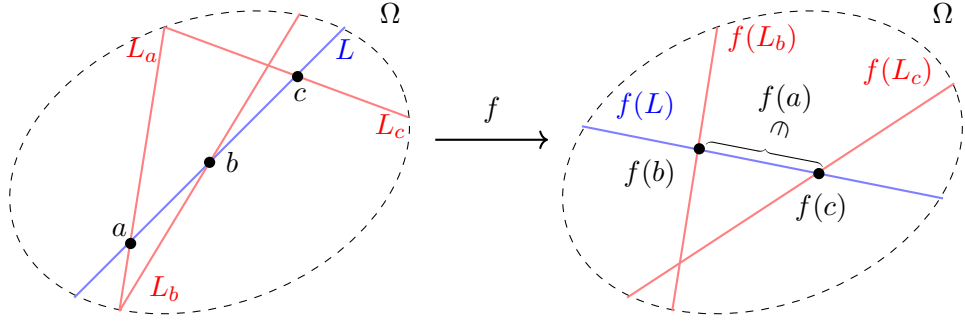
For the general case $n \geq 2$, via Lemma \ref{lem:separate_line_existence}, we similarly get a line $L_a \subset \Omega$ through $a$ and hyperplanes $H_b,H_c \subset \Omega$ containing $b,c$ such that $H_b \cap H_c \neq \emptyset$ and $L_a \cap H_b = L_a \cap H_c = \emptyset$. By Lemma \ref{lem:preserve_subspace}, we know that $f(L_a)$ is a line through $f(a)$ and $f(H_b), f(H_c)$ are hyperplanes containing $f(b), f(c)$, respectively. For emphasis, note that $f$ takes hyperplanes -- the subspaces that disconnect $\Omega$ -- to hyperplanes. Moreover, the intersection properties of $L_a,H_b,H_c$ hold under the image of $f$ since $f$ is a bijection. If $f(a)$ lies in the interval $(f(b), f(c))$, then the fact that $f(H_b)$ and $f(H_c)$ intersect forces $f(L_a)$ to intersect one of these two hyperplanes by convexity, which would again provide a contradiction. 
\end{proof}





\subsection{Proving interval preserving and continuity}\hrulefill
\label{subsec:interval_preserving_continuity}

We mainly wrap up a few results for our map in study with established facts from Section \ref{subsec:order_preserving}.
\begin{theorem}\label{thm:interval_preserving}
For $n \geq 2$, let $\Omega \subset \R\P^n$ be a strictly convex set endowed with the Hilbert metric and $f\colon \Omega \to \Omega$ be a bijection, mapping complete geodesics to complete geodesics as sets. Then $f$ is interval preserving and is a fractional linear map.
\end{theorem}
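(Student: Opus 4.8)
The plan is to read off interval preservation directly from the order-preservation already established in Lemma \ref{lem:preserve_order}, and then feed the result into Theorem \ref{thm:interval_preserving_to_fractional} to obtain the fractional linear conclusion. First I would fix a segment $[x,y] \subset \Omega$ with $x \neq y$ and let $L$ be the unique line of $\Omega$ through $x$ and $y$. Since $f$ sends the complete geodesic $L$ into a line, we have $f(L) \subseteq L'$ for some line $L'$ of $\Omega$, and I would begin by upgrading this to a bijection $f|_L \colon L \to L'$. Injectivity is inherited from $f$, and surjectivity comes from Lemma \ref{lem:inverse_same_convex}: applying $f^{-1}$ (which also carries lines to lines) to $L'$ produces a line containing $L$, and by uniqueness of geodesics in the strictly convex setting this line must equal $L$, forcing $f(L) = L'$.

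Next I would observe that, by Lemma \ref{lem:preserve_order}, the restriction $f|_L \colon L \to L'$ is an order isomorphism between the two open intervals $L$ and $L'$, each being a Euclidean segment intersected with $\Omega$ and hence linearly ordered up to orientation. A monotone bijection between linearly ordered sets carries each closed order-interval onto a closed order-interval: concretely, $z$ lies between $x$ and $y$ on $L$ if and only if $f(z)$ lies between $f(x)$ and $f(y)$ on $L'$, so that $f([x,y]) = [f(x),f(y)]$ with endpoints included. As $[x,y]$ was arbitrary, this shows $f$ is interval preserving in the sense of the relevant definition.

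Finally, $f$ is an injective, interval-preserving self-map of the open, connected, convex set $\Omega \subset \R^n$ (viewing $\Omega$ in an affine chart, where Hilbert geodesics are ordinary segments), so Theorem \ref{thm:interval_preserving_to_fractional} together with Remark \ref{rmk:replace_with_cnn} gives that $f$ is a fractional linear map. I expect the only delicate bookkeeping to occur at the two ends of the argument, namely verifying that $f|_L$ is genuinely onto $L'$ (so that the order comparison applies to the whole geodesic rather than a proper sub-image) and that closed segments are sent to closed segments including their endpoints; both follow cleanly once $f|_L$ is known to be an order isomorphism, so I do not anticipate any substantial new difficulty beyond assembling the pieces already in hand.
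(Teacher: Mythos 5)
Your proposal is correct and follows essentially the same route as the paper's proof: both use Lemma \ref{lem:preserve_order} for the inclusion $f([x,y]) \subseteq [f(x),f(y)]$, invoke Lemma \ref{lem:inverse_same_convex} (the inverse has the same properties) to get the reverse inclusion, and then apply Theorem \ref{thm:interval_preserving_to_fractional}. Your version merely spells out the intermediate step that $f$ restricts to an order-preserving bijection $L \to L'$, which the paper leaves implicit.
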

\begin{proof}
Since $f$ is injective, it suffices to show $f$ is interval preserving by Theorem \ref{thm:interval_preserving_to_fractional}. Let $x,y \in \Omega$. Then because $f$ preserves order of points by Lemma \ref{lem:preserve_order}, we have that $f([x,y]) \subset [f(x), f(y)]$. Now, note that $f^{-1}$ is a bijection, mapping lines to lines by Lemma \ref{lem:inverse_same_convex}, so that gives us the reverse inclusion.
\end{proof}

We shall now prove that any bijection on $\Omega$ preserving lines must be continuous. This could just be stated as a corollary of Theorem \ref{thm:interval_preserving_to_fractional} and Theorem \ref{thm:interval_preserving}, but we provide an independent proof of this fact using the machinery we have developed. This is the content of Theorem \ref{thm:continuity}.

 
\begin{theorem}\label{thm:continuity}
For $n \geq 2$, let $\Omega \subset \R\P^n$ be a strictly convex set endowed with the Hilbert metric and $f\colon \Omega \to \Omega$ be a bijection, mapping complete geodesics to complete geodesics as sets. Then $f$ is continuous.
\end{theorem}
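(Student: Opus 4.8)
The plan is to deduce continuity purely from the combinatorial properties already in hand, without invoking Theorem \ref{thm:interval_preserving_to_fractional}. Fix an affine chart in which $\Omega$ is a bounded open convex subset of $\R^n$; since the Hilbert topology agrees with the Euclidean subspace topology on $\Omega$, it suffices to prove that $f$ is continuous for the Euclidean metric. The two facts that drive everything are that $f$ is interval preserving (Theorem \ref{thm:interval_preserving}), with $f^{-1}$ enjoying the same property (combine Lemma \ref{lem:inverse_same_convex} with Lemma \ref{lem:preserve_order}), and that $f$ carries $r$-dimensional subspaces of $\Omega$ to $r$-dimensional subspaces (Lemma \ref{lem:preserve_subspace}). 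My strategy is a squeezing argument: I will show that $f$ maps nondegenerate $n$-simplices onto nondegenerate $n$-simplices while sending interiors to interiors, and then trap a neighborhood of each point between a simplex and its image.

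First I would upgrade interval preservation to simplex preservation. Writing a simplex as an iterated union of segments, $\mathrm{conv}(v_0,\dots,v_k)=\bigcup_{t\in \mathrm{conv}(v_1,\dots,v_k)}[v_0,t]$, an induction on $k$ using that $f([v_0,t])=[f(v_0),f(t)]$ (the endpoints match because $f$ is injective and order preserving by Lemma \ref{lem:preserve_order}) yields $f(\mathrm{conv}(v_0,\dots,v_k))=\mathrm{conv}(f(v_0),\dots,f(v_k))$, and the same holds for $f^{-1}$. Nondegeneracy is where Lemma \ref{lem:preserve_subspace} enters: if the images $f(v_0),\dots,f(v_n)$ of an affinely independent tuple were affinely dependent, their hull would lie in an $(n-1)$-dimensional subspace, yet applying $f^{-1}$ would force a full $n$-simplex to sit inside an $(n-1)$-dimensional subspace, which has empty interior --- a contradiction. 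Finally, since each facet of a simplex is a lower-dimensional simplex mapping to the corresponding facet of the image, $f$ sends the relative boundary of an $n$-simplex to the boundary of its image, and hence $\Int$ to $\Int$.

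The heart of the proof is then the squeeze. Given $p\in\Omega$ and $\varepsilon>0$, choose an $n$-simplex $T\subset B(f(p),\varepsilon)\cap\Omega$ with $f(p)\in\Int T$ (possible since $\Omega$ is open). By the previous paragraph $U:=f^{-1}(T)$ is a nondegenerate $n$-simplex, and since $f(p)\in\Int T$ we get $p\in\Int U$; otherwise $p$ would lie on a facet of $U$, whose image is a facet of $T$, putting $f(p)$ on $\partial T$. As $p$ is interior to the simplex $U$, there is $\delta>0$ with $B(p,\delta)\subset U$, whence $f(B(p,\delta))\subset f(U)=T\subset B(f(p),\varepsilon)$. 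This is exactly continuity of $f$ at $p$.

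The main obstacle is the simplex-preservation step, not the squeeze itself: separate continuity of $f$ along each line (which follows immediately, since a monotone bijection between open intervals has no jumps) does not by itself give joint continuity, so the geometric content must come from controlling how $f$ moves full-dimensional pieces. Interval preservation alone gives the set equality $f(\mathrm{conv})=\mathrm{conv}(f)$, but ruling out degeneration of the image simplex --- and thereby guaranteeing that interiors are genuinely open --- relies essentially on the subspace rigidity of Lemma \ref{lem:preserve_subspace}. Once that is secured, the nested-simplex squeeze closes the argument uniformly at every point.
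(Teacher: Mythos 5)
Your proof is correct, and it takes a genuinely different route from the paper's. The paper's own argument is one-dimensional and indirect: for $y=f(x)$ and an open neighborhood $B \ni y$ it sets up a dichotomy --- either some segment $[x,x']$ meets $f^{-1}(B)$ only in $x$, which is impossible because interval preservation (Theorem \ref{thm:interval_preserving}) would make $[f(x),f(x')]$ meet the open set $B$ only in $y$, or else a punctured ball around $x$ lies in $f^{-1}(B)$, forcing $x\in\Int f^{-1}(B)$. Your argument instead builds full-dimensional control: iterating $f([x,y])=[f(x),f(y)]$ gives $f(\mathrm{conv}(v_0,\dots,v_k))=\mathrm{conv}(f(v_0),\dots,f(v_k))$; Lemma \ref{lem:preserve_subspace} applied to $f^{-1}$ rules out degeneration of the image simplex (a nondegenerate $n$-simplex cannot sit inside an $(n-1)$-dimensional subspace of $\Omega$, which has empty interior); facet-by-facet bookkeeping plus injectivity shows $f(\Int T)=\Int f(T)$; and the nested-simplex squeeze then yields continuity at every point, with $f^{-1}$ inheriting all needed properties via Lemma \ref{lem:inverse_same_convex}. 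Each of these steps checks out. As for what each approach buys: the paper's proof is much shorter, but its first branch --- the claim that if no such $x'$ exists then some punctured ball around $x$ lies in $f^{-1}(B)$ --- is asserted rather than justified, and for an arbitrary subset of $\Omega$ that implication is simply false (a set can meet every segment emanating from $x$ without containing any punctured ball), so the paper is implicitly leaning on unstated structure of $f^{-1}(B)$; your squeeze avoids any claim of that kind at the cost of more setup, is closer in spirit to the simplex-covering strategy of \cite{avidan} that the paper describes after Theorem \ref{thm:interval_preserving_to_fractional}, and gives a little extra for free: since interiors of simplices map onto interiors, $f$ is open, so the same argument shows $f$ is in fact a homeomorphism.
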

\begin{proof}
Let $y \in \Omega$ and $B$ be an open neighborhood around $y$. Then there exists a unique $x \in \Omega$ such that $f(x) = y$. Necessarily, $x$ lies in the preimage of $B$. What we need to show is that there exists some open neighborhood $U$ of $x$ such that $U \subset f^{-1}(B)$ or equivalently, that $x$ belongs to the interior of $f^{-1}(B)$. We prove that either there exists some $x' \in \Omega$ distinct from $x$ such that $[x,x'] \cap f^{-1}(B) = \{x\}$ or $x \in \Int f^{-1}(B)$. If no such $x'$ exists then there exists a $\varepsilon > 0$ such that $B_\varepsilon(x) \setminus \{x\} \subset f^{-1}(B)$. Convexity now forces $x \in f^{-1}(B)$ and in particular, $x \in \Int f^{-1}(B)$ because $x \in B_{\varepsilon}(x) \subset \Int f^{-1}(B)$. Otherwise, we have an $x'$ with the properties noted. Applying Theorem \ref{thm:interval_preserving}, we see that $[f(x),f(x')] \cap B = \{f(x)\}$. But $f(x) = y$ and $B$ was chosen to be a open neighborhood around $y$ so any interval (with more than 1 point) containing $y$ must necessarily hit another point of $B$. Contradiction.
\end{proof}

\subsection{Recovering the disk result}\hrulefill\label{subsec:main_disk}

\begin{example}\label{thm:main_disk}
Let $\D^n$ denote the open unit $n$-disk in $\R^n$. Let $n \geq 2$ and $f\colon \D^n \to \D^n$ be a bijection, mapping complete geodesics to complete geodesics as sets. Then $f$ is a hyperbolic isometry.
\end{example}
We note that this is implied by Theorem \ref{thm:main_omega}. The following proof is mainly an involved example on applying properties of the hyperbolic disk with the fact that $f$ is a fractional linear map from Theorem \ref{thm:interval_preserving} without appealing to projective language. 
\begin{proof}
Let $f\colon \D^n \to \D^n$ be a bijection, mapping complete geodesics to complete geodesics as sets. By precomposing $f$ with a hyperbolic isometry, we can assume that $f(0) = 0$. By Theorem \ref{thm:interval_preserving}, $f$ is a fractional linear map. Hence, for any $x \in \D^n$
$$
f(x) = \frac{Ax + b}{ \langle c,x \rangle + d}
$$
for some fixed $A \in \Hom(\R^n,\R^n)$, $b,c \in \R^n$, and $d \in \R$. Since $f(0) = 0$, this forces $b = 0$. By Remark \ref{rmk:fractional_linear_invertible_equivalence}, we have that $A$ is an invertible linear map and $d$ is nonzero. Now, we claim that $\langle c,x \rangle + d \neq 0$ on $\partial \D^n$. Suppose for contradiction that such an $x \in \partial \D^n$ existed for which $\langle c,x \rangle + d = 0$. If $Ax \neq 0$, then $|f(t)|$ is unbounded for $t \in \D^n$ near $x$. Let $(x_m) \subset \D^n$ be a sequence such that $(x_m) \to x$. Hence, for all $m$, $f(x_m) \in \D^n$ and in particular $||f(x_m)|| < 1$. This tells us that $||Ax_m|| < |\langle c,x_m \rangle + d|$ for all $m$; taking the limit as $m \to \infty$ forces $\langle c,x_m \rangle + d$ to approach $0$ and so $Ax = 0$. Invertibility of $A$ tells us that $x = 0$, which provides a contradiction since we assumed $x \in \partial \D^n$. Therefore, $f$ is well-defined upon extending to $\partial \D^n$ (and we call this extension $f$ without any harm) and must necessarily be continuous since it is a fractional linear map. Moreover, continuity forces $f(\partial \D^n) \subset \partial \D^n$ but since $f^{-1}$ is a map with the same properties as $f$, we actually have $f(\partial \D^n) = \partial \D^n$. Thus, from now on, we view $f$ as a map $\ol{\D^n} \to \ol{\D^n}$. 

Let $e_i \in \partial \D^n$ denote the $i$th standard basis vector for $i=1,\ldots,n$ and write $c = {}^t[c_1,\ldots,c_n] \in \R^n$. Using that $f(e_i), f(-e_i) \in \partial \D^n$, we have that
\begin{align*}
    \left|\left|\frac{A e_i}{c_i + d} \right|\right| = 1 \hspace{4em}
    \left|\left|\frac{A e_i}{c_i - d} \right|\right| = 1
\end{align*}
which tells us that $c_i + d = \pm (c_i - d)$. Since $d \neq 0$, we must have $c_i + d = - (c_i - d)$ i.e., $c_i = 0$. Since $i$ was arbitrary, we have that $f$ must be a map of the form $f(x) = \frac{Ax}{d}$ where $A:\R^n \to \R^n$ is invertible and linear, and $d \in \R$ is nonzero. Let $B = A/d : \R^n \to \R^n$ be a linear map. Then since $B (\partial \D^n) = f(\partial \D^n) = \partial \D^n$, $B$ must be a linear map preserving norms and hence preserves the inner product by the parallelogram identity. In other words, $B$ and hence our map $f$ is an orthogonal linear transformation. 
\end{proof}

\subsection{Proving isometry}\label{subsec:interval_preserving_to_isometry}\hrulefill

\begin{proof}[Proof of Theorem \ref{thm:main_omega}]
Let $f\colon \Omega \to \Omega$ be a bijection, mapping lines to lines as sets. By Theorem \ref{thm:interval_preserving}, for any $x \in \Omega$, we have
$$
f(x) = \frac{Ax + b}{ \langle c,x \rangle + d}
$$
for some fixed $A \in \Hom(\R^n,\R^n)$, $b,c \in \R^n$, and $d \in \R$. If ${}^t[x,1] \in \Omega \subset \R\P^n$ where ${}^t[x,1]$ is in projective coordinates and $\Omega$ is viewed in its ambient projective space, we have that 
$$
\begin{pmatrix}
A & b\\
{}^t c & d
\end{pmatrix}
\begin{bmatrix}
x \\
1
\end{bmatrix}
= \begin{bmatrix} A x + b \\ \langle c,x\rangle + d \end{bmatrix}
= \begin{bmatrix} \frac{A x + b}{\langle c,x\rangle + d} \\ 1 \end{bmatrix}
\in \Omega
$$
Viewing $T := \begin{pmatrix}A & b\\{}^t c & d\end{pmatrix}$ as a projective transformation on $\R\P^n$, $T(\Omega) = \Omega$ and so $T$ is an isometry of $(\Omega,d_\Omega)$. Let $\iota: \Omega \hookrightarrow \R\P^n$ be an embedding into the chart $\{x_{n+1} \neq 0\}$. Then we have the following commutative diagram:
\[
\tikzset{
  symbol/.style={
    draw=none,
    every to/.append style={
      edge node={node [sloped, allow upside down, auto=false]{$#1$}}}
  }
}
\begin{tikzcd}
& \Omega \arrow[swap]{d}{\iota} \arrow{r}{f} & \Omega \\
\R\P^n \arrow[r, symbol=\supset] & \Omega \arrow[swap]{r}{T} & \Omega \arrow[swap]{u}{\iota^{-1}} \arrow[r, symbol=\subset] & \R\P^n 
\end{tikzcd}
\]
Since $\iota,T$ are isometries with respect to the Hilbert metric, it follows that $f$ is an isometry. Explicitly, via $\iota$, $f$ can be extended to the projective transformation $T$ on $\R\P^n$.
\end{proof}

\section{Preliminaries on convex projective surfaces}\label{sec:prelims_surface}
\begin{definition}[Convex projective surface]\label{defn:convex_projective_surface}
Let $\Omega$ be a strictly convex set of $\R\P^2$ and $\Gamma$ a discrete subgroup of $\PGL(3,\R)$ that acts in a free and properly discontinuous manner on $\Omega$. Since the cross ratio is a projective invariant, $\Gamma$ acts by isometries, and so the Hilbert metric on $\Omega$ descends to a metric on $S := \Omega / \Gamma$. We call $S$ a \emph{convex projective surface} and the metric on $S$ is referred to as the \emph{Hilbert metric of $S$}. 
\end{definition}
\begin{definition}[Hyperbolic isometries]
Let $\Omega$ be a strictly convex set of $\R\P^2$. We call a transformation in $\PGL(3,\R)$ that fixes $\Omega$ as a set \emph{hyperbolic} if there are two distinct fixed points of the map on $\partial \Omega$.
\end{definition}

The covering map $\Omega \to S$ is a local isometry. An important note in Definition \ref{defn:convex_projective_surface} is that $S$ being closed of genus $g \geq 2$ tells us non-identity elements of $\Gamma$ are hyperbolic isometries. Furthermore, each free homotopy class of a nontrivial closed curve on our surface contains a unique closed geodesic (up to powers of this geodesic) \cite[Theorem 3.2(3)]{goldman_convex_compact}.

\noindent{\bf Note.} We shall only consider closed convex projective surfaces in this paper. 

Since $\Omega$ is simply connected, $\Gamma$ can be identified as $\pi_1(S)$ and each element $\gamma$ of $\Gamma$ corresponds to a complete geodesic in $\Omega$ on which $\gamma$ acts as translation as $\gamma$ fixes the endpoints of this line on $\partial \Omega$. The (complete) geodesics on $S$ are precisely the images of (complete) geodesics in $\Omega$ under the quotient map, and any lift of a (complete) geodesic on $S$ will provide a countable collection of (complete) geodesics on $\Omega$. Since the Klein-Beltrami model of hyperbolic space is exactly the Hilbert metric on the unit disk, our work to follow applies directly to hyperbolic surfaces, which are just quotients $\Omega / \Gamma$ for the particular case $\Omega = \D^2$. 

Our motivating question is the following (which parallels the question asked in the setting of convex sets as done in Section \ref{sec:prelims_convex}).

\question{Let $\Omega \subset \R\P^2$ be a strictly convex set endowed with the Hilbert metric and $\Gamma \subset \PGL(3,\R)$ be a discrete subgroup of isometries acting on $\Omega$ such that $S := \Omega / \Gamma$ is a closed convex projective surface. Suppose $f\colon S \to S$ is a bijection, mapping complete geodesics to complete geodesics as sets. Is $f$ an isometry for the Hilbert metric on $S$?}

\noindent{\bf Important.} Before we discuss our main result for such bijections on convex projective surfaces, we state a few points of clarification for the language used in this paper. First, the hypotheses on our map $f$ as mentioned in the question is a priori only concerned with geodesics as sets. With this in mind, we will treat geodesics that are the same as sets to be the same entity hence forth. We shall consider a geodesic as closed if it has a closed parametrization and we refer to a geodesic as simple if it has a simple parametrization. This makes the language to follow in the paper a little simpler.

To tackle our motivating question, we tried to understand the possible implications of this statement and perhaps properties of our map that might be simpler to recognize. A consequence of this statement would be our map is continuous and so, closed geodesics would map to closed geodesics. The remainder of Sections \ref{sec:prelims_surface} and \ref{sec:main} serves to dissect this latter statement and its validity; the main result is Theorem \ref{thm:main}, which characterizes equivalent ways of determining whether closed geodesics are preserved.

We now identify what would be a consequence if closed geodesics did not map to closed geodesics under our bijection. Let $S$ be a closed convex projective surface. Suppose $f\colon S \to S$ is a bijection, mapping complete geodesics to complete geodesics as sets, which induces a bijection on the set of geodesics of $S$. Let $\gamma$ be a closed geodesic. If $f(\gamma)$ is not closed, there exists some $y \in \ol{f(\gamma)} \setminus f(\gamma)$. By the smoothness of $f(\gamma)$ and of geodesics in general, we claim that every geodesic (except perhaps one) through $y$ intersects $f(\gamma)$ infinitely many times. Let $U$ be a local, convex chart containing $y$. Note that geodesic segments of $S$ contained in $U$ correspond to conventional straight lines based on our metric. Since $y$ is a limit point of $f(\gamma)$ and geodesics are smooth, any open, convex ball of $y$ contained in $U$ must necessarily contain a chord which is a segment of $f(\gamma)$. The chords can accumulate in a ``parallel" manner near $y$, which would determine a unique unoriented complete geodesic through $y$; this geodesic $\alpha$ (if it exists) is the unique one which may not intersect $f(\gamma)$. This is because unoriented geodesics that are not $\alpha$ and pass through $y$ must intersect infinitely many of the chords of $f(\gamma)$ since geodesic segments in $U$ are straight lines. Figure \ref{fig:closed_to_non_closed} illustrates a visual on what could be happening.

\begin{figure}[H]
\centering
\begin{tikzpicture}
\begin{scope}
\draw[smooth] (0,1) to[out=30,in=150] (2,1) to[out=-30,in=210] (3,1) to[out=30,in=150] (5,1) to[out=-30,in=30] (5,-1) to[out=210,in=-30] (3,-1) to[out=150,in=30] (2,-1) to[out=210,in=-30] (0,-1) to[out=150,in=-150] (0,1);
\draw[smooth] (0.4,0.1) .. controls (0.8,-0.25) and (1.2,-0.25) .. (1.6,0.1);
\draw[smooth] (0.5,0) .. controls (0.8,0.2) and (1.2,0.2) .. (1.5,0);
\draw[smooth] (3.4,0.1) .. controls (3.8,-0.25) and (4.2,-0.25) .. (4.6,0.1);
\draw[smooth] (3.5,0) .. controls (3.8,0.2) and (4.2,0.2) .. (4.5,0);
\node[red] at (0, 0.5) {$\gamma$};
\draw[red] (0, 0) arc(180:0:1 and 0.5);
\draw[red] (0, 0) arc(-180:0:1 and 0.5);
\end{scope}
\begin{scope}[xshift=5cm]
\draw[line width=0.8pt, ->] (1,0) -- (2,0);
\node at (1.5, 0.5) {$f$};
\end{scope}
\begin{scope}[xshift=8cm]
\draw[smooth] (0,1) to[out=30,in=150] (2,1) to[out=-30,in=210] (3,1) to[out=30,in=150] (5,1) to[out=-30,in=30] (5,-1) to[out=210,in=-30] (3,-1) to[out=150,in=30] (2,-1) to[out=210,in=-30] (0,-1) to[out=150,in=-150] (0,1);
\draw[smooth] (0.4,0.1) .. controls (0.8,-0.25) and (1.2,-0.25) .. (1.6,0.1);
\draw[smooth] (0.5,0) .. controls (0.8,0.2) and (1.2,0.2) .. (1.5,0);
\draw[smooth] (3.4,0.1) .. controls (3.8,-0.25) and (4.2,-0.25) .. (4.6,0.1);
\draw[smooth] (3.5,0) .. controls (3.8,0.2) and (4.2,0.2) .. (4.5,0);
\node[red] at (1, 0.75) {$f(\gamma)$};

\draw[hobby,red] plot coordinates {(5.5, 0.275) (5.25,0.4) (3,0.5) (1,0.5) (-0.25,0.4) (-0.5,0.275)};
\foreach \x in {0.125,0.0625,0.03125,0.015625} {
    \draw[red] (-0.5,\x) arc(180:0:0.51 and 0.2);
    \draw[red] (5.5,\x) arc(180:0:-0.51 and 0.2);
}
\node at (-0.125,-0.375) {$y$};
\node at (0,0.15)[circle,fill,inner sep=1.25pt] {};

\draw[dashed] (0,0.125) circle (0.3cm);
\path[draw, line width=0.5pt] (-0.25,0) -- (-1, -1.5);
\draw[dashed] (-1.5,-2.5) circle (1cm);
\draw[red,line width=0.2pt] (-2.5,-2.25) -- (-0.5,-2.25);
\draw[red,line width=0.2pt] (-2.5,-2.4) -- (-0.5,-2.4);
\draw[red,line width=0.2pt] (-2.55,-2.6) -- (-0.45,-2.6);
\draw[red,line width=0.2pt] (-2.55,-2.65) -- (-0.45,-2.65);
\draw[red,line width=0.3pt] (-2.55,-2.7) -- (-0.45,-2.7);
\draw[red,line width=0.4pt] (-2.55,-2.72) -- (-0.45,-2.72);
\node at (-1.5,-3) {$y$};
\node at (-1.5,-2.75)[circle,fill,inner sep=0.8pt] {};
\node at (-0.25, -1.75) {$U$};
\end{scope}
\end{tikzpicture}
\caption{If $f(\gamma)$ is not closed, then for any $y \in \ol{f(\gamma)} \setminus f(\gamma)$, every geodesic through $y$ except perhaps one intersects $f(\gamma)$ infinitely many times.}
\label{fig:closed_to_non_closed}
\end{figure}
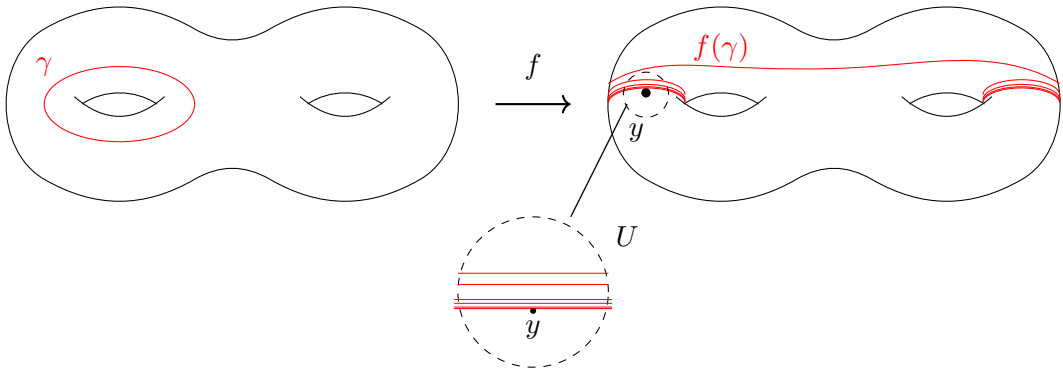
Thus, if for any point on our convex projective surface we could find at least two complete geodesics passing through the point and intersecting finitely many times, we would have that $f$ maps closed geodesics to closed geodesics. Note that this discussion has no assumptions added to our map $f$ itself. We are demanding more (if at all) from our convex projective surface and its distribution of geodesics to guarantee $f$ has the desired properties. It still remains to be answered whether any convex projective surface has this attribute.

We shall now discuss a few facts about (nontrivial) closed geodesics on our surface. Since $\Gamma$ is a discrete subgroup of $\PGL(3,\R)$, there are countably many closed geodesics on $S$. Moreover, \cite[Theorem 3.2(4)]{goldman_convex_compact} states that for any pair $(x,y) \in \partial\Omega\times\partial\Omega$, there exists a sequence in $\partial\Omega\times\partial\Omega$ consisting of the fixed points of a collection of isometries in $\Gamma$ that converges to $(x,y)$ (the precise formulation requires distinguishing the fixed points of elements of $\Gamma$ as attracting and repelling, which is defined in \cite[\S 1.9]{goldman_convex_compact}, but is unnecessary for our purposes). We claim that this proves the collection of points on closed geodesics forms a dense set of $S$. Indeed, consider any $x \in S$ and open neighborhood $U$ around $x$. We want to show that there exists a closed geodesic passing through $U$. Without loss of generality, suppose $U$ is isometric to a lifting $\widetilde{U}$ (shrink $U$ if necessary) and set a lifting of $x$ as $\tilde{x} \in \widetilde{U}$. Then just by considering any geodesic $\ell \subset \Omega$ through $\tilde{x}$, which of course passes through $\widetilde{U}$, and the corresponding endpoints of $\ell$ on $\partial\Omega$, \cite[Theorem 3.2(4)]{goldman_convex_compact} presents us with geodesics associated to elements of $\Gamma$ arbitrarily close to $\ell$ in the region $U$. Any of these geodesics descends to a closed geodesic intersecting $U$ on $S$ nontrivially.

In contrast, the collection of points that lie on simple closed geodesics form a nowhere dense set in $S$ and are of Hausdorff dimension $1$ in $S$ \cite[Theorem 1.11; Theorem 6.10]{birman_series_for_convex_projective}. 

Before we state our main result, we will require the notion of a filling collection (see \cite[\S 1.3.2]{primer} or \cite{kudlinska} for more), which is really independent of the convex projective structure on our surface as we shall see in Lemma \ref{lem:characterization_of_filling}.

\begin{definition}[Filling collections]\label{defn:filling}
Let $S$ be a closed convex projective surface. Let $\cC$ be a finite collection of closed, essential (i.e., homotopically nontrivial) geodesics on $S$. We say that $\cC$ is \emph{filling} or is a \emph{filling collection} if every non-trivial geodesic in $S$ intersects an element of $\cC$ transversely. We call a filling collection \emph{simple} if it consists of simple closed geodesics.
\end{definition}

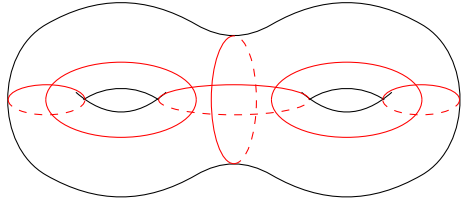
\begin{figure}[H]
\centering
\begin{tikzpicture}
\draw[smooth] (0,1) to[out=30,in=150] (2,1) to[out=-30,in=210] (3,1) to[out=30,in=150] (5,1) to[out=-30,in=30] (5,-1) to[out=210,in=-30] (3,-1) to[out=150,in=30] (2,-1) to[out=210,in=-30] (0,-1) to[out=150,in=-150] (0,1);
\draw[smooth] (0.4,0.1) .. controls (0.8,-0.25) and (1.2,-0.25) .. (1.6,0.1);
\draw[smooth] (0.5,0) .. controls (0.8,0.2) and (1.2,0.2) .. (1.5,0);
\draw[smooth] (3.4,0.1) .. controls (3.8,-0.25) and (4.2,-0.25) .. (4.6,0.1);
\draw[smooth] (3.5,0) .. controls (3.8,0.2) and (4.2,0.2) .. (4.5,0);
\draw[red] (-0.5,0) arc(180:0:0.51 and 0.2);
\draw[red,dashed] (-0.5,0) arc(180:0:0.51 and -0.2);
\draw[red] (2.5,-0.85) arc(270:90:0.3 and 0.85);
\draw[red,dashed] (2.5,-0.85) arc(270:450:0.3 and 0.85);
\draw[red] (5.5,0) arc(180:0:-0.51 and 0.2);
\draw[red,dashed] (5.5,0) arc(180:0:-0.51 and -0.2);
\draw[red] (1.5,0) arc(180:0:1 and 0.2);
\draw[red,dashed] (1.5,0) arc(180:0:1 and -0.2);
\draw[red] (0, 0) arc(180:0:1 and 0.5);
\draw[red] (0, 0) arc(-180:0:1 and 0.5);
\draw[red] (3, 0) arc(180:0:1 and 0.5);
\draw[red] (3, 0) arc(-180:0:1 and 0.5);
\end{tikzpicture}
\caption{Example of a simple filling collection on a genus 2 surface}
\label{fig:simple_filling_collection}
\end{figure}
Definition \ref{defn:filling} may not be how common literature states this concept, but we shall provide the necessary equivalences in Lemma \ref{lem:characterization_of_filling}; the lemma provides a neat characterization on determining whether a collection of geodesics is filling or not (\cite{kudlinska} provides an algorithm for this as well). As we shall see in Section \ref{sec:simple_implies_filling}, Figure \ref{fig:simple_filling_collection} represents a general fact: a closed surface always contains a simple filling collection.

\section{Main results for bijections on convex projective surfaces that preserve lines}\label{sec:main}

\begin{subsection}{Notation}
All closed geodesics will be essential (i.e., homotopically nontrivial). A subgroup of $\PGL(3,\R)$ consisting of hyperbolic isometries more carefully means that non-identity elements of the group are hyperbolic. Any pants decomposition considered will only be of simple closed geodesic curves. We may use the terms lines and geodesics interchangeably since geodesics with respect to the Hilbert metric on $\Omega$ are the usual restriction of lines in $\R^n$ and geodesics of $S$ are the projection of geodesics in $\Omega$. The following serves as important notation for this section:
\begin{table}[H]
\centering
\begin{tabular}{rcl}
$\Omega$ & $:$ & strictly convex set of $\R\P^2$\\
$\Gamma$ & $:$ & discrete subgroup of $\PGL(3,\R)$ that preserves $\Omega$ as a set, acts freely and properly\\
& & discontinuously on $\Omega$ and non-identity elements are hyperbolic isometries\\
$S$ & $:$ & a closed convex projective surface $\Omega / \Gamma$ of genus $g \geq 2$\\
$f$ & $:$ & bijection on $S$ mapping lines to lines that induces a bijection on the set of lines on $S$
\end{tabular}
\end{table}
\end{subsection}
Our main result for the surfaces setting is the following:
\begin{theorem}\label{thm:main}
Let $\Omega \subset \R\P^2$ be a strictly convex set endowed with the Hilbert metric and $\Gamma$ be a discrete subgroup of $\PGL(3,\R)$ fixing $\Omega$ as a set. Suppose that $\Omega / \Gamma$ is a closed surface with genus $g \geq 2$. If $f\colon S \to S$ is a bijection, mapping complete geodesics to complete geodesics as sets, which induces a bijection on the set of complete geodesics of $S$, then the following are equivalent:
\begin{enumerate}[(a)]
\item there exists a filling collection of closed geodesics mapped under $f$ to a collection of closed geodesics;
\item $f$ sends closed geodesics to closed geodesics;
\item $f$ sends simple closed geodesics to simple closed geodesics.
\end{enumerate}
\end{theorem}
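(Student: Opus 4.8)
The plan is to prove the cycle $(c)\Rightarrow(a)\Rightarrow(b)\Rightarrow(c)$. The single tool driving the first two implications is a detection lemma: \emph{a complete geodesic $\delta$ not contained in a filling collection $\cD$ is closed if and only if $\delta\cap\cD$ is finite.} The forward direction is clear, since a closed geodesic is compact and, on a strictly convex quotient, meets $\cD$ in isolated transverse points (distinct geodesics meet at most once locally, by unique geodesicity in $\Omega$), hence finitely many. For the converse, if $\delta\cap\cD$ were finite then a sub-ray of $\delta$ would lie in a single complementary region of $S\setminus\cD$; by the characterization of filling collections (Lemma \ref{lem:characterization_of_filling}) that region has compact closure, a geodesic polygon, and lifting the ray to $\Omega$ displays it as an unbounded sub-arc of a single chord trapped inside a compact polygon of $\Omega$, which is impossible because a chord of $\Omega$ can only enter and leave such a polygon through its sides. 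I would also record at the outset that $f$, being a geodesic-preserving bijection that permutes the set of geodesics, preserves incidences and (since on $S$ distinct geodesics meet transversely) transversality; hence $f$ carries filling collections to filling collections, as any nontrivial geodesic $\eta=f(\mu)$ meets $f(c_i)$ whenever $\mu$ meets $c_i$.

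Given these preliminaries the two easy implications are short. For $(c)\Rightarrow(a)$ I invoke Section \ref{sec:simple_implies_filling}, where a closed surface is shown to carry a simple filling collection $\cC$; by $(c)$ its image $f(\cC)$ consists of simple, hence closed, geodesics, so $\cC$ witnesses $(a)$. For $(a)\Rightarrow(b)$, let $\cC$ be filling with $f(\cC)$ closed; then $f(\cC)$ is filling by the previous paragraph, and for any closed geodesic $\gamma$ the set $\gamma\cap\cC$ is finite, so $f(\gamma)\cap f(\cC)=f(\gamma\cap\cC)$ is finite and the detection lemma forces $f(\gamma)$ to be closed. Applying this same argument to $f^{-1}$ with the filling collection $f(\cC)$ shows that, under $(b)$, both $f$ and $f^{-1}$ preserve closed geodesics; thus $f$ is a bijection of the set of closed geodesics which preserves the number of intersection points of any two of them, a fact I keep in reserve.

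The implication $(b)\Rightarrow(c)$ is the crux, and I expect it to be the main obstacle. The core difficulty is that a self-intersection of $f(\gamma)$ is, as a subset of $S$, a single point carrying two geodesic branches of distinct directions; this feature is invisible to closedness, fillingness, and pairwise intersection counts, all of which treat the self-intersection as one ordinary point. To see it one must show that $f$ preserves the cyclic order along geodesics of $S$, that is, sends each parametrized geodesic to an order-compatible reparametrization of its image; this is the surface analogue of the betweenness result of Section \ref{subsec:order_preserving}. Granting such order preservation, $f$ carries the parametrized geodesic $\gamma$ onto the parametrized geodesic $f(\gamma)$ compatibly with their cyclic orders, and since $\gamma$ is embedded (its parametrization injective, as $\gamma$ is simple) this transports embeddedness to $f(\gamma)$; a self-intersection would make the parametrization of $f(\gamma)$ two-to-one, contradicting order-compatibility with the injective parametrization of $\gamma$. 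Hence $f(\gamma)$ is simple.

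The weight of the theorem therefore sits in proving this local order preservation on $S$. The separating-hyperplane construction of Lemma \ref{lem:separate_line_existence}, which forced order preservation in the convex case, does not transport directly: $S$ is neither convex nor simply connected, a single complete geodesic need not separate it, and $f$ is not known to be continuous on $S$ (indeed whether $f$ is an isometry is left open), so one cannot pass to a convex chart and quote Theorem \ref{thm:main_omega}. My approach would be to work in a small convex chart around a putative self-intersection, combining the chord-accumulation analysis behind Figure \ref{fig:closed_to_non_closed} with the density of closed geodesics to build separating configurations of closed geodesics on $S$ that play the role of the hyperplanes $H_b,H_c$ and obstruct any reversal of order; transferring this local obstruction into a global contradiction without continuity is precisely the hard part.
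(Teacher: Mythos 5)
Your cycle $(c)\Rightarrow(a)\Rightarrow(b)$ matches the paper's structure: the paper also proves $(c)\Rightarrow(a)$ by exhibiting a simple filling collection from a pants decomposition, and also routes $(a)\Rightarrow(b)$ through exactly your detection lemma (its Lemma \ref{lem:characterization_of_closed_by_filling}). Where you genuinely differ there is in how the detection lemma's converse is proved. The paper rests it on Proposition \ref{prop:geods_in_disk}, whose proof is dynamical: it invokes the Anosov closing lemma, available because Benoist showed the geodesic flow of a closed convex projective surface is Anosov. You instead lift the terminal ray of $\delta$ to $\Omega$ and trap it, as an infinite-length sub-arc of a chord, inside a lift of a complementary disk. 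This is a legitimate and more elementary alternative, provided you fill in the one nontrivial point you elide: that a lift of a complementary region is precompact in $\Omega$ (the region has finite intrinsic diameter, the covering map is a local isometry, and metrically bounded sets of the Hilbert metric are precompact), after which the contradiction is immediate since a ray of a chord converges to a point of $\partial\Omega$ and so leaves every compact subset of $\Omega$.

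The genuine gap is $(b)\Rightarrow(c)$: you do not prove it, and your diagnosis of why it is hard is mistaken. You assert that a self-intersection of $f(\gamma)$ is ``invisible to closedness, fillingness, and pairwise intersection counts,'' and are therefore driven toward an order-preservation/continuity statement on $S$ that you concede you cannot establish. But non-simplicity \emph{is} visible to intersection data with closed geodesics, and this is precisely how the paper argues. If $f(\gamma)$ is closed but not simple, it contains a simple closed curve $g$, which is freely homotopic to a unique \emph{simple} closed geodesic $\gamma'$; since geometric intersection numbers are realized by geodesic representatives, every closed geodesic meeting $\gamma'$ must meet $g$, hence must meet $f(\gamma)$. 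Under $(b)$ --- which, as you correctly note, also holds for $f^{-1}$ --- one transports this through the bijection: every closed geodesic meeting $f^{-1}(\gamma')$ meets $\gamma$. This is then contradicted by a purely surface-topological fact, Corollary \ref{cor:intersect_closed_but_not_simple} (proved via the pants-decomposition constructions of Lemma \ref{lem:intersect_simple_closed_but_not_simple}): for the simple closed geodesic $\gamma$ and the distinct closed geodesic $f^{-1}(\gamma')$ there exists a closed geodesic $\beta$ meeting $f^{-1}(\gamma')$ but disjoint from $\gamma$.

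So the missing idea is not a betweenness or continuity statement for $f$ on $S$ (nothing of that sort appears in the paper, and whether $f$ is even continuous is left open there); it is the conversion of non-simplicity of $f(\gamma)$ into the free-homotopy statement about $\gamma'$, followed by pulling the resulting intersection pattern back through $f^{-1}$. Your parenthetical remark that $f$ and $f^{-1}$ preserve the number of intersection points of closed geodesics is exactly the fact ``kept in reserve'' that, combined with the realization of intersection numbers by geodesics, finishes the theorem; the chord-accumulation and separating-configuration program you sketch in your final paragraph is neither needed nor, as you yourself observe, obviously completable without continuity.
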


We note the extra assumption of $f$ being a bijection on the set of geodesics; this guarantees $f^{-1}$ is a map with the same properties as $f$ (this was a redundant assumption for our study in $\Omega$ by Lemma \ref{lem:inverse_same_convex}). This hypothesis is added because there can exist several distinct geodesics between any two points.

As stated in Section \ref{sec:prelims_surface}, points on closed geodesics of $S$ form a dense set of $S$ and those on simple closed form a nowhere dense set. Hence, from a topological perspective, the equivalence of (b) and (c) seems notable. We prove this theorem by $(a) \Rightarrow (b)$ in Section \ref{sec:filling_implies_closed}, $(b) \Rightarrow (c)$ in Section \ref{sec:closed_implies_simple}, and $(c) \Rightarrow (a)$ in Section \ref{sec:simple_implies_filling}. Before we progress, we prove a quick corollary of this theorem.

\begin{definition}[Separating simple closed curves]
Let $S$ be a closed convex projective surface. Let $g \subset S$ be a simple closed curve. We say that $g$ is \emph{separating} if the complement of $g$ is the disjoint union of two subsurfaces.
\end{definition}

\begin{corollary}
Let $S$ be a closed convex projective surface. Suppose $f\colon S \to S$ is a bijection, mapping complete geodesics to complete geodesics as sets, which induces a bijection on the set of geodesics of $S$. If Theorem \ref{thm:main} holds, then $f$ sends separating simple closed geodesics to separating simple closed geodesics.
\end{corollary}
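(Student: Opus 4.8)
The plan is to translate \emph{separating} into a mod-$2$ homological condition and to show that the hypotheses force $f$ to preserve the $\Z/2\Z$-intersection pairing on $H_1(S;\Z/2\Z)$. Recall the standard topological fact (see \cite{primer}) that a simple closed curve $g$ on the closed surface $S$ is separating if and only if $[g]=0$ in $H_1(S;\Z/2\Z)$, and that, since the mod-$2$ intersection form is nondegenerate and $H_1(S;\Z/2\Z)$ is spanned by classes of simple closed curves, this holds if and only if the mod-$2$ intersection number $\hat\imath(g,d)$ vanishes for every simple closed geodesic $d$. Because we are assuming the conclusion of Theorem \ref{thm:main}, condition (c) holds, so $f$ carries simple closed geodesics to simple closed geodesics; as $f$ induces a bijection on the set of geodesics, $f^{-1}$ enjoys the same properties, and therefore $f$ restricts to a bijection of the set of simple closed geodesics of $S$. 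It thus suffices to prove that $f$ preserves the parity of intersection numbers between simple closed geodesics, and then transport the vanishing condition across $f$.

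For the intersection step I would first observe that, for any two distinct simple closed geodesics $c,d$, restriction of $f$ gives a bijection $c\cap d \to f(c)\cap f(d)$: indeed $f(c\cap d)\subseteq f(c)\cap f(d)$ is immediate, and if $q\in f(c)\cap f(d)$ then its unique preimage $p=f^{-1}(q)$ lies on $f^{-1}(f(c))=c$ and on $d$, using that $f$ is a bijection on points and on the set of geodesics. Hence $|c\cap d|=|f(c)\cap f(d)|$. Next I would note that two distinct geodesics meet transversally in finitely many points: in a convex chart geodesics are straight segments, so distinct geodesics through a common point cross transversally, and they cannot share a subarc without coinciding by uniqueness of geodesics in the strictly convex $\Omega$; compactness of $S$ gives finiteness. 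Since the parity of the number of transverse intersection points of two closed curves is a homotopy invariant equal to the $\Z/2\Z$-intersection number, we obtain $\hat\imath(f(c),f(d))=|f(c)\cap f(d)|\bmod 2=|c\cap d|\bmod 2=\hat\imath(c,d)$.

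With this, the conclusion is immediate. Let $g$ be a separating simple closed geodesic, so $\hat\imath(g,d)=0$ for every simple closed geodesic $d$. Given an arbitrary simple closed geodesic $e$, surjectivity of $f$ on simple closed geodesics lets us write $e=f(d)$, and then $\hat\imath(f(g),e)=\hat\imath(f(g),f(d))=\hat\imath(g,d)=0$. As $e$ was arbitrary, $[f(g)]=0$ in $H_1(S;\Z/2\Z)$, so $f(g)$ is separating; note $f(g)$ is genuinely a simple closed geodesic by condition (c). I expect the one point needing care to be the middle step: one must justify that the parity of the transverse-intersection count is the well-defined homotopy-invariant mod-$2$ pairing (so that preserving $|c\cap d|$ actually preserves $\hat\imath$), together with the transversality and finiteness of intersections coming from the straight-line, uniquely geodesic structure of the Hilbert metric on $\Omega$; everything else is a formal consequence of $f$ being a point-bijection that bijects the set of (simple closed) geodesics.
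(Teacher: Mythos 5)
Your proposal is correct and is essentially the paper's argument in homological clothing: both rest on the two facts that a point-bijection inducing a bijection on geodesics preserves intersection counts between simple closed geodesics, and that a simple closed curve is separating exactly when it meets every simple closed geodesic an even number of times. The only difference is packaging --- the paper argues by contradiction, producing a simple closed curve (hence a geodesic) meeting the supposedly nonseparating $f(\gamma)$ exactly once and pulling it back through $f^{-1}$, while you run the same parity mechanism directly via the nondegenerate pairing on $H_1(S;\Z/2\Z)$ --- and both versions equally rely on the lightly glossed point that the conditions of Theorem \ref{thm:main} also hold for $f^{-1}$ (e.g.\ by applying the theorem to $f^{-1}$ with the filling collection $f(\cC)$), which is what makes $f$ surjective on the set of simple closed geodesics.
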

\begin{proof}
Let $\gamma$ be a separating simple closed geodesic. If $f(\gamma)$ is nonseparating, one can construct a simple closed curve $g$ intersecting $f(\gamma)$ once transversely (\cite[\S 1.3.2]{primer} -- stated for hyperbolic surfaces but a similar argument follows for the convex case) and so the associated geodesic to the free homotopy class of $g$ must intersect $f(\gamma)$ once transversely (\cite[Theorem 9.6.7]{ratcliffe} -- stated for hyperbolic surfaces but a similar argument follows for the convex case). Now, $f^{-1}(\alpha)$ is a simple closed geodesic that intersects $\gamma$ once. This cannot happen since $\gamma$ is separating simple so any other simple closed geodesic must intersect it an even number of times.
\end{proof}

A quick remark of this is that our map would hence send a collection of simple closed geodesics determining a pair of pants to another such collection (possibly the same).

\subsection{Preserving a filling collection implies preservation of closed geodesics}\hrulefill\label{sec:filling_implies_closed}

We first prove a lemma that characterizes filling collections. These collections are often defined to intersect every closed, essential curve (i.e., complement is a disjoint union of disks \cite{kudlinska}) and are only said to be simple; we show that this is equivalent to our Definition \ref{defn:filling} and we provide a stronger characterization of such collections in our particular setting, which crucially utilizes the structure of the geodesic flow on closed convex projective surfaces. 


\begin{lemma}\label{lem:characterization_of_filling}
Let $S$ be a closed convex projective surface. Let $\cF$ be a collection of closed geodesics on $S$. Then the following are equivalent:
\begin{enumerate}[(a)]
\item $\cF$ is filling;
\item every closed geodesic intersects an element of $\cF$ transversely;
\item every simple closed geodesic intersects an element of $\cF$ transversely;
\item every closed, essential curve intersects an element of $\cF$ transversely;
\item the complement $S \setminus \cF$ is a disjoint union of disks.
\end{enumerate}
\end{lemma}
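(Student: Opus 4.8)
The plan is to establish the cycle $(a)\Rightarrow(b)\Rightarrow(c)\Rightarrow(e)\Rightarrow(a)$ and then to attach $(d)$ by proving $(e)\Rightarrow(d)\Rightarrow(b)$. The implications $(a)\Rightarrow(b)\Rightarrow(c)$ are immediate from the nesting of the test classes: a simple closed geodesic is a closed geodesic, and a closed geodesic is in particular a non-trivial geodesic. Likewise $(d)\Rightarrow(b)$ is immediate once we observe that a closed geodesic is an essential closed curve. Throughout I would use the fact that two \emph{distinct} geodesics of $S$ can only meet transversely (a tangency would force them to coincide, by uniqueness of the geodesic through a point in a given direction), so for geodesics the words ``intersects'' and ``intersects transversely'' carry the same content.

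The first substantive step is $(e)\Rightarrow(a)$, which is where the non-closed complete geodesics get handled, and I expect this to go through cleanly by lifting to $\Omega$. Suppose $S\setminus\cF$ is a disjoint union of disks and, for contradiction, that some non-trivial complete geodesic $\ell$ misses every element of $\cF$. Then $\ell$ is connected and lies in $S\setminus\cF$, hence inside a single complementary disk $D$. Since $\ol{D}$ is a compact, simply connected subsurface, it lifts homeomorphically to a compact set $\widetilde{D}\subset\Omega$, and the connected lift of $\ell$ sits inside this one copy; thus $\ell$ lifts to a complete geodesic $\widetilde{\ell}$ of $\Omega$ contained in the compact set $\ol{\widetilde{D}}\subset\Omega$. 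But a complete geodesic of $\Omega$ is a chord with both endpoints on $\partial\Omega$, so it escapes every compact subset of $\Omega$, a contradiction. The ``downward'' implications are then topological: for $(e)\Rightarrow(d)$, an essential closed curve cannot be freely homotoped into a complementary disk, so its (unique) geodesic representative cannot be disjoint from $\cF$ and therefore meets it transversely; and $(d)\Rightarrow(b)$ is just restriction to closed geodesics. Together with $(e)\Rightarrow(a)$ this closes the loop once $(c)\Rightarrow(e)$ is in hand.

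The heart of the argument, and the main obstacle, is $(c)\Rightarrow(e)$: upgrading from the weakest hypothesis (only simple closed geodesics must cross $\cF$) to control of the whole complement. I would argue by contrapositive. If a complementary region $R$ of $S\setminus\cF$ is not a disk, then it lifts to a convex complementary region $\widetilde{R}$ of $\Omega\setminus\widetilde{\cF}$, where $\widetilde{\cF}$ is the $\Gamma$-invariant family of chords covering $\cF$; since $\widetilde{R}$ is convex, hence simply connected, it is the universal cover of $R$ and $\pi_1(R)\cong\mathrm{Stab}_\Gamma(\widetilde{R})\hookrightarrow\Gamma$. Thus $R$ non-disk produces a nontrivial $g\in\Gamma$ preserving $\widetilde{R}$. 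The aim is to extract from $R$ an essential \emph{simple} closed curve that is disjoint from $\cF$ and not freely homotopic to any element of $\cF$: its geodesic representative is then a simple closed geodesic missing $\cF$ (geodesic representatives realize the zero geometric intersection number, and two geodesics of intersection number zero are equal or disjoint), contradicting $(c)$. When $\ol{R}$ has negative Euler characteristic one takes any non-boundary-parallel essential simple closed curve in $R$. The delicate case is when $\ol{R}$ is an annulus, whose core is freely homotopic to its boundary running along $\cF$; here one must exclude that the core's geodesic representative is itself an element of $\cF$. This is exactly where the structure of the geodesic flow enters: the nontrivial $g\in\Gamma$ is a hyperbolic isometry, its translation axis lies in $\ol{\widetilde{R}}$ by convexity, and the uniqueness of the geodesic representative in each free homotopy class — equivalently, that no two distinct closed geodesics are freely homotopic — forbids a complementary annulus bounded by geodesics of $\cF$ with core parallel to them. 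I expect the careful verification of this annular (boundary-parallel) case, confirming that every non-disk complementary region genuinely yields a simple closed geodesic avoiding $\cF$, to be the most technical part; orientability of $S$ conveniently removes one-sided curves and Möbius complementary regions from consideration.
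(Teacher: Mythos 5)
Your overall architecture is genuinely different from the paper's: you route the substantive work through $(c)\Rightarrow(e)$ and a universal-cover compactness argument for $(e)\Rightarrow(a)$, whereas the paper proves $(c)\Rightarrow(d)\Rightarrow(e)$ cheaply (simple sub-loops plus intersection numbers, then a curve in a non-disk component) and concentrates all difficulty in $(e)\Rightarrow(a)$, which it settles with Proposition \ref{prop:geods_in_disk}, i.e.\ with Benoist's theorem that the geodesic flow is Anosov together with the Anosov closing lemma. Avoiding dynamics entirely, as you attempt, would be a real gain --- but two of your steps have genuine gaps. First, in $(e)\Rightarrow(a)$ you assert that $\ol{D}$ is a ``compact, simply connected subsurface'' which ``lifts homeomorphically to a compact set.'' This is false in general: the closure in $S$ of a complementary open disk has its boundary arcs identified in $S$ (if $\cF$ has a single complementary disk, then $\ol{D}=S$). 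What lifts homeomorphically is the \emph{open} disk $D$; the relative compactness in $\Omega$ of that lift $\widetilde{D}$ --- which is exactly what your contradiction needs --- does not follow from anything you wrote. It is provable without dynamics (if $\ol{\widetilde{D}}$ met $\partial\Omega$, cocompactness of the $\Gamma$-action and local finiteness of the lifted family $\widetilde{\cF}$ would produce a nontrivial element of $\Gamma$ stabilizing $\widetilde{D}$, contradicting that $\widetilde{D}\to D$ is a homeomorphism onto a simply connected set), but that argument, or something like it, is the actual content of the step and is missing.

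Second, your resolution of the ``delicate'' annular case in $(c)\Rightarrow(e)$ is wrong in substance: you claim uniqueness of geodesic representatives \emph{forbids} a complementary annulus whose core is parallel to an element of $\cF$. Such annuli exist. For example, let $\beta$ be a separating simple closed geodesic on a genus-$2$ surface and let $\gamma_1,\gamma_2$ be simple closed geodesics meeting once inside one of the two one-holed tori bounded by $\beta$; then the complementary region of $\{\beta,\gamma_1,\gamma_2\}$ adjacent to $\beta$ on that side is an open annulus with core isotopic to $\beta$. Uniqueness of geodesic representatives only rules out an annulus bounded by two distinct \emph{smooth} closed geodesics; here one boundary is $\beta$ and the other is a broken curve of geodesic arcs, which is permitted. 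The good news is that no exclusion is needed: if the core $c$ of an annular region $R$ has geodesic representative $\beta\in\cF$, then since $c$ is disjoint from $\cF$ we get $i(\beta,\delta)=i(c,\delta)=0$ for every $\delta\in\cF$, and since geodesics realize geometric intersection numbers, $\beta$ itself is a simple closed geodesic meeting no element of $\cF$ transversely --- so $(c)$ fails with witness $\beta$. Your proof should be restructured to use this observation rather than to exclude the configuration. A smaller issue of the same flavor: your $(e)\Rightarrow(d)$ treats an essential curve violating $(d)$ as if it were disjoint from $\cF$, but such a curve could meet $\cF$ only tangentially (it is a curve, not a geodesic, so tangency does not force coincidence); the paper's route $(c)\Rightarrow(d)$ through geodesic representatives sidesteps exactly this point.
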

\begin{proof}
It is clear that $(a)$ implies $(b)$ and $(b)$ implies $(c)$. Let $g \subset S$ denote a closed essential curve on $S$, and $\ell \subset g$ denote a simple closed curve. Then $\ell$ is freely homotopic to a unique simple closed geodesic $\gamma$. By assumption, $\gamma$ must intersect $\cF$ transversely -- say $\gamma$ intersects $\beta \in \cF$. The theory of geometric intersection number forces $\ell$ to intersect $\beta$ and hence $g$ to intersect $\beta$. This proves $(c) \Rightarrow (d)$. For $(d) \Rightarrow (e)$, if $S \setminus \cF$ is not a disjoint union of disks, take a connected component $K$ of this complement that is not a disk and consider a closed curve in $K$ generating a nontrivial subgroup of $\pi_1(K)$. Lastly, $(e) \Rightarrow (a)$ follows from applying Proposition \ref{prop:geods_in_disk} below.
\end{proof} 

\begin{proposition}\label{prop:geods_in_disk}
Let $S$ be a closed convex projective surface and $D \subset S$ denote a disk. Then $D$ contains no complete nontrivial geodesic and no non-closed geodesic can enter through $\partial D$ and stay in $D$ thereafter.
\end{proposition}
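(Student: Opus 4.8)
The plan is to pass to the universal cover $\Omega \to S$ and exploit that a disk lifts homeomorphically: a geodesic trapped in $D$ would lift to a geodesic trapped in a \emph{compact} subset of $\Omega$, which is impossible because geodesics of $\Omega$ are distance-minimizing chords of unbounded length. Both clauses will come out of the same contradiction.

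First I would record the two facts that drive everything. Since $(\Omega,d_\Omega)$ is uniquely geodesic and the Hilbert metric is additive along a chord (the cross ratio splits as a product), every geodesic of $\Omega$ is an isometric embedding of an interval: for an arc-length parametrization $\tilde\gamma$ one has $d_\Omega(\tilde\gamma(s),\tilde\gamma(t)) = |s-t|$. Because the covering map $p\colon \Omega \to S$ is a local isometry, geodesics of $S$ lift to geodesics of $\Omega$, so this identity is available for the lift of any geodesic of $S$. Second, since the Hilbert topology agrees with the Euclidean subspace topology and $d_\Omega$ is continuous, every compact subset of $\Omega$ has finite $d_\Omega$-diameter.

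Next I would produce a compact lift of the disk. As $D$ is an open disk it is simply connected and locally path connected, and its closure $\bar D$ is compact in $S$; hence the inclusion lifts and $p^{-1}(\bar D)$ decomposes into disjoint homeomorphic copies of $\bar D$, each of which is therefore a compact subset of $\Omega$. To be safe with possible self-identifications of the bounding geodesic arcs of $\bar D$ in $S$, I would instead lift the characteristic map $\Delta \to S$ of the abstract closed disk $\Delta$; its image in $\Omega$ is compact as the continuous image of a compact set. Call such a copy $\overline{\tilde D}$, and let $M := \operatorname{diam}_{d_\Omega}(\overline{\tilde D}) < \infty$. With this in hand the first clause is immediate: a complete nontrivial geodesic $\gamma \subset D$ lifts (into the interior sheet) to $\tilde\gamma\colon \R \to \overline{\tilde D}$ with $d_\Omega(\tilde\gamma(0),\tilde\gamma(t)) = |t|$, which exceeds $M$ for large $t$. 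For the second clause, a non-closed geodesic entering through $\partial D$ and remaining in $D$ gives, using completeness of the Hilbert metric on the closed surface $S$, a forward ray $\eta\colon[0,\infty) \to \bar D$; its lift $\tilde\eta$ stays in $\overline{\tilde D}$ yet satisfies $d_\Omega(\tilde\eta(0),\tilde\eta(t)) = t \to \infty$, the same contradiction. (Note the argument uses only that the trapped ray has unbounded length inside a finite-diameter region, so it does not even need the escape-to-boundary picture.)

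I expect the main obstacle to be the lifting step rather than the metric estimate. One must ensure that the whole geodesic (resp.\ the forward ray) lifts into a single compact copy of $\bar D$ instead of jumping between sheets, and that $\bar D$ genuinely lifts to a compact set even when its boundary arcs are identified in $S$. Invoking simple connectivity to guarantee existence and uniqueness of the lift once a basepoint is fixed, together with lifting the characteristic map of the abstract closed disk, resolves this cleanly; after that, everything reduces to the distance-minimizing property of Hilbert geodesics against the finite diameter of $\overline{\tilde D}$.
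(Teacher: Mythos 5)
Your proof breaks at the compactness step, and the failure is not cosmetic. It is true that $D$, being simply connected, lifts homeomorphically to each component of $\pi^{-1}(D)$ under the covering $\pi\colon \Omega \to S$; but such a component need not be bounded in $(\Omega,d_\Omega)$, and the closure $\ol{D}$ need not lift at all. Concretely, already in the hyperbolic case $\Omega = \D^2$ (a special case of the paper's setting): let $\gamma$ be a simple closed geodesic of $S$ and let $\beta$ be a simple geodesic ray that spirals onto $\gamma$, so that $\ol{\beta} = \beta \cup \gamma$. Taking around each point $\beta(t)$ a ball of radius $r(t)$ equal to a small fraction of the distance from $\beta(t)$ to $\gamma$ and to the far-away strands of $\beta$, the union $N = \bigcup_{t} B(\beta(t),r(t))$ is an open, connected, simply connected embedded subsurface --- a disk --- containing the entire ray $\beta$. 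The component of $\pi^{-1}(N)$ containing a lift $\tilde\beta$ contains all of $\tilde\beta$, which has infinite length and converges to a point of $\partial\Omega$; so this lift of the disk has infinite Hilbert diameter and non-compact closure in $\Omega$, and your length-versus-diameter contradiction never materializes. Your fallback does not rescue it: $\ol{N}$ contains $\gamma$, which is essential in $S$, so $\pi_1(\ol{N}) \to \pi_1(S)$ is nontrivial and the inclusion $\ol{N} \hookrightarrow S$ admits no lift; moreover $\ol{N}$ is not the image of any characteristic map from a compact disk --- it is not even locally connected along $\gamma$ (infinitely many strands of the tube accumulate on each of its points), so by Hahn--Mazurkiewicz it is not a continuous image of a compact disk at all. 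The ``polygon with identified boundary arcs'' picture you invoke is valid only for special disks, such as complementary regions of a filling collection as in Lemma \ref{lem:characterization_of_filling}, and even there the boundedness of the lift is a fact requiring proof (an unbounded component would have nontrivial stabilizer in $\Gamma$, contradicting simple connectivity of the quotient), not a consequence of compactness of $\ol{D}$ in $S$.

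This spiral configuration is precisely the hard case of the proposition --- a geodesic trapped in a disk not because the disk is metrically small in the universal cover, but because the geodesic accumulates inside it --- and it is the case your argument cannot see, since no compact set in $\Omega$ is available. You correctly sensed that the lifting step was the main obstacle, but the obstacle is fatal to this strategy rather than a technicality. That is why the paper's proof is dynamical rather than metric: it extracts from the trapped non-closed geodesic an almost-recurrent orbit segment and applies the Anosov closing lemma (available because Benoist \cite{benoist} proved the Hilbert geodesic flow on a closed convex projective surface is Anosov) to produce a closed geodesic inside the disk, contradicting contractibility. Some recurrence or closing ingredient of this kind is unavoidable; as written, your argument establishes only the much weaker statement that no complete geodesic lies in a subset of $S$ whose lifts to $\Omega$ have finite Hilbert diameter.
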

\begin{proof}
We prove the former statement (the latter is argued very similarly). It is clear that $D$ cannot contain a nontrivial closed geodesic since it is contractible. Let $\alpha$ be a non-closed geodesic contained in $D$. Then there exists an $x \in \ol{\alpha} \setminus \alpha$ and so $x \in \ol{D}$. Without loss of generality, $x \in \Int D$ by expanding $D$ slightly (Figure \ref{fig:non_closed_in_disk} provides a picture of what could be happening). For the remainder of this proof, let $d$ denote the Hilbert metric on $S$. Let $\varepsilon = \inf\{d(x,y) : y \in \partial D\}$, which must be positive since $\partial D \subset S$ is compact because $S$ is compact, and so $B_\varepsilon(x) \subset D$.
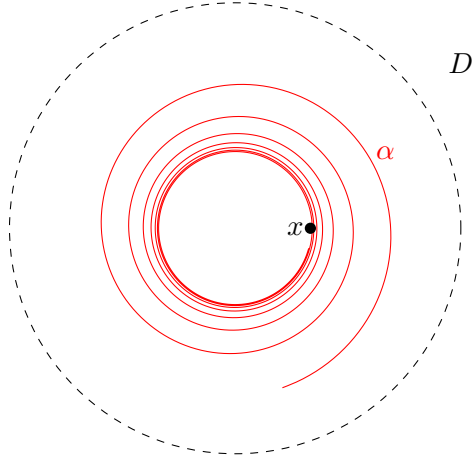
\begin{figure}[H]
\centering
\begin{tikzpicture}
\draw[dashed] (0,0) circle (3cm);
\node at (3,2.2) {$D$};
\draw [domain=5:50,variable=\t,smooth,samples=500,red]
        plot ({\t r}: {1+2*exp(-0.1*\t)});
\node[red] at (2,1) {$\alpha$};
\node at (0.8,0) {$x$};
\node at (1,0)[circle,fill,inner sep=1.5pt] {};
\end{tikzpicture}
\caption{Geodesic ray of a non-closed geodesic $\alpha \subset D$ with a limit point $x \in \Int D$}
\label{fig:non_closed_in_disk}
\end{figure}
We will now define the notion of a geodesic flow. Every unit (with respect to the norm associated to the Hilbert metric) tangent vector $v$ at a point $x \in \Omega$ completely characterizes a (complete) geodesic $r(u,v)$ in $\Omega$ passing through $x$ with $v$ parallel to it. For any $t \in \R$, we can consider the geodesic flow along $r(u,v)$ as follows: for every $t \in \R$, $(x_t,v_t)$ is the point in the unit tangent bundle of $\Omega$ given by $x_t \in r(x,v)$ at distance $t$ from $x$ and $v_t$ as the unit tangent vector such that $r(x_t,v_t) = r(x,v)$. The geodesic flow on our surface $S$ is simply the projection of the flow on $\Omega$. For a precise definition of geodesic flow on Hilbert geometries, see \cite{crampon2014geodesic}.

We will now need a powerful dynamical result, which guarantees the existence of a $\delta = \delta(\varepsilon) > 0$ with the following property: for any point $z \in S$ and number $\tau > \varepsilon$ with $d(z,\phi_\tau z) < \delta$, where $\phi_\tau$ denotes the geodesic flow on $S$ for time $\tau$, there exists a closed geodesic $\ell_0$ on $S$ such that the Hausdorff distance between the trajectory arc of time length $\tau$ with endpoints $z$ and $\phi_\tau z$ and the curve $\ell_0$ is less than $\varepsilon$. This result stated is one flavor of the \emph{Anosov closing lemma}, which can be applied since Benoist proved that the geodesic flow of the Hilbert metric on a closed convex projective surface is Anosov \cite{benoist}. See \cite[Lemma 4.1]{closing_lemma} for the general statement of this closing lemma.

Corresponding to $\varepsilon / 2$, we receive a $\delta > 0$ by the Anosov closing lemma. Let $\phi_t(y)$ be the geodesic flow along $\alpha$ for time $t$ from $y$. Since $x \in \ol{\alpha} \setminus \alpha$, we can pick $z \in \alpha$ and a $T > \varepsilon / 2$ such that (i) $d(z,\phi_T(z)) < \delta$; (ii) $z, \phi_T(z) \in B_{\varepsilon/2}(x)$; and (iii) $z, \phi_T(z)$ lie in some convex chart. Let $\ell$ denote the geodesic arc of $z$ and $\phi_T(z)$ along $\alpha$. By assumption, $\ell \subset B_{\varepsilon/2}(x) \subset D$ since $z,\phi_T(z)$ lie in a convex chart of our surface. Now, fix some $y \in \ell$ and let $w \in S$ such that $d(y,w) < \varepsilon / 2$. Then $d(x,w) \leq d(x,y) + d(y,w) < \varepsilon/2 + \varepsilon/2 = \varepsilon$. Hence, $w \in B_{\varepsilon}(x) \subset D$. Since $w$ was arbitrary, we have that for any $y \in \ell$, $B_{\varepsilon / 2}(y) \subset D$. By the Anosov closing lemma, we receive a periodic geodesic $\gamma$ such that $d_H(\gamma, \ell) < \varepsilon / 2$ where $d_H(\cdot,\cdot)$ denotes the Hausdorff distance. This tells us that $\gamma \subset \bigcup_{y \in \ell} B_{\varepsilon / 2}(y) \subset D$. This provides a contradiction since no closed, nontrivial geodesic can live in a disk.
\end{proof}

This next lemma proves a necessary and sufficient characterization of closed geodesics amongst arbitrary complete geodesics by filling collections.
\begin{lemma}\label{lem:characterization_of_closed_by_filling}
Let $S$ be a closed convex projective surface. Let $\cC$ be a filling collection on $S$ and $\alpha \subset S$ be an arbitrary geodesic such that $\alpha \not\in \cC$. Then $\alpha$ is closed if and only if $\alpha$ intersects elements of $\cC$ finitely many times.
\end{lemma}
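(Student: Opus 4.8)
The plan is to prove the two implications separately, reading ``$\alpha$ intersects $\cC$ finitely many times'' as ``$\alpha\cap\cC$ is a finite set of points,'' in keeping with the paper's convention of treating geodesics as sets.

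For the forward direction (closed $\Rightarrow$ finite) I would argue by compactness. A closed geodesic $\alpha$ has compact image, and since $\alpha\notin\cC$ it is distinct from each of the finitely many elements of $\cC$. Two distinct complete geodesics on $S$ meet transversally at every common point: a shared tangent direction at a point would, by uniqueness of the geodesic through a point in a given direction, force the two geodesics to coincide. Hence each point of $\alpha\cap\beta$ for $\beta\in\cC$ is an isolated transverse intersection, and isolated points of the compact set $\alpha$ are finite in number; summing over the finite collection $\cC$ shows $\alpha\cap\cC$ is finite.

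For the reverse direction I would argue the contrapositive: if $\alpha$ is non-closed then $\alpha\cap\cC$ is infinite. By Lemma \ref{lem:characterization_of_filling}, $S\setminus\cC$ is a disjoint union of open disks, finitely many since $S$ is compact. Parametrize $\alpha$ by arclength as $\alpha\colon\R\to S$ and set $Z=\{t:\alpha(t)\in\cC\}$; transversality makes $Z$ discrete in $\R$, and between consecutive elements of $Z$ the geodesic lies in a single complementary disk. If the forward ray $\alpha|_{[T,\infty)}$ met $\cC$ only finitely often, then after the last crossing it would enter some disk $D_i$ through $\partial D_i$ and remain in $D_i$ thereafter, contradicting Proposition \ref{prop:geods_in_disk}; the same applies to the backward ray. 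Thus $Z$ is infinite in both directions, so $\alpha$ crosses $\cC$ at infinitely many \emph{times}.

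The remaining and main step is to upgrade this to infinitely many distinct \emph{points} of $S$. I would pass to the universal cover: lift $\alpha$ to a complete geodesic $\tilde\alpha\subset\Omega$, an embedded chord with endpoints $a,b\in\partial\Omega$, and lift $\cC$ to a $\Gamma$-invariant family $\tilde\cC$ of chords. Since $\tilde\alpha$ is injective, the infinitely many crossing times in $Z$ produce infinitely many distinct crossing points of $\tilde\alpha$ with $\tilde\cC$ in $\Omega$. Were $\alpha\cap\cC$ finite, some point $q\in S$ would receive infinitely many of these, so the single chord $\tilde\alpha$ would contain infinitely many points of one orbit $\Gamma\tilde q_0$; proper discontinuity makes this orbit discrete in $\Omega$, so these points accumulate only at an endpoint of $\tilde\alpha$ on $\partial\Omega$, and I would then show that such accumulation forces $\tilde\alpha$ to be the translation axis of the associated element of $\Gamma$, whence $\alpha$ is closed, a contradiction. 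I expect this final distinctness step, ruling out a non-closed geodesic revisiting a single point infinitely often, to be the crux; an alternative to the universal-cover route is to observe that the unit tangent directions at such a repeated point accumulate, and to combine uniqueness of the geodesic flow with the Anosov closing lemma already invoked in Proposition \ref{prop:geods_in_disk} to again force periodicity.
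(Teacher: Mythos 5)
Your forward direction and the first stage of your reverse direction are correct and essentially the paper's: closedness gives finiteness by compactness and transversality, and a non-closed $\alpha$ must cross $\cC$ at infinitely many parameter times because Proposition \ref{prop:geods_in_disk} forbids it from eventually remaining in a complementary disk. The gap is in your final step, the one you yourself flag as the crux. Having reduced to a chord $\tilde\alpha \subset \Omega$ containing infinitely many points $g_n\tilde q_0$ of a single $\Gamma$-orbit accumulating at an endpoint $b\in\partial\Omega$, you assert that this accumulation ``forces $\tilde\alpha$ to be the translation axis of the associated element of $\Gamma$.'' But there is no associated element: the points involve infinitely many distinct $g_n$, and accumulation at $b$ by itself hands you no nontrivial deck transformation preserving $\tilde\alpha$. (The standard dynamical consequence of $g_n\tilde q_0\to b$ is only that, after passing to a subsequence, the maps $g_n$ converge locally uniformly to the constant map $b$ away from a repelling point; this is asymptotic information and never yields exact invariance of a chord.) Whether a chord through infinitely many points of one orbit must be an axis is precisely what needs proof, and nothing in your sketch supplies it. Your proposed alternative fails for a related reason: the Anosov closing lemma produces a nearby periodic orbit shadowing an almost-closed segment of $\alpha$; it cannot force $\alpha$ itself to be periodic, which would require two of the (necessarily pairwise distinct, when $\alpha$ is non-closed) tangent directions of $\alpha$ at the repeated point to coincide exactly, not merely to accumulate.

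What is missing is an exact repetition, and it comes from the disk structure you set up and then discarded; this is how the paper argues. Each arc of $\alpha$ between consecutive crossings lies in the closure of one of the finitely many complementary disks and has both endpoints in the finite set $\alpha\cap\cC$, so by pigeonhole two distinct such arcs lie in the same disk with the same endpoints; these are two distinct geodesic paths between the same points that are homotopic rel endpoints, contradicting the uniqueness of geodesic representatives in a relative homotopy class \cite[Proposition 3.1]{goldman_convex_compact}. Equivalently, upstairs: both arcs are segments of $\tilde\alpha$, and the element of $\Gamma$ matching one configuration (complementary region together with ordered endpoint pair) to the other must carry one segment exactly onto the other, by uniqueness of geodesic segments in the strictly convex $\Omega$; since two points determine a chord, this nontrivial element preserves $\tilde\alpha$, so $\tilde\alpha$ is an axis and $\alpha$ is closed, a contradiction. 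Your outline becomes a proof once this pigeonhole argument replaces the unproven ``accumulation implies axis'' claim.
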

\begin{proof}
Let $\alpha$ be a closed geodesic. The intersection of any two closed geodesics is finite and $\cC$ is a collection of finitely many geodesics; hence, the forward direction follows. Conversely, suppose $\alpha$ is non-closed and that $\alpha$ intersects $\cC$ finite many times. Proposition \ref{prop:geods_in_disk} tells us that $\alpha$ cannot live in some disk for all time after some point. Hence, $\alpha$ must contain at least two distinct geodesic paths between the same endpoints in some disk. This is not possible since every path in $S$ is homotopic relative to its endpoints to a unique geodesic path \cite[Proposition 3.1]{goldman_convex_compact}.
\end{proof}

\begin{proof}[Proof of Theorem \ref{thm:main}, $(a) \Rightarrow (b)$]
Let $\cC$ be a filling collection such that $f(\cC)$ consists of closed geodesics. Then necessarily $f(\cC)$ is filling since $f^{-1}$ is a map with the same properties as $f$. Let $\gamma$ be a closed geodesic. By Lemma \ref{lem:characterization_of_closed_by_filling}, $\gamma \cap \cC$ is finite. Since $f$ is a bijection, $f(\gamma) \cap f(\cC)$ is finite. Applying Lemma \ref{lem:characterization_of_closed_by_filling} again with respect to $f(\cC)$ tells us $f(\gamma)$ must be closed.
\end{proof}

\subsection{Preserving closed geodesics implies preservation of simple closed geodesics}\hrulefill
\label{sec:closed_implies_simple}

The main lemma needed to prove $(b)$ implies $(c)$ in Theorem \ref{thm:main} is the following:
\begin{lemma}\label{lem:intersect_simple_closed_but_not_simple}
Let $S$ be a closed convex projective surface. Let $\gamma,\alpha$ be distinct simple closed geodesics on $S$. Then there exists a closed geodesic $\beta \neq \gamma, \alpha$ and $\beta \cap \alpha \neq \emptyset$, $\beta \cap \gamma = \emptyset$.
\end{lemma}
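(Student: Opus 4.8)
The plan is to produce $\beta$ as a closed geodesic lying entirely inside one complementary component of $\gamma$, so that disjointness from $\gamma$ is automatic, and then to force it to meet $\alpha$ by a filling argument carried out in that component. Cut $S$ along $\gamma$ and let $\Sigma$ be a component of $S\setminus\gamma$ that $\alpha$ enters; such a component exists because $\alpha\neq\gamma$ forces $\alpha\not\subseteq\gamma$. Since $\gamma$ is essential and simple, each such component is an incompressible subsurface whose boundary consists of one or two copies of $\gamma$, and a short Euler-characteristic count (using $g\geq 2$) shows $\Sigma$ has genus at least one; in particular $\Sigma$ is neither a disk, an annulus, nor a pair of pants. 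Hence $\Sigma$ carries a finite collection $\cF_0$ of essential simple closed curves that fills it, i.e.\ cutting $\Sigma$ along $\cF_0$ together with $\partial\Sigma$ yields only disks (see \cite{primer}). Replacing each curve of $\cF_0$ by its geodesic representative, I may assume the members of $\cF_0$ are simple closed geodesics; since each is freely homotopic to a curve disjoint from $\gamma$ and geodesic representatives realize geometric intersection numbers, every element of $\cF_0$ is disjoint from $\gamma$ and lies in $\Sigma$, and the filling property is preserved because it depends only on the isotopy classes of the curves.

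Next I would analyze $\alpha\cap\Sigma$. If $\alpha\cap\gamma=\emptyset$, then $\alpha\subseteq\Sigma$ is an essential simple closed curve that is not peripheral, since distinct simple closed geodesics lie in distinct free homotopy classes and so $\alpha$ cannot be freely homotopic to $\gamma$ \cite{goldman_convex_compact}. If instead $\alpha$ crosses $\gamma$, then $\alpha\cap\Sigma$ is a union of arcs with endpoints on $\partial\Sigma$, and each such arc is essential: an inessential arc would cobound a bigon with a subarc of $\gamma$, producing two distinct geodesic arcs with the same endpoints that are homotopic relative to their endpoints, contradicting the uniqueness of geodesic representatives of paths \cite{goldman_convex_compact}. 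In either case a connected piece $P$ of $\alpha\cap\Sigma$ is an essential non-peripheral simple closed curve in $\Sigma$, or an essential arc in $\Sigma$.

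Because $\cF_0$ fills $\Sigma$, every essential non-peripheral simple closed curve and every essential arc of $\Sigma$ must cross some member of $\cF_0$ transversely; applying this to $P$ yields a geodesic $\beta\in\cF_0$ with $\beta\cap\alpha\neq\emptyset$. This $\beta$ is disjoint from $\gamma$ since $\beta\subset\Sigma$, whence $\beta\neq\gamma$, and $\beta\neq\alpha$ because $\beta$ meets $\alpha$ transversely in the curve case while $\alpha$ crosses $\gamma$ but $\beta$ does not in the arc case; thus $\beta$ is the desired geodesic. The step I expect to be most delicate is the passage to geodesic representatives inside the \emph{open} subsurface $\Sigma$: I must know that a closed curve disjoint from $\gamma$ has a geodesic representative that is again disjoint from $\gamma$, that these representatives still fill $\Sigma$, and that the arcs of $\alpha$ in $\Sigma$ are genuinely essential. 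All of these rest on the fact that on a closed convex projective surface geodesic representatives are unique and realize geometric intersection numbers (the no-bigon principle), which I would either cite as is done for the analogous statements elsewhere in the paper, or deduce from Proposition~\ref{prop:geods_in_disk} together with \cite{goldman_convex_compact}.
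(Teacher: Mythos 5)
Your proof is correct in substance, but it takes a genuinely different route from the paper. The paper argues by extending $\gamma$ (and $\alpha$, in the disjoint case) to a pants decomposition of $S$ and then constructs $\beta$ explicitly by a case analysis on how many pairs of pants the relevant curve bounds, concluding with the observation that a closed geodesic in a pair of pants avoiding the constructed curve would have to be freely homotopic to a boundary component. You instead cut along $\gamma$, take a filling system $\cF_0$ of geodesics in the complementary subsurface $\Sigma$, and observe that the trace of $\alpha$ in $\Sigma$ --- either $\alpha$ itself (essential and non-peripheral since distinct geodesics lie in distinct free homotopy classes) or a union of essential arcs (no geodesic bigons, by uniqueness of geodesic representatives of paths) --- must cross some member of $\cF_0$. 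This unifies the paper's two cases and avoids the figure-by-figure construction, at the cost of invoking two pieces of machinery the paper does not need: existence of filling systems on subsurfaces with boundary, and the fact that geodesic representatives of curves carried by a subsurface with geodesic boundary remain in that subsurface (which you correctly flag as the delicate point; it follows from convexity of the lifts of $\Sigma$ in $\Omega$ and realization of intersection numbers). One small imprecision: for a surface with boundary, a system of closed curves in the interior can never have complement consisting \emph{only} of disks --- the regions meeting $\partial\Sigma$ are boundary-parallel annuli --- so your stated definition of filling is vacuous as written; the property you actually use, that every essential non-peripheral simple closed curve and every essential arc has positive geometric intersection number with $\cF_0$, is the correct formulation and makes the rest of your argument go through, since that property is isotopy-invariant and geodesics realize intersection numbers.
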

\begin{proof}
If $\alpha$ and $\gamma$ are disjoint, then we can extend these curves to a pair of pants decomposition $\gamma_1 = \gamma, \gamma_2 = \alpha, \gamma_3,\ldots, \gamma_k$ of simple closed geodesics. Now, $\alpha$ serves as either the boundary of two pairs of pants or just one. Closed curves that intersect $\alpha$ and not $\gamma$ are constructed in Figure \ref{fig:alpha_gamma_disjoint}; necessarily the closed geodesic $\beta$ in the free homotopy class of these closed curves will have the same intersection relations with $\alpha$ and $\gamma$.
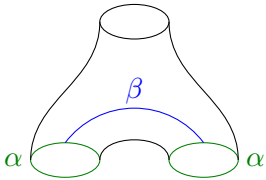
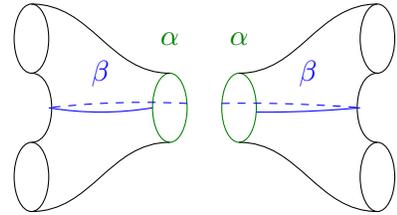
\begin{figure}[H]
\begin{subfigure}{0.4\textwidth}
\centering
    \begin{tikzpicture}[scale=0.92][xshift=-1cm]
    \draw (0,0) ellipse (.5 and .25);
    \draw[green!50!black] (-1,-2) ellipse (.5 and .25);
    \draw[green!50!black] (1,-2) ellipse (.5 and .25);
    \draw (-.5,0) to[out=-90,in=90] (-1.5,-2);
    \draw (.5,0) to[out=-90,in=90] (1.5,-2);
    \draw (-.5,-2) to[out=90,in=90] (.5,-2);
    \node[green!50!black] at (-1.75,-2) {$\alpha$};
    \node[green!50!black] at (1.75,-2) {$\alpha$};
    \draw[hobby,blue] plot coordinates {(-1,-1.75) (0,-1.25) (1,-1.75)};
    \node[blue] at (0,-1) {$\beta$};
    \end{tikzpicture}
\caption{Case 1: $\alpha$ is the boundary of one pant}
\end{subfigure}%
\hfill
\begin{subfigure}{0.4\textwidth}
\centering
\begin{tikzpicture}[scale=0.92]
    \begin{scope}[rotate=-90]
    \draw[green!50!black] (0,0) ellipse (.5 and .25);
    \draw (-1,-2) ellipse (.5 and .25);
    \draw (1,-2) ellipse (.5 and .25);
    \draw (-.5,0) to[out=-90,in=90] (-1.5,-2);
    \draw (.5,0) to[out=-90,in=90] (1.5,-2);
    \draw (-.5,-2) to[out=90,in=90] (.5,-2);
    \node[green!50!black] at (-1,0) {$\alpha$};
    \node[blue] at (-0.5,-1) {$\beta$};
    \draw[hobby,blue,dashed] plot coordinates {(-0.0625,0.25) (-0.0625,-1) (0,-1.75)};
    \draw[hobby,blue] plot coordinates {(0,-0.25) (0.0625,-1) (0, -1.75)};
    \end{scope}
    \begin{scope}[xshift=1cm,rotate=90]
    \draw[green!50!black] (0,0) ellipse (.5 and .25);
    \draw (-1,-2) ellipse (.5 and .25);
    \draw (1,-2) ellipse (.5 and .25);
    \draw (-.5,0) to[out=-90,in=90] (-1.5,-2);
    \draw (.5,0) to[out=-90,in=90] (1.5,-2);
    \draw (-.5,-2) to[out=90,in=90] (.5,-2);
    \node[green!50!black] at (1,0) {$\alpha$};
    \node[blue] at (0.5,-1) {$\beta$};
    \draw[hobby,blue,dashed] plot coordinates {(0.0625,0.25) (0.0625,-0.5) (0,-1.75)};
    \draw[hobby,blue] plot coordinates {(-0.0625,-0.25) (-0.0625,-0.5) (0, -1.75)};
    \end{scope}
    \end{tikzpicture}
\caption{Case 2: $\alpha$ is the boundary of two pants}
\end{subfigure}%
\caption{Construction of a closed geodesic $\beta$ such that $\beta$ intersects $\alpha$ but not $\gamma$ if $\alpha \cap \gamma = \emptyset$}
\label{fig:alpha_gamma_disjoint}
\end{figure}
If $\alpha$ and $\gamma$ are not disjoint, then extend $\gamma$ to a pair of pants $\gamma_1 = \gamma, \gamma_2,\ldots,\gamma_k$. If $\alpha$ intersects some $\gamma_i$ for $i \neq 1$, then just take $\beta = \gamma_i$. Otherwise, $\alpha \cap \gamma_i = \emptyset$ for all $i \neq 1$. We treat the individual cases of whether $\gamma$ bounds 1 or 2 pants in the decomposition. Construction of the closed geodesic $\beta$ is denoted in Figure \ref{fig:alpha_gamma_intersect} for each scenario.
\begin{figure}[H]
\begin{subfigure}{0.5\textwidth}
\centering
    \begin{tikzpicture}[scale=0.92]
    \draw (0,0) ellipse (.5 and .25);
    \draw[green!50!black] (-1,-2) ellipse (.5 and .25);
    \draw[green!50!black] (1,-2) ellipse (.5 and .25);
    \draw (-.5,0) to[out=-90,in=90] (-1.5,-2);
    \draw (.5,0) to[out=-90,in=90] (1.5,-2);
    \draw (-.5,-2) to[out=90,in=90] (.5,-2);
    \node[green!50!black] at (-1.75,-2) {$\gamma$};
    \node[green!50!black] at (1.75,-2) {$\gamma$};
    \draw[hobby,blue,dashed] plot coordinates {(-0.0625,0.25) (-0.0625,-1) (0,-1.75)};
    \draw[hobby,blue] plot coordinates {(0,-0.25) (0.0625,-1) (0, -1.75)};
    \node[blue] at (-0.5,-1) {$\beta$};
    \end{tikzpicture}
\caption{Case 1: $\gamma$ is the boundary of one pant}
\end{subfigure}%
\hfill
\begin{subfigure}{0.5\textwidth}
\centering
    \begin{tikzpicture}[scale=0.92]
    \draw[green!50!black] (0,0) ellipse (.5 and .25);
    \draw[red] (-1,-2) ellipse (.5 and .25);
    \draw[red] (1,-2) ellipse (.5 and .25);
    \draw (-.5,0) to[out=-90,in=90] (-1.5,-2);
    \draw (.5,0) to[out=-90,in=90] (1.5,-2);
    \draw (-.5,-2) to[out=90,in=90] (.5,-2);
    \node[green!50!black] at (-1,0) {$\gamma$};
    \draw[hobby,blue] plot coordinates {(-1,-1.75) (0,-1.25) (1,-1.75)};
    \node[blue] at (0,-1) {$\beta$};
    \end{tikzpicture}
\caption{Case 2: $\gamma$ is the boundary of two pants and other two boundary components are identified}
\end{subfigure}%
\hfill\vspace{2em}
\begin{subfigure}{0.8\textwidth}
\centering
    \begin{tikzpicture}[scale=0.92]
    \begin{scope}[rotate=90]
    \draw[green!50!black] (0,0) ellipse (.5 and .25);
    \draw[red] (-1,-2) ellipse (.5 and .25);
    \draw[red,dashed] (1,-2) ellipse (.5 and .25);
    \draw (-.5,0) to[out=-90,in=90] (-1.5,-2);
    \draw (.5,0) to[out=-90,in=90] (1.5,-2);
    \draw (-.5,-2) to[out=90,in=90] (.5,-2);
    \node[green!50!black] at (1,0) {$\gamma$};
    \draw[hobby,blue] plot coordinates {(-1,-1.75) (0,-1.25) (1,-1.75)};
    \node[blue] at (0,-1) {$\beta$};
    \end{scope}
    \begin{scope}[xshift=5cm,rotate=-90]
    \draw (0,0) ellipse (.5 and .25);
    \draw[red,dashed] (-1,-2) ellipse (.5 and .25);
    \draw[red] (1,-2) ellipse (.5 and .25);
    \draw (-.5,0) to[out=-90,in=90] (-1.5,-2);
    \draw (.5,0) to[out=-90,in=90] (1.5,-2);
    \draw (-.5,-2) to[out=90,in=90] (.5,-2);
    \draw[hobby,blue] plot coordinates {(-1,-1.75) (0,-1.25) (1,-1.75)};
    \node[blue] at (0,-1) {$\beta$};
    \end{scope}
    \end{tikzpicture}
\caption{Case 3: $\gamma$ is the boundary of two pants and other two boundary components are not identified}
\end{subfigure}%
\caption{Construction of a closed geodesic $\beta$ such that $\beta$ intersects $\alpha$ but not $\gamma$ if $\alpha \cap \gamma \neq \emptyset$, $\alpha \cap \gamma_i =\emptyset$ for $i \neq 1$}
\label{fig:alpha_gamma_intersect}
\end{figure}
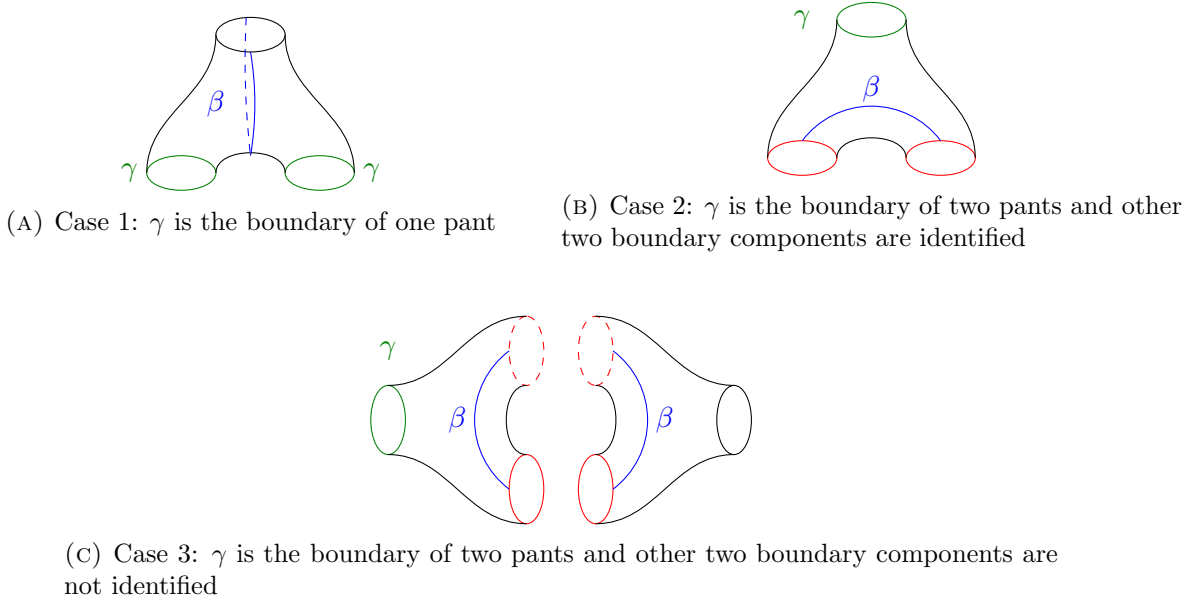
Now, $\beta$ must necessarily intersect $\alpha$ because otherwise, $\alpha$ would have to be freely homotopic to (a power of) one of the boundary components of our pant and by assumption, this cannot happen.
\end{proof}

\begin{corollary}\label{cor:intersect_closed_but_not_simple}
Let $S$ be a closed convex projective surface. Let $\gamma,\alpha$ be distinct closed geodesics on $S$ where $\gamma$ is simple. Then there exists a closed geodesic $\beta \neq \gamma, \alpha$ and $\beta \cap \alpha \neq \emptyset$, $\beta \cap \gamma = \emptyset$.
\end{corollary}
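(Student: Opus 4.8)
The plan is to reduce to Lemma \ref{lem:intersect_simple_closed_but_not_simple} whenever possible and to treat the genuinely new situation---$\alpha$ non-simple---by a pair-of-pants argument. First, if $\alpha$ is simple, then $\gamma$ and $\alpha$ are distinct simple closed geodesics and Lemma \ref{lem:intersect_simple_closed_but_not_simple} produces the desired $\beta$ at once. So I would assume $\alpha$ is non-simple. A useful preliminary observation is that a non-simple closed geodesic is never freely homotopic to a simple closed geodesic, since geodesic representatives are unique in each free homotopy class \cite[Proposition 3.1]{goldman_convex_compact}; hence $\alpha$ is \emph{non-peripheral} in every subsurface whose boundary consists of simple closed geodesics.

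Next I would extend the simple closed geodesic $\gamma = \gamma_1$ to a pants decomposition $\gamma_1,\ldots,\gamma_k$ of $S$ by simple closed geodesics. If $\alpha$ meets some $\gamma_i$ with $i \geq 2$ (necessarily transversely, as distinct geodesics cannot be tangent), then $\beta = \gamma_i$ already works: it is disjoint from $\gamma_1 = \gamma$, meets $\alpha$, and is distinct from $\gamma$ and, being simple, from the non-simple $\alpha$. The remaining case is that $\alpha$ is disjoint from $\gamma_2,\ldots,\gamma_k$. Here I would cut $S$ along $\gamma$ to obtain a (possibly disconnected) surface $S'$ with geodesic boundary, for which $\gamma_2,\ldots,\gamma_k$ is a pants decomposition. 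Since $\alpha$ avoids all of $\gamma_2,\ldots,\gamma_k$, the image of $\alpha$ in $S'$ is a disjoint union of closed geodesics and geodesic arcs, each contained in the interior of a single pair of pants $P$ of $S'$. I would pick one such piece $\omega$ and check it is essential in $P$: closed pieces only arise when $\alpha$ is disjoint from $\gamma$, forcing $\omega = \alpha$ non-peripheral by the observation above, while an arc piece is essential because a boundary-parallel geodesic arc would bound a bigon with the geodesic boundary, contradicting unique geodesics in $\Omega$.

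Finally I would take $\beta$ to be the $S$-geodesic representative of a non-peripheral free homotopy class supported in $P$, chosen distinct from $\alpha$ (possible since a pants carries infinitely many non-peripheral classes). The proof then rests on two facts that must be assembled, and their combination is the main obstacle. The first is a convexity/realization step: the class of $\beta$ has zero geometric intersection number with each boundary geodesic of $P$ (it has a representative in $\Int P$), and since geodesic representatives realize geometric intersection number, $\beta$ lies in $\Int P$ and is therefore disjoint from $\gamma$. The second is topological: a non-peripheral closed geodesic fills a pair of pants---the only essential subsurfaces of a pants are peripheral annuli, so the complement of $\beta$ consists of disks and peripheral annuli---whence $\beta$ crosses every essential object in $P$, in particular the piece $\omega \subset \alpha$. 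Together these give $\beta \cap \alpha \neq \emptyset$, $\beta \cap \gamma = \emptyset$, and $\beta \neq \gamma,\alpha$. I expect the delicate point to be the realization step: one must rule out the topologically-constructed $\beta$ wandering out of $P$ and accidentally meeting $\gamma$, which is exactly where the fact that subsurfaces bounded by closed geodesics behave convexly with respect to intersection numbers is used.
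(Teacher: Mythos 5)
Your proposal is correct in outline, but it takes a genuinely different route from the paper. The paper disposes of the non-simple case by a short reduction: since $\alpha$ is a closed geodesic, it contains a simple closed sub-curve $g \subset \alpha$, whose geodesic representative $\alpha'$ is a simple closed geodesic that can be chosen distinct from $\gamma$; applying Lemma \ref{lem:intersect_simple_closed_but_not_simple} to the pair $(\gamma,\alpha')$ produces $\beta$ with $\beta\cap\alpha'\neq\emptyset$ and $\beta\cap\gamma=\emptyset$, and since geodesic representatives realize geometric intersection number, $\beta$ must also meet $g\subset\alpha$. In other words, the paper transfers the problem from $\alpha$ to a simple ``shadow'' $\alpha'$ of $\alpha$ and reuses the Lemma's pants construction wholesale, whereas you invoke the Lemma only when $\alpha$ is simple and otherwise attack the non-simple $\alpha$ head-on, locating a piece of it inside a single pair of pants $P$ and manufacturing $\beta$ there. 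Your route yields a more explicit $\beta$ (it lives in $\Int P$, so disjointness from $\gamma$ is visible), but it is heavier: it requires (i) the convex-realization fact that the geodesic representative of a class supported in a geodesically-bounded subsurface stays in that subsurface---in the Hilbert setting this is proved by lifting to $\Omega$, where the relevant component of the preimage of $P$ is an intersection of half-planes bounded by lifts of the boundary geodesics, hence convex, and therefore contains the axis---and (ii) the fact that a non-peripheral closed geodesic fills a pair of pants. The paper's transfer trick sidesteps both, needing only that geodesics realize intersection numbers.

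Two refinements are needed to make your sketch rigorous, though neither is a fatal gap. First, in the realization step, zero geometric intersection number with the boundary geodesics of $P$ only gives that $\beta$ is \emph{disjoint} from them; to conclude that $\beta$ lies in $\Int P$, rather than in some other complementary component of those curves, you genuinely need the convexity argument you allude to at the end---intersection numbers alone do not suffice. Second, in the filling step, note that every essential \emph{simple} closed curve in a pair of pants is peripheral, so your non-peripheral $\beta$ is automatically non-simple, and the pieces $\omega$ of $\alpha$ may be non-simple as well; hence the clean ``cut along $\omega$ and observe $\beta$ lands in an annulus'' argument does not literally apply to immersed objects. One should instead argue via filled subsurfaces (the subsurface filled by $\beta$, or by $\omega$ together with the boundary circles it touches, has essential boundary, hence is isotopic to all of $P$ because the only essential subsurfaces of a pants are peripheral annuli), or by lifting to $\Omega$ and using unique geodesicity to rule out the disk and peripheral-annulus positions. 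These are standard arguments, so this is a matter of care in the write-up rather than a missing idea.
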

\begin{proof}
Since $\alpha$ is a closed geodesic, there is some simple closed curve $g \subset \alpha$ and $g$ is freely homotopic to a simple closed geodesic $\alpha'$ that is distinct as a set to $\gamma$ (if every simple closed curve of $\alpha$ is freely homotopic to $\gamma$ then $\alpha$ and $\gamma$ are the same as sets). Any closed geodesic intersecting $\alpha'$ must intersect $g$ and hence $\alpha$ by the theory of geometric intersection numbers. Thus, it suffices to show that there exists a closed geodesic intersecting $\alpha'$ but not $\gamma$. This is the content of Lemma \ref{lem:intersect_simple_closed_but_not_simple}.
\end{proof}

\begin{proof}[Proof of Theorem \ref{thm:main}, $(b) \Rightarrow (c)$]
Let $\gamma$ be a simple closed geodesic. If $f(\gamma)$ is not simple, then there exists a simple closed curve $g \subset f(\gamma)$ and $g$ is freely homotopic to a unique simple closed geodesic $\gamma'$. 
\begin{figure}[H]
\centering
\begin{tikzpicture}[scale=0.95]
\begin{scope}
\draw[smooth] (0,1) to[out=30,in=150] (2,1) to[out=-30,in=210] (3,1) to[out=30,in=150] (5,1) to[out=-30,in=30] (5,-1) to[out=210,in=-30] (3,-1) to[out=150,in=30] (2,-1) to[out=210,in=-30] (0,-1) to[out=150,in=-150] (0,1);
\draw[smooth] (0.4,0.1) .. controls (0.8,-0.25) and (1.2,-0.25) .. (1.6,0.1);
\draw[smooth] (0.5,0) .. controls (0.8,0.2) and (1.2,0.2) .. (1.5,0);
\draw[smooth] (3.4,0.1) .. controls (3.8,-0.25) and (4.2,-0.25) .. (4.6,0.1);
\draw[smooth] (3.5,0) .. controls (3.8,0.2) and (4.2,0.2) .. (4.5,0);
\node[red] at (0, 0.5) {$\gamma$};
\draw[red] (0, 0) arc(180:0:1 and 0.5);
\draw[red] (0, 0) arc(-180:0:1 and 0.5);
\end{scope}
\begin{scope}[xshift=5cm]
\draw[line width=0.8pt, ->] (1,0) -- (2,0);
\node at (1.5, 0.5) {$f$};
\end{scope}
\begin{scope}[xshift=8cm]
\draw[smooth] (0,1) to[out=30,in=150] (2,1) to[out=-30,in=210] (3,1) to[out=30,in=150] (5,1) to[out=-30,in=30] (5,-1) to[out=210,in=-30] (3,-1) to[out=150,in=30] (2,-1) to[out=210,in=-30] (0,-1) to[out=150,in=-150] (0,1);
\draw[smooth] (0.4,0.1) .. controls (0.8,-0.25) and (1.2,-0.25) .. (1.6,0.1);
\draw[smooth] (0.5,0) .. controls (0.8,0.2) and (1.2,0.2) .. (1.5,0);
\draw[smooth] (3.4,0.1) .. controls (3.8,-0.25) and (4.2,-0.25) .. (4.6,0.1);
\draw[smooth] (3.5,0) .. controls (3.8,0.2) and (4.2,0.2) .. (4.5,0);

\draw[blue] (3, 0) arc(180:0:1 and 0.5);
\draw[blue] (3, 0) arc(-180:0:1 and 0.5);
\node[blue] at (4, -0.75) {$\gamma'$};

\coordinate (A) at (2.5,0);
\coordinate (A') at (3,0.6);
\coordinate (A'') at (3,-0.6);
\coordinate (B) at (3.75,1);
\coordinate (C) at (5.2,0);
\coordinate (D) at (3.75,-1);

\draw[closed hobby,red] plot coordinates {(-0.2,0) (1.25,1) (2.5,0) (3.75,-1) (5.2,0) (3.75,1) (1.25,-1)};
\node[red] at (1.25,0.5) {$f(\gamma)$};

\draw[green!50!black,thick,dashed] (A) -- (A) to [curve through={ (A') (B) (C) (D) (A'')}] (A);
\node[green!50!black,thick] at (5.5,1) {$g$};
\end{scope}
\end{tikzpicture}
\caption{If $f(\gamma)$ is not simple, there exists a simple closed geodesic $\gamma'$ such that any closed geodesic intersecting $\gamma'$ intersects $f(\gamma)$ transversely.}
\label{fig:closed_to_simple}
\end{figure}
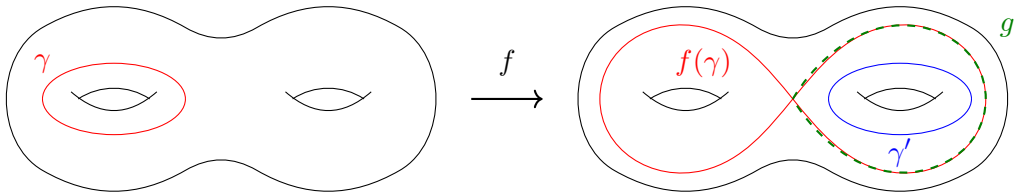
The key fact is that any closed geodesic intersecting $\gamma'$ must intersect $g$ and hence $f(\gamma)$ because of geometric intersection numbers being realized by geodesics. By assumption, $f^{-1}(\gamma')$ is a closed geodesic not equal to $\gamma$ and any closed geodesic intersecting $f^{-1}(\gamma')$ intersects $\gamma$ transversely. Corollary \ref{cor:intersect_closed_but_not_simple} tells us that this cannot happen.
\end{proof}

\subsection{Preserving simple closed geodesics implies preservation of some filling collection}\hrulefill
\label{sec:simple_implies_filling}

\begin{proof}[Proof of Theorem \ref{thm:main}, $(c) \Rightarrow (a)$]
It suffices to show that there exists a simple filling collection (recall Definition \ref{defn:filling}). Let $\gamma_1,\ldots,\gamma_k$ denote a disjoint collection of simple closed geodesics providing a pants decomposition of the surface $S$. 
\begin{figure}[H]
\centering
\begin{tikzpicture}[scale=0.92]
\begin{scope}[rotate=-90]
\draw (0,0) ellipse (.5 and .25);
\draw (-1,-2) ellipse (.5 and .25);
\draw (1,-2) ellipse (.5 and .25);
\draw (-.5,0) to[out=-90,in=90] (-1.5,-2);
\draw (.5,0) to[out=-90,in=90] (1.5,-2);
\draw (-.5,-2) to[out=90,in=90] (.5,-2);
\node at (-1,0) {$\gamma_i$};
\node[blue] at (-0.5,-1) {$\beta$};
\draw[hobby,blue,dashed] plot coordinates {(-0.0625,0.25) (-0.0625,-1) (0,-1.75)};
\draw[hobby,blue] plot coordinates {(0,-0.25) (0.0625,-1) (0, -1.75)};
\end{scope}
\begin{scope}[xshift=1cm,rotate=90]
\draw (0,0) ellipse (.5 and .25);
\draw (-1,-2) ellipse (.5 and .25);
\draw (1,-2) ellipse (.5 and .25);
\draw (-.5,0) to[out=-90,in=90] (-1.5,-2);
\draw (.5,0) to[out=-90,in=90] (1.5,-2);
\draw (-.5,-2) to[out=90,in=90] (.5,-2);
\node at (1,0) {$\gamma_i$};
\node[blue] at (0.5,-1) {$\beta$};
\draw[hobby,blue,dashed] plot coordinates {(0.0625,0.25) (0.0625,-0.5) (0,-1.75)};
\draw[hobby,blue] plot coordinates {(-0.0625,-0.25) (-0.0625,-0.5) (0, -1.75)};
\end{scope}
\end{tikzpicture}
\caption{Extending a pair of pants collection of geodesics to a filling collection}
\label{fig:extending_pants_to_filling}
\end{figure}
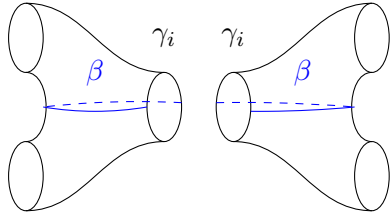
For each pant, consider the curve $\beta$ as in Figure \ref{fig:extending_pants_to_filling}. This curve will be freely homotopic to some simple closed geodesic. Considering all these geodesics for all the pants along with all the $\gamma_j$, we claim that this collection will be filling. This follows because any simple closed geodesic in the complement of our collection on the surface must be freely homotopic to one of our closed curves in the collection. Now apply Lemma \ref{lem:characterization_of_filling}.
\end{proof}

\section{Geodesic laminations}\label{sec:geodesic_laminations}

\begin{definition}[Geodesic laminations]
Let $S$ be a closed convex projective surface. A \emph{geodesic lamination} $\lambda$ of $S$ is a set of disjoint simple (not necessarily closed) complete geodesics in $S$ whose union is a closed set of $S$. Each geodesic in $\lambda$ is called a \emph{leaf}. A \emph{minimal lamination} is a geodesic lamination containing no proper non-empty sublamination.
\end{definition}
As an example, any finite collection of disjoint, simple closed geodesics on $S$ defines a lamination. Another example is provided in Figure \ref{fig:geod_lamination}. Basic results of such objects and their role in understanding surfaces is discussed in \cite[\S 8.3, 8.4]{martelli}.

\begin{figure}[H]
\centering
\begin{tikzpicture}[scale=0.95]
\draw[smooth] (0,1) to[out=30,in=150] (2,1) to[out=-30,in=210] (3,1) to[out=30,in=150] (5,1) to[out=-30,in=30] (5,-1) to[out=210,in=-30] (3,-1) to[out=150,in=30] (2,-1) to[out=210,in=-30] (0,-1) to[out=150,in=-150] (0,1);
\draw[smooth] (0.4,0.1) .. controls (0.8,-0.25) and (1.2,-0.25) .. (1.6,0.1);
\draw[smooth] (0.5,0) .. controls (0.8,0.2) and (1.2,0.2) .. (1.5,0);
\draw[smooth] (3.4,0.1) .. controls (3.8,-0.25) and (4.2,-0.25) .. (4.6,0.1);
\draw[smooth] (3.5,0) .. controls (3.8,0.2) and (4.2,0.2) .. (4.5,0);

\draw[hobby,red] plot coordinates {(5.5, 0.275) (5.25,0.4) (3,0.5) (1,0.5) (-0.25,0.4) (-0.5,0.275)};
\foreach \x in {0.125,0.0625,0.03125,0.015625} {
    \draw[red] (-0.5,\x) arc(180:0:0.51 and 0.2);
    \draw[red] (5.5,\x) arc(180:0:-0.51 and 0.2);
}
\draw[blue,line width=1pt] (-0.5,-0.025) arc(180:0:0.51 and 0.2);
\draw[green!50!black,line width=1pt] (5.5,-0.025) arc(180:0:-0.51 and 0.2);
\end{tikzpicture}
\caption{Example of a geodesic lamination with non-closed geodesics}
\label{fig:geod_lamination}
\end{figure}
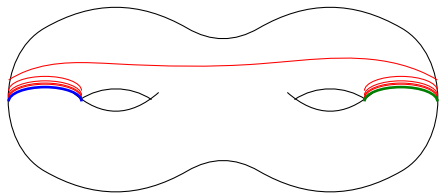

Since the closure of any leaf $\ell \subset \lambda$ is a sub-lamination of $\lambda$, any $\lambda$ is a minimal lamination if and only if each leaf in $\lambda$ is dense in $\lambda$. Precisely, a minimal lamination can only be a simple closed geodesic or a collection of uncountably many non-closed geodesics, each of which is dense in the lamination \cite[Lemma I.4.2.2]{canary2006fundamentals}.

\begin{lemma}\label{lem:minimal_lams_to_minimal_lams}
Let $S$ be a closed convex projective surface. Let $h\colon S \to S$ be a bijection, mapping geodesic laminations to geodesic laminations that induces a bijection on the set of laminations of $S$. Then $h$ sends minimal laminations to minimal laminations.
\end{lemma}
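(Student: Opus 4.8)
The plan is to recognize that minimality is an entirely inclusion-theoretic property of laminations, so the lemma should follow formally from the fact that $h$ (and its inverse) carries laminations to laminations while preserving containment of point sets. Concretely, the discussion just before the lemma records that a lamination $\lambda$ is minimal precisely when it contains no proper non-empty sublamination, and a sublamination of $\lambda$ is exactly a lamination $\mu$ with $\mu \subseteq \lambda$ as subsets of $S$. Thus I would reduce the statement to: if $\lambda$ admits no lamination $\mu$ with $\varnothing \neq \mu \subsetneq \lambda$, then $h(\lambda)$ admits none either.

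First I would establish that $h$ preserves the sublamination relation in both directions. Since $h$ is a bijection of the point set $S$, for any two laminations $\mu \subseteq \lambda$ we have $h(\mu) \subseteq h(\lambda)$, with the containment proper whenever $\mu \subsetneq \lambda$ (by injectivity), and symmetrically for $h^{-1}$. The hypothesis that $h$ induces a bijection on the set of laminations means exactly that both $h$ and $h^{-1}$ send laminations to laminations. Combining these, $\mu$ is a proper non-empty sublamination of $\lambda$ if and only if $h(\mu)$ is a proper non-empty sublamination of $h(\lambda)$.

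With this in hand the conclusion is a short contradiction argument. Suppose $\lambda$ is minimal but $h(\lambda)$ is not, so there is a proper non-empty sublamination $\mu \subsetneq h(\lambda)$. Then $h^{-1}(\mu)$ is a lamination, and by the previous paragraph $\varnothing \neq h^{-1}(\mu) \subsetneq \lambda$, contradicting the minimality of $\lambda$. Hence $h(\lambda)$ is minimal, and running the same argument with $h^{-1}$ in place of $h$ gives the converse, so $h$ restricts to a bijection between minimal laminations.

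The main point to be careful about is that nothing deeper than bijectivity is needed once minimality is phrased as the absence of proper non-empty sublaminations: the genuine inputs are only that $h$ and $h^{-1}$ both carry laminations to laminations (guaranteed by the ``induces a bijection on the set of laminations'' hypothesis, exactly as the analogous assumption is used for geodesics in Theorem \ref{thm:main}) and that point-set inclusion is preserved by a point bijection. I do not expect a serious obstacle. If one instead wished to read ``sublamination'' as a subcollection of leaves, one would first observe that any lamination contained in $\lambda$ is automatically a union of leaves of $\lambda$ (using that the support of a geodesic lamination is nowhere dense, so a leaf contained in $\lambda$ cannot cross the local leaves of $\lambda$), after which the argument proceeds identically.
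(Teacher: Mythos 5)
Your proof is correct and follows essentially the same route as the paper's: both argue by contradiction, taking a proper non-empty sublamination of $h(\lambda)$ and applying $h^{-1}$ (which has the same properties as $h$ by the induced-bijection hypothesis) to contradict the minimality of $\lambda$. Your write-up simply makes explicit the point-set bookkeeping that the paper leaves implicit.
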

\begin{proof}
Let $\lambda$ be a minimal lamination. Suppose $h(\lambda)$ is not a minimal lamination. Then there exists a proper sub-lamination $\lambda' \subsetneq h(\lambda)$. Since $h^{-1}$ is a map with the same properties as $h$, we would have a contradiction to the minimality of $\lambda$ upon applying $h^{-1}$.
\end{proof}

\begin{lemma}\label{lem:laminations_implies_simple}
Let $S$ be a closed convex projective surface. Suppose $h\colon S \to S$ be a bijection, mapping geodesic laminations to geodesic laminations that induces a bijection on the set of laminations of $S$. Then $h$ sends simple closed geodesics to simple closed geodesics.
\end{lemma}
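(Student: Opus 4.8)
The plan is to reduce the statement to excluding one case of the dichotomy for minimal laminations, and then to obtain the contradiction from a pointwise intersection count, which a set bijection automatically preserves. This avoids having to detect the ``single leaf'' property abstractly, which is the subtle point.

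First I would record that a simple closed geodesic $\gamma$ is itself a geodesic lamination: it is a single closed leaf, so its union is closed, and it is minimal since it has no nonempty proper sublamination. Applying Lemma \ref{lem:minimal_lams_to_minimal_lams} to $h$ shows that $h(\gamma)$ is a minimal lamination, and by the dichotomy recalled above it is either a simple closed geodesic or an uncountable union of pairwise disjoint non-closed leaves, each dense in $h(\gamma)$. The whole content of the lemma is therefore to exclude the second alternative.

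To do this I would exploit that $h$ is a bijection of the underlying set $S$, so for any two laminations $A,B$ one has $h(A\cap B)=h(A)\cap h(B)$, and in particular $|h(A)\cap h(B)|=|A\cap B|$ as a count of points of $S$. I would then pick an essential simple closed geodesic $\epsilon$ crossing $\gamma$; such an $\epsilon$ exists for every essential $\gamma$ on a closed surface of genus $g\ge 2$, whether $\gamma$ is separating or not. Since two distinct closed geodesics meet in finitely many points, $k:=|\gamma\cap\epsilon|$ is finite with $k\ge 1$, whence $|h(\gamma)\cap h(\epsilon)|=k$ is finite and positive. Note that $h(\epsilon)$ is again a (minimal) lamination, distinct from $h(\gamma)$ by injectivity of $h$.

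Now suppose, for contradiction, that $\mu:=h(\gamma)$ is of the uncountable type. I claim that $\mu$ meets every geodesic lamination $\beta$ with $\emptyset\ne \mu\cap\beta$ and $\beta\ne\mu$ in infinitely many points. Fixing $q\in\mu\cap\beta$ on a leaf $\ell$ of $\mu$ and a leaf $m$ of $\beta$: if $\ell=m$ then this whole leaf lies in $\mu\cap\beta$, which is infinite; if $\ell\ne m$ then, being distinct complete geodesics through $q$, they cross transversely there. Because $\mu$ is minimal and is not a simple closed geodesic, its leaves are dense in $\mu$ and $\mu$ has no isolated leaves, so arbitrarily close to $\ell$ there are further leaves of $\mu$ crossing $m$, producing infinitely many points of $\mu\cap\beta$ accumulating at $q$. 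Taking $\beta=h(\epsilon)$ gives $|\mu\cap h(\epsilon)|=\infty$, contradicting $|h(\gamma)\cap h(\epsilon)|=k<\infty$. Hence $h(\gamma)$ is a simple closed geodesic, as desired.

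The main obstacle is the geometric claim in the last step: that an uncountable minimal lamination has no isolated leaves and therefore meets every transverse lamination in an infinite (indeed locally Cantor) set. Making this precise requires the local product structure of geodesic laminations together with the density of leaves supplied by the dichotomy. The remaining ingredients—existence of a crossing curve, finiteness of $\gamma\cap\epsilon$, and preservation of the intersection set under the bijection $h$—are routine, the last being purely set-theoretic.
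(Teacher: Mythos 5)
Your proposal is correct and follows essentially the same route as the paper: reduce via Lemma \ref{lem:minimal_lams_to_minimal_lams} and the minimal-lamination dichotomy to excluding the uncountable case, pick a simple closed geodesic crossing $\gamma$ finitely often, use that a set bijection preserves intersection counts, and derive a contradiction from the density of leaves forcing infinitely many intersections. Your explicit handling of the case where the two leaves through the intersection point coincide is a small point the paper's proof leaves implicit, but the argument is otherwise the same.
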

\begin{proof}
Let $\gamma$ be a simple closed geodesic. Since $\gamma$ is a minimal lamination, $h(\gamma)$ must be a simple closed geodesic or an uncountable collection of non-closed geodesics, each of which is dense in the lamination by Lemma \ref{lem:minimal_lams_to_minimal_lams} and above remarks. Suppose the latter held. Write $h(\gamma) = \bigsqcup_{i \in I} \alpha_i$ as our lamination where $I$ is an uncountable indexing set and each $\alpha_i$ is a non-closed geodesic. Let $\beta$ be a simple closed geodesic on $S$ such that $\beta$ intersects $\gamma$ once or twice (depending on if $\gamma$ is separating), and so $h(\beta)$ is a minimal lamination as well. Let $x \in \gamma \cap \beta$. Say $h(x)$ belongs to some $\alpha_i \subset h(\gamma)$. Then by hypothesis, $h(x)$ can be approximated arbitrarily closely by $\alpha_j$ for any $j \neq i$ (and note $\alpha_j \cap \alpha_i = \emptyset$). Therefore, any geodesic except $\alpha_i$ passing through $h(x)$ intersects the collection of $\alpha_j$ for $j \neq i$ infinitely many times. But $h(\beta)$ contains a geodesic distinct from $\alpha_i$ that passes through $h(x)$ and intersects $h(\gamma)$ finitely many times, which provides a contradiction.
\end{proof}

The main statement concerning geodesic laminations is Proposition \ref{prop:lamination_implies_main} below. We note that Lemma \ref{lem:laminations_implies_simple} with the assumptions of the below proposition already guarantee Theorem \ref{thm:main} to apply; we now provide a quicker, more direct proof of this fact.

\begin{proposition}\label{prop:lamination_implies_main}
Let $S$ be a closed convex projective surface. Suppose $f\colon S \to S$ is a bijection, mapping complete geodesics to complete geodesics as sets, which induces a bijection on the set of these geodesics. Suppose in addition that $f$ sends geodesic laminations to geodesic laminations and induces a bijection on the set of laminations of $S$. Then $f$ maps closed geodesics to closed geodesics and simple closed geodesics to simple closed geodesics (i.e., Theorem \ref{thm:main} holds for $f$).
\end{proposition}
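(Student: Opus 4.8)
The plan is to split the conclusion into its two halves---preservation of simple closed geodesics and preservation of all closed geodesics---and to extract the first half directly from the lamination hypothesis. Since $f$ maps geodesic laminations to geodesic laminations and induces a bijection on the set of laminations, Lemma \ref{lem:laminations_implies_simple} applies verbatim and yields that $f$ sends simple closed geodesics to simple closed geodesics. The same hypotheses hold for $f^{-1}$ (it maps laminations to laminations and induces a bijection on them), so $f^{-1}$ preserves simple closed geodesics as well. This already establishes condition $(c)$ of Theorem \ref{thm:main} for our map, and in fact the stronger statement that $f$ restricts to a bijection of the set of simple closed geodesics of $S$.

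For the second half I would avoid re-running the full equivalence of Theorem \ref{thm:main} and instead argue closed $\to$ closed directly through a single filling collection. First fix a simple filling collection $\cC$, which exists by the pants-decomposition construction used in the proof of $(c) \Rightarrow (a)$ in Section \ref{sec:simple_implies_filling}. Because $f$ preserves simple closed geodesics, $f(\cC)$ is again a finite collection of simple closed geodesics, and the key point is to check that $f(\cC)$ is itself filling. By the characterization in Lemma \ref{lem:characterization_of_filling} it suffices to verify that every simple closed geodesic $\sigma$ meets $f(\cC)$ transversely. Here I would pull back: $f^{-1}(\sigma)$ is a simple closed geodesic, so it meets some $c \in \cC$ transversely since $\cC$ is filling, and pushing forward by the bijection $f$ shows $\sigma = f(f^{-1}(\sigma))$ meets $f(c) \in f(\cC)$. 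This step relies on the elementary fact that a set-bijection of $S$ carrying geodesics to geodesics induces a bijection on the intersection points of any two distinct geodesics, and that for distinct geodesics on $S$ intersection is automatically transverse by uniqueness of the geodesic through a point in a given direction.

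With $f(\cC)$ a filling collection in hand, the conclusion follows from Lemma \ref{lem:characterization_of_closed_by_filling}. Let $\gamma$ be an arbitrary closed geodesic; if $\gamma \in \cC$ we are done by the first half, so assume $\gamma \notin \cC$, whence $\gamma$ meets $\cC$ in finitely many points. Since $f$ matches up intersection points bijectively, $f(\gamma)$ meets $f(\cC)$ in finitely many points, and because $f(\gamma) \notin f(\cC)$, Lemma \ref{lem:characterization_of_closed_by_filling} applied to the filling collection $f(\cC)$ forces $f(\gamma)$ to be closed. Combined with the first half, this gives exactly the two assertions of the proposition.

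I expect the main obstacle to be the verification that $f(\cC)$ is filling without circularity: the tempting shortcut ``$f$ preserves closed geodesics, hence $f(\cC)$ is filling'' is precisely what is not yet available, so the argument must run entirely through simple closed geodesics together with the symmetric use of $f^{-1}$, which is legitimate only because Lemma \ref{lem:laminations_implies_simple} applies equally to $f^{-1}$. The remaining work is bookkeeping---confirming that transversality and finiteness of intersection are preserved under $f$ viewed as a set-bijection on geodesics---and is routine given the uniqueness of geodesics between points on $S$.
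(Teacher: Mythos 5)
Your proposal is correct, but it deliberately takes the longer of the two routes, and it is the route the paper itself flags as an alternative rather than the one it uses. The paper's own proof is three lines: for a simple closed geodesic $\gamma$, Lemma \ref{lem:minimal_lams_to_minimal_lams} shows $f(\gamma)$ is a minimal lamination, hence either a simple closed geodesic or an uncountable disjoint union of non-closed leaves; since $f$ carries a single complete geodesic to a single complete geodesic, the second alternative is impossible, which gives condition $(c)$, and the closed-geodesic half then follows by simply citing the already-established equivalence of Theorem \ref{thm:main}. You instead obtain $(c)$ by quoting Lemma \ref{lem:laminations_implies_simple}, whose proof is the more involved intersection-counting argument that never uses the geodesics-to-geodesics hypothesis; this is exactly the alternative acknowledged in the remark immediately preceding the proposition. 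Your second half then re-derives, inline, the chain $(c) \Rightarrow (a) \Rightarrow (b)$ of Theorem \ref{thm:main}: fix a simple filling collection $\cC$, verify $f(\cC)$ is filling by pulling simple closed geodesics back through $f^{-1}$ and using Lemma \ref{lem:characterization_of_filling}, then conclude with the finiteness criterion of Lemma \ref{lem:characterization_of_closed_by_filling}. This re-derivation is sound --- in fact your verification that $f(\cC)$ is filling (via the observation that distinct geodesics can only meet transversely, and that a set-bijection matches up intersection points) is more explicit than the paper's one-line assertion in its proof of $(a) \Rightarrow (b)$ --- but it is logically unnecessary: Theorem \ref{thm:main} is already proven for any map satisfying the proposition's first set of hypotheses, so once $(c)$ is in hand you may cite it outright. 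In short, the paper's argument buys brevity by playing the lamination dichotomy directly against the hypothesis that $f$ maps single geodesics to single geodesics; yours buys a self-contained and somewhat more careful account at the cost of repeating machinery the paper has already built.
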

\begin{proof}
Let $\gamma$ be a simple closed geodesic. Then $f(\gamma)$ is a minimal lamination; thus, $f(\gamma)$ must be a simple closed geodesic or a disjoint collection of geodesics. The assumption that $f$ maps lines to lines tells us the former must hold. 
\end{proof}

\section{Future directions}\label{sec:future_directions}
Let $S$ be a closed convex projective surface. Suppose $f\colon S \to S$ is a bijection, mapping complete geodesics to complete geodesics as sets, which induces a bijection on the set of geodesics of $S$. A satisfying picture to have would be that our map $f$ is induced by a bijection $\Omega \to \Omega$ that also sends lines to lines. If $f$ is continuous, then we argue now that this is necessarily true: $f$ must be a homeomorphism as $f^{-1}$ is a map with the same properties as $f$ so $f^{-1}$ is continuous as well. Hence, the lifting criterion tells us that we can lift $f$ to a homeomorphism $\tilde{f}: \Omega \to \Omega$ where $\tilde{f}^{-1} = \widetilde{f^{-1}}$. Let $\pi: \Omega \to S$ denote the quotient map. To see that $\tilde{f}$ maps lines to lines, we note that $\pi \circ \tilde{f} = f \circ \pi$ so $\pi = f \circ \pi \circ \widetilde{f^{-1}}$. Since $\pi(\ell)$ is a line in $S$ iff $\ell$ is a line in $\Omega$ and the same holds for $f$ (i.e., for $\ell' \subset S$, $f(\ell')$ is a line iff $\ell'$ is a line), it follows that $\widetilde{f^{-1}}$ preserves lines and hence so does $\tilde{f}$.

What would of course be more interesting is whether the map $f\colon S \to S$ is always continuous; this still remains to be answered. We outline and include a few questions (some of which may have been mentioned above) that might help understand properties of our bijection (if at all). 
\begin{itemize}
\item Does Theorem \ref{thm:main} always hold i.e., does $f$ always map closed geodesics to closed geodesics?
\item Is $f$ continuous on geodesics? Is $f$ continuous on all of $S$?
\item Is $f$ an isometry of the Hilbert metric on $S$?
\item Is $f$ a descent of an isometry $\Omega \to \Omega$ where $\Omega \subset \R\P^2$ is a strictly convex set as the universal cover of $S$?
\end{itemize}
If the answer to the last question is in the affirmative, then this result would parallel the fundamental theorem of projective geometry.

\bibliographystyle{alpha}
\bibliography{ref}

\begin{thebibliography}{AAFM12}

\bibitem[AAFM12]{avidan}
Shiri Artstein-Avidan, Dan Florentin, and Vitali Milman.
\newblock Order {I}somorphisms on {C}onvex {F}unctions in {W}indows.
\newblock 2050:61--122, 2012.

\bibitem[AZ12]{closing_lemma}
DV~Anosov and EV~Zhuzhoma.
\newblock Closing {L}emmas.
\newblock {\em Differential equations}, 48(13):1653--1699, 2012.

\bibitem[Ben04]{benoist}
Yves Benoist.
\newblock Convexes divisibles. {I}.
\newblock In {\em Algebraic groups and arithmetic}, pages 339--374. Tata Inst.
  Fund. Res., Mumbai, 2004.

\bibitem[CME06]{canary2006fundamentals}
Richard~Douglas Canary, Albert Marden, and David~BA Epstein.
\newblock {\em Fundamentals of {H}yperbolic {M}anifolds: {S}elected
  {E}xpositions}, volume 328.
\newblock Cambridge University Press, 2006.

\bibitem[Cra14]{crampon2014geodesic}
Micka{\"e}l Crampon.
\newblock The geodesic flow of {F}insler and {H}ilbert geometries.
\newblock {\em Handbook of Hilbert geometry}, 22:161--206, 2014.

\bibitem[FM11]{primer}
Benson Farb and Dan Margalit.
\newblock {\em A {P}rimer on {M}apping {C}lass {G}roups ({PMS}-49)}.
\newblock Princeton university press, 2011.

\bibitem[Gol90]{goldman_convex_compact}
William~M Goldman.
\newblock Convex real projective structures on compact surfaces.
\newblock {\em Journal of Differential Geometry}, 31(3):791--845, 1990.

\bibitem[HS19]{birman_series_for_convex_projective}
Yi~Huang and Zhe Sun.
\newblock Mcshane identities for {H}igher {T}eichm\"{u}ller theory and the
  {G}oncharov-{S}hen potential.
\newblock {\em arXiv preprint arXiv:1901.02032}, 2019.

\bibitem[Jef00]{jeffers}
Jason Jeffers.
\newblock Lost {T}heorems of {G}eometry.
\newblock {\em Amer. Math. Monthly}, 107(9):800--812, 2000.

\bibitem[Kud20]{kudlinska}
Monika Kudlinska.
\newblock Algorithm for filling curves on surfaces.
\newblock {\em Geometriae Dedicata}, pages 1--11, 2020.

\bibitem[Mar13]{marquis2013around}
Ludovic Marquis.
\newblock Around groups in {H}ilbert geometry.
\newblock {\em arXiv preprint arXiv:1303.7099}, 2013.

\bibitem[Mar16]{martelli}
Bruno Martelli.
\newblock An {I}ntroduction to {G}eometric {T}opology.
\newblock {\em arXiv preprint arXiv:1610.02592}, 2016.

\bibitem[PT14a]{papadopoulos2014funk}
Athanase Papadopoulos and Marc Troyanov.
\newblock From {F}unk to {H}ilbert geometry.
\newblock {\em arXiv preprint arXiv:1406.6983}, 2014.

\bibitem[PT14b]{handbook_hilbert_geometry}
Athanase Papadopoulos and Marc Troyanov, editors.
\newblock {\em Handbook of {H}ilbert geometry}, volume~22 of {\em IRMA Lectures
  in Mathematics and Theoretical Physics}.
\newblock European Mathematical Society (EMS), Z\"{u}rich, 2014.

\bibitem[RAR94]{ratcliffe}
John~G Ratcliffe, S~Axler, and KA~Ribet.
\newblock {\em Foundations of {H}yperbolic {M}anifolds}, volume 149.
\newblock Springer, 1994.

\bibitem[Shi95]{shiffman}
Bernard Shiffman.
\newblock Synthetic {P}rojective {G}eometry and {P}oincar\'{e}'s {T}heorem on
  {A}utomorphisms of the {B}all.
\newblock {\em Enseign. Math. (2)}, 41(3-4):201--215, 1995.

\end{thebibliography}

\end{document}